\documentclass{amsart}
\usepackage{graphicx}
\usepackage[all]{xy}
\usepackage{amscd}
\usepackage{amssymb}
\usepackage{amsmath}
\usepackage{amsthm}
\usepackage{amsfonts}
\usepackage{graphics}
\usepackage{epsfig,psfrag}
\usepackage[english]{babel}
\usepackage{latexsym}
\usepackage{mathrsfs}

\newtheorem{theorem}{Theorem}[section]

\newtheorem{lemma}[theorem]{Lemma}

\newtheorem{proposition}[theorem]{Proposition}
\newtheorem{question}[theorem]{Question}

\theoremstyle{definition}

\newtheorem{definition}[theorem]{Definition}

\theoremstyle{remark}
\newtheorem{remark}[theorem]{Remark}



\begin{document}
\title[Equivariant embeddings of $\Sigma_g$ into $\mathbb{R}^n$ with minimal dimensions]
{Equivariant embeddings of Riemann surfaces in Euclidean spaces with minimal dimensions}

\author{Chao Wang}
\address{School of Mathematical Sciences, Key Laboratory of MEA(Ministry of Education) \& Shanghai Key Laboratory of PMMP, East China Normal University, Shanghai 200241, China}
\email{chao\_{}wang\_{}1987@126.com}

\author{Zhongzi Wang}
\address{School of Mathematical Sciences, Peking University, Beijing 100871, China}
\email{wangzz22@stu.pku.edu.cn}

\subjclass[2020]{Primary 57R40; Secondary 57M12, 57M60}

\keywords{equivariant embedding, equivariant triangulation, hyperbolic triangle group, principal congruence subgroup}

\thanks{The first author is supported by National Natural Science Foundation of China (NSFC), grant Nos. 12131009 and 12371067, and Science and Technology Commission of Shanghai Municipality (STCSM), grant No. 22DZ2229014.}

\begin{abstract}
Let $\Sigma_g$ be a closed Riemann surface of genus $g$. Let $G$ be a finite subgroup of the automorphism group of $\Sigma_g$. It is well known that there exists a smooth $G$-equivariant embedding from $\Sigma_g$ to some Euclidean space $\mathbb{R}^n$. Let $d_g(G)$ be the minimal possible $n$ for $(\Sigma_g,G)$. We compute the value of $d_g(G)$ in certain cases. Especially, we show that: for the automorphism group of the closed Riemann surface which comes from the principal congruence subgroup of level $p$, where $p\geq 7$ is prime, $d_g(G)=p+1$. As a corollary, the minimal $n$ for the Hurwitz action on the Klein quartic is equal to $8$.

Three kinds of methods are used in the computation, which are related to the representations of groups, the equivariant triangulations, and the orbifold theory, respectively. The methods are also used to provide two kinds of upper bounds: $d_g(G)\leq |G|$ if $|G|\geq 5$; and $d_g(G)\leq 12(g-1)$ if $g\geq 2$.
\end{abstract}

\date{}
\maketitle

\tableofcontents


\newpage

\section{Introduction}\label{sec:intro}
In differential topology, a famous theorem of Whitney (see \cite{Wh}) says that any smooth $m$-dimensional manifold can be embedded into $\mathbb{R}^{2m}$. In general, the value $2m$ cannot be improved. For example, a circle cannot lie in $\mathbb{R}^1$, and a given closed nonorientable surface cannot lie in $\mathbb{R}^3$. On the other hand, for some special classes of manifolds, it is possible to make the value smaller. For example, it is known that any smooth $3$-dimensional manifold can be embedded into $\mathbb{R}^5$ (see \cite{Hi,Ro,Wal}). It is in general quite hard to determine the minimal value for specific manifolds so that the embedding exists, and there are rich results in this field.

The Whitney embedding theorem has an equivariant version, which says that if there is a compact Lie group $G$ acting on a smooth manifold $M$, with only finitely many orbit types, then there exists a smooth $G$-equivariant embedding from $M$ to some Euclidean space $\mathbb{R}^n$, where $G$ acts orthogonally on $\mathbb{R}^n$ (see \cite{Mo,Pa}). Notice that when $G$ is finite or $M$ is compact, there are finitely many orbit types, and so the conclusion holds. This equivariant embedding theorem contains no estimation of the value of $n$, but if $G$ is finite and $M$ can be embedded into $\mathbb{R}^k$, then one can pick $n=k|G|$ (see \cite{Wan2}). It is natural to ask what is the minimal value of $n$ for finite $G$-actions on $M$ so that the equivariant embedding exists. In this paper, we will study this question in the first nontrivial case, where $M$ is a connected closed orientable surface and $G$ preserves the orientations on $M$.

Let $\mathrm{O}(n)$ denote the orthogonal group acting on $\mathbb{R}^n$. For our purposes, we state the definition of an equivariant embedding and our main question precisely below. For simplicity, a pair $(M,G)$ is used to denote a $G$-action on a connected compact manifold $M$, which is always required to be faithful.

\begin{definition}\label{def:EquiEmb}
Given $(M,G)$, an embedding $e:M\rightarrow\mathbb{R}^n$ is called {\it $G$-equivariant} if there exists an embedding $\rho: G\rightarrow\mathrm{O}(n)$ such that $e\circ h(x)=\rho(h)\circ e(x)$ for each element $h\in G$ and each point $x\in M$.
\end{definition}

We emphasize that the $G$-action and the embedding $e$ need not be smooth. Let $\Sigma_g$ denote the connected closed orientable surface of genus $g$. It is well known that for such a pair $(\Sigma_g,G)$, there exists a $G$-invariant complex structure on $\Sigma_g$, so that $G$ acts on $\Sigma_g$ by biholomorphic homeomorphisms, and if $g\geq 2$, then there exists a $G$-invariant hyperbolic structure on $\Sigma_g$, so that $G$ acts on $\Sigma_g$ by isometries. Hence we can view $G$ as a subgroup of the automorphism group of a Riemann surface, or the orientation-preserving isometry group of a hyperbolic surface if $g\geq 2$. We will mainly consider the following question.

\begin{question}\label{que:main}
The minimal embedding dimension for $(\Sigma_g,G)$ is defined to be the minimal $n$ so that there exists a smooth $G$-equivariant embedding $e:\Sigma_g\rightarrow\mathbb{R}^n$, and we denote it by $d_g(G)$. Then, what is the exact value of $d_g(G)$?
\end{question}

In \cite{Rü2}, the cyclic $G$-actions on Riemann surfaces $\Sigma_g$ satisfying $d_g(G)=3$ are classified. And it is easy to find other examples of $(\Sigma_g,G)$ so that $d_g(G)=3$. But in general we have not found many results about $d_g(G)$ in the literature yet. First we compute the value of $d_g(G)$ in certain cases and get the following result.

\begin{proposition}\label{thm:Cyclic}
For each integer $n\geq 3$, there exist infinitely many $(\Sigma_g,G)$ such that $d_g(G)=n$, where $G$ is cyclic, and it has order $n$ if $n$ is prime.
\end{proposition}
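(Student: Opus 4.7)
The plan is to exhibit, for each $n\geq 3$, an infinite family of cyclic actions $(\Sigma_g,G)$ with $d_g(G)=n$, taking $|G|=n$ in the prime case. The lower bound will come from a representation-theoretic analysis at fixed points, and the upper bound from either an explicit equivariant embedding or the general inequality $d_g(G)\leq|G|$ for $|G|\geq 5$ stated in the abstract. Specifically, for any equivariant $e\co\Sigma_g\to\mathbb{R}^n$ with $G$ cyclic acting orthogonally, $\mathbb{R}^n$ is a real $G$-representation; at each fixed point $x$ the differential $de_x$ is an equivariant injection of the $2$-dimensional rotation irreducible $T_x\Sigma_g=V_{k_x}$ into $\mathbb{R}^n$, so every such $V_{k_x}$ occurs as a summand, and $e(\Sigma_g^G)\subset(\mathbb{R}^n)^G$ forces a trivial summand whenever there are at least two fixed points.

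For the prime case $n=p\geq 3$, I would take $G=\mathbb{Z}/p$ and, via the standard correspondence between cyclic actions and surjections from orbifold fundamental groups onto $\mathbb{Z}/p$, construct actions whose quotient has genus $g'$ and $p-1$ cone points of order $p$ carrying rotation numbers $1,2,\ldots,p-1$; the surjection exists because the sum $p(p-1)/2$ is divisible by $p$. The fixed-point tangent representations then realise every nontrivial real irreducible $V_1,\ldots,V_{(p-1)/2}$ of $\mathbb{Z}/p$, and the $p-1\geq 2$ fixed points force the trivial representation, so $\mathbb{R}^n\supset\mathbf{1}\oplus V_1\oplus\cdots\oplus V_{(p-1)/2}$ and hence $n\geq p$. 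Riemann--Hurwitz gives $2g-2=p(2g'-2)+(p-1)^2$, so varying $g'$ yields infinitely many values of $g$. The matching upper bound is $d_g(G)\leq|G|=p$ for $p\geq 5$ (from the abstract), and an explicit rotation about an axis in $\mathbb{R}^3$ for $p=3$.

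For composite $n\geq 4$, $|G|$ is no longer constrained to equal $n$, so one chooses a cyclic group and a mixed stabiliser pattern whose tangent data forces real irreducibles of total dimension exactly $n$. A model case is $n=4$ with $G=\mathbb{Z}/4$: a $\mathbb{Z}/4$-action on $\Sigma_g$ (admissible for $g=4g'+2$) with two fixed points of stabiliser $\mathbb{Z}/4$ and two additional points of stabiliser $\mathbb{Z}/2$ has $|\mathrm{Fix}(t)|=2$ and $|\mathrm{Fix}(t^2)|=4$; the only faithful $3$-dimensional real representations of $\mathbb{Z}/4$ are $\mathbf{1}\oplus V_1$ (rotation by $\pi/2$), which imposes $|\mathrm{Fix}(t)|=|\mathrm{Fix}(t^2)|$, and $W\oplus V_1$ (rotoinversion), which imposes $|\mathrm{Fix}(t)|\leq 1$, so both are ruled out and $d_g\geq 4$. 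An explicit invariant embedding via the regular representation of $\mathbb{Z}/4$ supplies the upper bound. Other composite $n$ are handled by analogous bookkeeping tailored to the divisor structure of $n$. The main obstacle is precisely this composite case: the constraints from orbits with smaller stabilisers are weaker (they involve only the restriction of $\mathbb{R}^n$ to the stabiliser, not a forced global summand), so one must argue by a case-by-case analysis ruling out every faithful real representation of $\mathbb{Z}/n$ of dimension less than $n$ via fixed-point counts, and then match the lower bound by a compatible explicit embedding.
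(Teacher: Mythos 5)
Your prime case is sound and runs on essentially the same mechanism as the paper: the eigenvalue condition at fixed points forces all the rotation irreducibles $V_1,\ldots,V_{(p-1)/2}$ of $\mathbb{Z}/p\mathbb{Z}$ to appear, and two or more global fixed points force a trivial summand, giving $d_g(G)\geq p$; this is exactly the paper's Case 1 specialized to $m=p$. Your upper bound via $d_g(G)\leq|G|$ (Theorem~\ref{thm:GEBound}) for $p\geq 5$ is a legitimate and non-circular shortcut; the paper instead writes down an explicit block-diagonal representation in $\mathrm{O}(2t+1)$ and invokes Theorem~\ref{thm:exist}, which is what lets it handle groups whose order exceeds $n$.

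The genuine gap is the composite case, which is most of the content of the proposition. You prove only $n=4$, and for all other composite $n$ you explicitly defer the argument (``analogous bookkeeping'', ``case-by-case analysis ruling out every faithful real representation of $\mathbb{Z}/n$ of dimension less than $n$''), which is precisely the part that has to be done: for $\mathbb{Z}/n\mathbb{Z}$ the eigenvalue condition alone caps the forced dimension well below $n$ in general (there are at most $\varphi(n)/2$ faithful rotation irreducibles), so the lower bound must be extracted from the dimension inequality along the subgroup lattice, and this genuinely depends on the divisor structure of $n$; no such analysis is supplied. The paper avoids this entirely by not insisting on $|G|=n$ when $n$ is composite: for odd $n=2t+1$ it uses $\mathbb{Z}/p\mathbb{Z}$ with a prime $p\geq n$ and $t$ pairs of rotation numbers $k,p-k$ (so $2t$ distinct eigenvalues plus a trivial line), and for even $n=2t+2$ it uses $\mathbb{Z}/3p\mathbb{Z}$ with cone data of orders $3p,3,p,\ldots$ and a parity argument ($\mathrm{dim}\mathrm{Fix}(\rho(\overline{3}))-\mathrm{dim}\mathrm{Fix}(\rho(\overline{1}))$ is positive and even), with matching explicit representations checked against Theorem~\ref{thm:exist}. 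Two smaller points: in your $n=4$ model the Riemann--Hurwitz count gives $g=4g'+1$, not $4g'+2$; and the asserted upper bound ``via the regular representation of $\mathbb{Z}/4$'' cannot be obtained from Theorem~\ref{thm:GEBound} (which needs $|G|\geq 5$) or from Theorem~\ref{thm:exist} (the codimension condition needs $n>4$), so it requires an explicit invariant surface, which the paper builds by hand from $\Sigma_{1,4}\subset\mathbb{R}^3\subset\mathbb{R}^4$ and then caps off equivariantly.
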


The computation usually consists of two parts: on one hand, we get restrictions on the required representation $\rho$, and obtain a lower bound of $d_g(G)$; on the other hand, we find a suitable representation $\rho$, and construct an equivariant embedding realizing the lower bound. So we have certain conditions on the representations to detect whether there exists a smooth $G$-equivariant embedding. By applying these conditions to the regular representation of $G$, we have the result below.

\begin{theorem}\label{thm:GEBound}
Let $\Sigma_g$ be a closed Riemann surface of genus $g$. Let $G$ be a finite subgroup of the automorphism group of $\Sigma_g$. Then $d_g(G)\leq |G|$ if $|G|\geq 5$.
\end{theorem}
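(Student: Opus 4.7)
The plan is to apply the representation-theoretic criterion developed earlier in the paper to the regular representation $\rho_{\mathrm{reg}}:G\hookrightarrow\mathrm{O}(|G|)$ (sending $g$ to the permutation of the standard basis $\{e_h\}_{h\in G}$ of $\mathbb{R}^{|G|}$ given by $e_h\mapsto e_{gh}$) and to verify this representation satisfies the criterion. A convenient preliminary observation is that every smooth $G$-equivariant map $e\co\Sigma_g\to\mathbb{R}^{|G|}$ compatible with $\rho_{\mathrm{reg}}$ is of the form
\[
e(x)\;=\;\sum_{h\in G}f(h^{-1}x)\,e_h
\]
for a unique smooth function $f\in C^\infty(\Sigma_g)$ (its $e_1$-component). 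This identifies the space of equivariant maps with $C^\infty(\Sigma_g)$ itself, leaving ample room for genericity arguments.

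First I would verify the local condition at each isotropy point. At a point $x\in\Sigma_g$ with nontrivial stabilizer $H=G_x$, the subgroup $H$ is cyclic and acts on $T_x\Sigma_g$ by a rotation (since $G$ preserves orientation and an invariant metric). The restriction splits as $\rho_{\mathrm{reg}}|_H\cong[G:H]\cdot\rho^H_{\mathrm{reg}}$, and the regular representation of $H$ contains every irreducible representation of $H$, in particular the required rotation representation on $T_x\Sigma_g$. Moreover $\dim(\mathbb{R}^{|G|})^H=|G|/|H|=|Gx|$, so the orbit of $x$ has exactly enough room inside the $H$-fixed subspace to embed injectively.

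Next I would verify global injectivity and immersion by transversality applied to the function $f$. The equation $e(x)=e(y)$ with $y\notin Gx$ reduces to the $|G|$ scalar conditions $f(hx)=f(hy)$, $h\in G$. As $(x,y)$ varies over the $4$-dimensional open set $\{(x,y)\in\Sigma_g\times\Sigma_g:y\notin Gx\}$, a standard Sard/jet-transversality argument shows that the bad locus of $f$'s has codimension $|G|-4$ in $C^\infty(\Sigma_g)$ and is therefore avoided for generic $f$ precisely when $|G|\geq 5$. Injectivity along each orbit reduces to ensuring that $e(x)$ has trivial $\rho_{\mathrm{reg}}$-stabilizer for generic $x$, which holds once $f$ is generic because the fixed subspaces of nontrivial $\rho_{\mathrm{reg}}(g)$ are all proper. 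The immersion condition at each point is an analogous, easier genericity check.

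The main obstacle is precisely the transversality argument in the third step: the bound $|G|\geq 5$ is exactly the threshold at which the target dimension strictly exceeds $2\dim\Sigma_g=4$, mirroring the classical Whitney embedding bound and being essential to push the generic-position argument through within the equivariant function space. Once that argument is in hand, the construction of the equivariant embedding from a generic $f$ yields $d_g(G)\leq|G|$.
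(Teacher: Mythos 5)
Your strategy is genuinely different from the paper's. The paper also starts from the regular representation, but it verifies three representation-theoretic conditions (codimension condition, dimension inequality, eigenvalue condition) and then invokes Theorem~\ref{thm:exist}, which builds the embedding by embedding the quotient orbifold $\Sigma_g/G$ into $\mathbb{R}^{|G|}/\rho(G)$ and lifting. You instead propose a direct Whitney-style general-position argument inside the space of equivariant maps, using the (correct) identification $e(x)=\sum_h f(h^{-1}x)e_h$ with $f\in C^\infty(\Sigma_g)$. This is a legitimate alternative route, and the threshold $|G|>4$ does emerge from the dimension count for pairs of \emph{regular} points in distinct orbits.

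The genuine gap is your treatment of points with nontrivial stabilizer, which is exactly where the equivariant problem differs from plain Whitney and where the paper's dimension inequality and eigenvalue condition do their work. First, the statement that orbit-injectivity reduces to ``$e(x)$ has trivial $\rho_{\mathrm{reg}}$-stabilizer for generic $x$'' is false at singular points: if $h\in\mathrm{Stab}(x)$ then equivariance forces $\rho_{\mathrm{reg}}(h)e(x)=e(hx)=e(x)$, so the stabilizer of $e(x)$ is never trivial there; what you must show is $\mathrm{Stab}(e(x))=\rho_{\mathrm{reg}}(\mathrm{Stab}(x))$, equivalently that $e$ separates the points of a singular orbit. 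Second, your codimension count $|G|-4$ tacitly assumes the $|G|$ conditions $f(hx)=f(hy)$ are independent, which requires the $2|G|$ points $\{hx\}\cup\{hy\}$ to be pairwise distinct; this fails whenever $x$ or $y$ is singular, since then $\{hx\}_{h\in G}$ has only $|G|/|\mathrm{Stab}(x)|$ distinct elements. Both defects are repairable --- the singular points form a finite set, so the corresponding strata of the double-point locus have dimension $\le 2$ (resp.\ $0$), and one checks that enough independent conditions survive (e.g.\ for $g\notin\mathrm{Stab}(x)$ the equation $e(gx)=e(x)$ imposes $f\circ\sigma=f$ for a fixed-point-free permutation $\sigma$ of the orbit, hence at least $|Gx|/2$ independent conditions) --- but this stratified multijet analysis is the actual content of the proof and is absent. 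Likewise, the immersion condition at a fixed point of $H$ is not just ``an easier genericity check'': $de_x$ is forced to be $H$-equivariant, so injectivity of $de_x$ requires the rotation representation of $H$ on $T_x\Sigma_g$ to occur in $\rho_{\mathrm{reg}}|_H$ (your correct observation, and the analogue of the paper's eigenvalue condition), \emph{and} an argument that a generic $f$ realizes it, e.g.\ that $df_x\neq 0$ and the functionals $df_{h^{-1}x}\circ d(h^{-1})_x$ span $T_x^*\Sigma_g$; the two points must be connected, not just listed.
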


This condition $|G|\geq 5$ is necessary. Proposition~\ref{thm:Cyclic} implies that the equality in Theorem~\ref{thm:GEBound} can hold for certain $G$-actions of prime orders. At present, we do not know exactly when the equality holds. On the other hand, a very famous theorem of Hurwitz says that $|G|\leq 84(g-1)$ if $g\geq 2$ (see \cite{Hu}). Similarly, it is not known exactly when the equality holds. Now Hurwitz's theorem provides an upper bound of $d_g(G)$ in terms of $g$. However, it is far from optimal.

\begin{theorem}\label{thm:SLBound}
Let $\Sigma_g$ be a closed Riemann surface with genus $g\geq 2$. Let $G$ be a subgroup of the automorphism group of $\Sigma_g$. Then $d_g(G)\leq 12(g-1)$.
\end{theorem}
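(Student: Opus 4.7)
The plan is to combine Theorem~\ref{thm:GEBound} with a case analysis based on the quotient orbifold $X = \Sigma_g / G$. First, for the easy range: if $|G| \geq 5$ and $|G| \leq 12(g-1)$, Theorem~\ref{thm:GEBound} gives $d_g(G) \leq |G| \leq 12(g-1)$ directly; if $|G| \leq 4$, then $d_g(G) \leq 3|G| \leq 12 \leq 12(g-1)$ via the elementary construction $\Sigma_g \hookrightarrow \mathbb{R}^3 \otimes \mathbb{R}[G]$ applied to any ordinary embedding $\Sigma_g \hookrightarrow \mathbb{R}^3$ averaged over $G$. So I may assume $|G| > 12(g-1)$.

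Since $g \geq 2$ forces $X$ to be a hyperbolic $2$-orbifold, the assumption rephrases as $|\chi^{\mathrm{orb}}(X)| < 1/6$ via $|\chi^{\mathrm{orb}}(X)| = 2(g-1)/|G|$. Classifying closed hyperbolic $2$-orbifolds with $|\chi^{\mathrm{orb}}| < 1/6$ yields a complete list: $X$ must have underlying sphere with exactly three cone points of signature $(0; p, q, r)$ satisfying $5/6 < 1/p + 1/q + 1/r < 1$; namely $(0; 2,3,r)$ for $r \geq 7$, $(0; 2,4,r)$ for $5 \leq r \leq 11$, $(0; 2,5,r)$ for $5 \leq r \leq 7$, and the sporadic cases $(0; 3,3,4)$ and $(0; 3,3,5)$.

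In each case, let $r$ be the largest cone order and $O \subset \Sigma_g$ the $G$-orbit above the order-$r$ cone point. Since the stabilizer is cyclic of order $r$, $|O| = |G|/r = 2(g-1)/(r \cdot |\chi^{\mathrm{orb}}(X)|)$, and a direct case-by-case computation verifies $r \cdot |\chi^{\mathrm{orb}}(X)| \geq 1/6$ (with equality only in the Hurwitz signature $(0; 2,3,7)$), so $|O| \leq 12(g-1)$ always. In the Hurwitz case, $|O| = 12(g-1) > 2g+2$ guarantees that $G$ acts faithfully on $O$; for the remaining signatures, where $|O|$ may be too small to force faithfulness, one augments $O$ with a further $G$-orbit (e.g., the next largest cone orbit, or a suitable regular orbit) to obtain a $G$-invariant vertex set $V \supseteq O$ of size $|V| \leq 12(g-1)$ on which $G$ acts faithfully, yielding a permutation representation $\rho\colon G \hookrightarrow \mathrm{Sym}(V) \subset \mathrm{O}(|V|)$.

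Given such a $V$, the desired $G$-equivariant embedding $\Sigma_g \hookrightarrow \mathbb{R}^{|V|} \subset \mathbb{R}^{12(g-1)}$ is constructed from the barycentric-coordinate map associated to a $G$-equivariant simplicial triangulation of $\Sigma_g$ whose $0$-skeleton is $V$: send each vertex to the corresponding standard basis vector and extend across each triangle by barycentric coordinates, producing a $G$-equivariant PL embedding into the standard simplex, which is then smoothed $G$-equivariantly. The natural candidate for the triangulation is the Delaunay complex with respect to $V$ in the $G$-invariant hyperbolic metric, which is automatically $G$-equivariant by isometry; a small $G$-equivariant adjustment may be needed to ensure it is a genuine simplicial complex. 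The hard part is the case-by-case verification that, for every signature, one can choose $V$ (with $|V| \leq 12(g-1)$, faithful action, and admitting such a triangulation); the tightest situation is the extremal Hurwitz signature $(0; 2,3,7)$, where $|V| = |O| = 12(g-1)$ leaves no slack for augmentation and the arrangement of the $12(g-1)$ order-$7$ points must itself be shown to admit a $G$-equivariant simplicial triangulation with those as its vertices.
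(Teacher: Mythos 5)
Your reductions are fine as far as they go: the easy range $|G|\leq 12(g-1)$ via Theorem~\ref{thm:GEBound} (and the $k|G|$ trick for $|G|\leq 4$), and the classification of signatures with $5/6<1/p+1/q+1/r<1$, agree with the paper (Proposition~\ref{pro:cases}), as does the arithmetic showing that the orbit $O$ over the largest cone order has $|O|\leq 12(g-1)$, with equality exactly for $(2,3,7)$. But the heart of the theorem is the construction of a smooth equivariant embedding in dimension $\leq 12(g-1)$, and this is precisely where your proposal has a genuine gap: you assume the existence of a $G$-equivariant \emph{simplicial} triangulation of $\Sigma_g$ whose $0$-skeleton is exactly $O$ (or a slightly augmented $V$), and you explicitly defer its verification. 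That existence is not routine. The natural candidate (your Delaunay complex, which is the paper's $\mathcal{T}_{\phi}(R)$ further collapsed to the top-order orbit) can fail to be simplicial: multiple edges and repeated triangles genuinely occur for small quotients (the paper must rule them out case by case in Lemma~\ref{lem:noME} and Propositions~\ref{pro:ETforLA}, \ref{pro:IndET}, and they actually do occur for $(2,4,8)$ and $(2,3,8)$ at $g=2$, handled by ad hoc arguments in Remarks~\ref{rem:TwoE} and \ref{rem:238Act}). A ``small $G$-equivariant adjustment'' cannot repair a double edge or two triangles with the same vertex set without adding vertices, so this step cannot be waved through. Faithfulness of the $G$-action on $O$ alone is also not automatic for small $|O|$, as you note.

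Your fallback of augmenting $O$ by another orbit does not survive the count in several signatures: for $(2,3,r)$ one has $|V_R|+|V_Q|=\frac{(4r+12)(g-1)}{r-6}$, which exceeds $12(g-1)$ for $r=8,9,10$, and similarly $|V_R|+|V_Q|=18(g-1)$ for $(2,4,5)$; adding the order-$2$ orbit or a regular orbit is worse. The paper circumvents exactly this obstruction with Theorem~\ref{thm:tri}: the triangulation is allowed to keep an extra $G$-invariant vertex set $U$ (umbrella centers) that costs nothing in dimension, because $U$-vertices are sent to sums of the basis vectors of their links rather than to new coordinates; injectivity is then guaranteed by the conditions $\mathrm{St}(v)\cap\mathrm{St}(w)=\mathrm{Lk}(v)\cap\mathrm{Lk}(w)$ and $\mathrm{Lk}(v)^0\neq\mathrm{Lk}(w)^0$, checked via Lemma~\ref{lem:Upoint}. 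This is what lets the paper use $V=V_Q\cup V_R$ with $U=V_P$ in the $(2,5,r)$ and most $(2,4,r)$ cases, switch to $V=V_Q$, $U=V_R$ for composite $r\geq 9$ in the $(2,3,r)$ family (with a separate contradiction argument when the link condition fails), and still stay under $12(g-1)$. Your pure barycentric map has no analogous mechanism, so the hard cases concentrate exactly where you have left the proof open; in addition, the equivariant smoothing of the PL embedding near singular points, which occupies a substantial part of the proof of Theorem~\ref{thm:tri} (orbit counts, invariant $2$-planes, eigenvalue considerations), is asserted in one clause rather than argued. As it stands, the proposal reduces the theorem to unproven claims that constitute the main content of Sections~\ref{subsec:ETtoEE}--\ref{subsec:LAandHTG}.
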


According to Theorem~\ref{thm:GEBound}, we need to deal with the case when $|G|>12(g-1)$, which is naturally related to hyperbolic triangle groups and equivariant triangulations. So we have certain techniques to construct a smooth equivariant embedding $\Sigma_g\rightarrow\mathbb{R}^n$, where $n$ is relatively small, from an equivariant triangulation of $\Sigma_g$. We believe that the bound in Theorem~\ref{thm:SLBound} can be improved further, but this will need a deeper understanding of the Hurwitz groups, the ones having order $84(g-1)$. It is actually a main motivation of this study to compute $d_g(G)$ for the first Hurwitz group, which acts on the Klein quartic (see the review MR4419628 of \cite{Wan1} for a similar question on embeddings in $S^n$ raised by B. Zimmermann). The remarkable surface was discovered by Klein when he studied those natural actions of principal congruence subgroups on the upper half-plane. One can find descriptions about it from various viewpoints in the book \cite{Le}, and there are also plentiful results about the Hurwitz groups in the references therein.

Let $N>1$ be an integer. Then the principal congruence subgroup of level $N$ in $\mathrm{PSL}(2,\mathbb{Z})$, denoted by $\overline{\Gamma}(N)$, is defined as the image of $\Gamma(N)$ in $\mathrm{PSL}(2,\mathbb{Z})$, where
\[
\Gamma(N)=\Big\{
\begin{pmatrix}
a & b \\
c & d \\
\end{pmatrix}\in\mathrm{SL}(2,\mathbb{Z}) \,\,\Big|\,
\begin{pmatrix}
a & b \\
c & d \\
\end{pmatrix}\equiv
\begin{pmatrix}
1 & 0 \\
0 & 1 \\
\end{pmatrix}\pmod{N}\Big\}.
\]
This subgroup is normal in $\mathrm{PSL}(2,\mathbb{Z})$. And the quotient $\mathrm{PSL}(2,\mathbb{Z})/\overline{\Gamma}(N)$, denoted by $\overline{\Gamma}_N$, is called the modulary group of level $N$. See \cite[Chapter~IV]{Scho}.

Now, $\overline{\Gamma}(N)$ acts freely on the upper half-plane $\mathbb{U}=\{z\in\mathbb{C}\mid\mathrm{Im}(z)>0\}$, and so $\overline{\Gamma}_N$ acts on the Riemann surface $\mathbb{U}/\overline{\Gamma}(N)$. Topologically, $\mathbb{U}/\overline{\Gamma}(N)$ can be obtained from some $\Sigma_g$ by removing finitely many points, and the $\Sigma_g$ can be chosen to be a certain closed Riemann surface $\Sigma(N)$. Moreover, it is known that $\Sigma(N)$ has genus $g\geq 2$ and has its automorphism group isomorphic to $\overline{\Gamma}_N$ if and only if $N\geq 7$. We will only consider the pair $(\Sigma(p),\overline{\Gamma}_p)$ where $p\geq 7$ is a prime number. Note that in this case $\overline{\Gamma}_p\cong\mathrm{PSL}(2,\mathbb{Z}/p\mathbb{Z})$, and it gives the first Hurwitz group when $p=7$. See Section~\ref{subsec:CCofET} for a more precise description of $(\Sigma(p),\overline{\Gamma}_p)$. Combining the techniques from representations and triangulations, we obtain our main result.

\begin{theorem}\label{thm:PCS}
For a prime number $p\geq 7$, the minimal embedding dimension for the pair $(\Sigma(p),\overline{\Gamma}_p)$ is equal to $p+1$. Especially, the minimal embedding dimension for the Hurwitz action on the Klein quartic is equal to $8$.
\end{theorem}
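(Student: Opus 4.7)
My plan is to establish $d_g(\overline{\Gamma}_p)\geq p+1$ by a representation-theoretic obstruction coming from $C_p$-fixed points, and $d_g(\overline{\Gamma}_p)\leq p+1$ by an explicit construction via an equivariant triangulation; the Klein quartic corollary will follow by taking $p=7$. Suppose first that $e\co\Sigma(p)\to\mathbb{R}^n$ is a smooth $G$-equivariant embedding with $G=\overline{\Gamma}_p$ acting via $\rho\co G\to\mathrm{O}(n)$. The quotient orbifold $\Sigma(p)/G$ has signature $(2,3,p)$, so any Sylow $p$-subgroup $C_p\subset G$ fixes exactly $(p-1)/2$ points on $\Sigma(p)$, which form a single orbit under the cyclic quotient $T:=N_G(C_p)/C_p$ of order $(p-1)/2$ (the split torus in $\mathrm{PSL}(2,\mathbb{F}_p)$), on which $T$ acts freely. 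Their images under $e$ are $(p-1)/2$ distinct points of the $C_p$-fixed subspace $V_0:=(\mathbb{R}^n)^{C_p}$ permuted freely by $T$; hence the orthogonal action $T\curvearrowright V_0$ is faithful, and since $|T|\geq 3$ some character $\chi$ appearing in $V_0\otimes\mathbb{C}$ must satisfy $\chi^2\neq 1$ (otherwise every character would factor through the quotient of $T$ of exponent at most $2$, contradicting faithfulness).

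Let $\psi$ be the character of the Borel $B:=N_G(C_p)$ pulled back from $\chi$ along $B\twoheadrightarrow T$, so $\psi|_{C_p}=1$ and $\psi|_T=\chi$. A nonzero $\chi$-eigenvector in $V_0\otimes\mathbb{C}$ is automatically a $\psi$-eigenvector in $(\rho\otimes\mathbb{C})|_B$, so $\psi$ occurs in $(\rho\otimes\mathbb{C})|_B$. By Frobenius reciprocity, every irreducible constituent of $\mathrm{Ind}_B^G\psi$ appears in $\rho\otimes\mathbb{C}$. Using the Bruhat decomposition $G=B\sqcup BwB$, Mackey's irreducibility criterion applied to $\mathrm{Ind}_B^G\psi$ reduces to $\psi^w|_T\neq\psi|_T$, i.e., $\chi^{-1}\neq\chi$, which follows from $\chi^2\neq 1$. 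Hence $\mathrm{Ind}_B^G\psi$ is irreducible of dimension $[G:B]=p+1$ and its real form embeds into $\rho$, giving $n\geq p+1$.

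For the upper bound, take $\rho$ to be the real form of $\mathrm{Ind}_B^G\psi$, of dimension $p+1$. A character computation yields $\rho|_{C_p}=2\cdot\mathbf{1}+V_1+\cdots+V_{(p-1)/2}$, where $V_k$ denotes the real $2$-dimensional rotation representation of $C_p$ indexed by $\{k,-k\}$; in particular $\dim V_0=2$ and any nonzero $v_p\in V_0$ has $G$-orbit of the correct size $|G|/p=(p^2-1)/2$. Analogous computations show that $(\mathbb{R}^{p+1})^{C_3}$ and $(\mathbb{R}^{p+1})^{C_2}$ have sufficient dimension to accommodate generic points $v_3$ and $v_2$ whose $G$-orbits match the preimages of the order-$3$ and order-$2$ cone points. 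Extending the resulting $G$-equivariant map on the vertex set of the natural $(2,3,p)$-triangulation of $\Sigma(p)$ piecewise-linearly to edges and triangles, and then smoothing $G$-equivariantly, produces the desired embedding.

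The main obstacle is establishing injectivity of the piecewise-linear extension: I need to verify that distinct $G$-translates of a fundamental triangle map to Euclidean triangles in $\mathbb{R}^{p+1}$ with disjoint interiors. For sufficiently generic $(v_p,v_3,v_2)$ this should follow from a transversality argument exploiting the large codimension $p-1$ of the image surface, but a rigorous proof requires a careful analysis of the combinatorial adjacencies in the $(2,3,p)$-tiling together with the compatibility of edge stabilizers with their actions on $\mathbb{R}^{p+1}$. Specializing to $p=7$ with $\overline{\Gamma}_7\cong\mathrm{PSL}(2,\mathbb{Z}/7\mathbb{Z})$ acting on the Klein quartic then yields $d_3(\overline{\Gamma}_7)=8$, giving the stated corollary for the Hurwitz action.
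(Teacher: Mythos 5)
Your lower bound is correct and takes a genuinely different route from the paper. The paper splits into the cases $p\equiv 3$ and $p\equiv 1\pmod 4$ (Propositions~\ref{pro:4k3} and \ref{pro:4k1}): in the first case it counts the $p-1$ distinct forced eigenvalues $\mathrm{e}^{\pm 2\pi(s^\ast)^2\mathrm{i}/p}$ of $\rho(\overline{C}_p)$ plus a $2$-dimensional fixed space, and in the second it rules out the $(p+1)/2$-dimensional constituents by a trace computation. Your argument is uniform in $p$: the $(p-1)/2$ fixed points of $\overline{C}_p$ form a single free orbit of the torus $T=N_G(C_p)/C_p$ of order $(p-1)/2\geq 3$, so $T$ acts faithfully on $\mathrm{Fix}(\rho(C_p))$, some character $\chi$ with $\chi^2\neq 1$ occurs there, and Frobenius reciprocity plus Mackey's criterion force the irreducible principal series $\mathrm{Ind}_B^G\psi$ of dimension $p+1$ to occur in $\rho\otimes\mathbb{C}$. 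This is cleaner and identifies the representation-theoretic reason behind the bound; the paper's case analysis buys explicitness (it pins down exactly which eigenvalues must occur, which it then reuses when verifying the eigenvalue condition for the construction).

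The upper bound, however, contains a genuine gap, which you yourself flag: the injectivity of the piecewise-linear extension. A generic-position or transversality argument does not obviously close it, because once the representation and the three orbit representatives $(v_p,v_3,v_2)$ are chosen, all other vertices and all triangles are forced by equivariance; distinct triangles are $\rho(G)$-translates of one another, so their possible intersections are governed by the interaction of the combinatorics of the $(2,3,p)$-tiling with the specific matrices $\rho(h)$ (for instance, two triangles sharing a vertex of order $p$ are related by a power of $\rho(\overline{C}_p)$, and whether they meet only at that vertex is not a genericity statement about $v_p$). Moreover you have not verified that $\mathrm{Stab}(v_3)$ and $\mathrm{Stab}(v_2)$ are exactly $C_3$ and $C_2$ (the dimension inequality), nor that $\mathrm{codim}\,\mathrm{Fix}(\rho(h))>2$ for all nontrivial $h$ (the codimension condition), both of which are needed to carry out the smoothing and lifting. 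The paper avoids the transversality issue entirely: it builds an explicit combinatorial model $K$ on the vertex set $V_p$ realizing the triangulation $\mathcal{T}_p$ (Proposition~\ref{pro:ETforSp}), proves injectivity of the map to $\mathbb{C}^{p+1}$ by hand using the column structure of $V_p$ (no edges within a column), and then descends to $\mathbb{R}^{p+1}$ by checking the hypotheses of Theorem~\ref{thm:exist} for the real form $\rho_0$, including the codimension estimate via the permutation bound $\dim\mathrm{Fix}(\rho_0(h))\leq l$ (Proposition~\ref{pro:EEofSp}). Some argument of this explicit kind is needed to complete your proof.
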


Actually, we obtain more. We have an explicit representation $\rho:\overline{\Gamma}_p\rightarrow\mathrm{O}(p+1)$ and a $\overline{\Gamma}_p$-equivariant embedding $\hat{e}:\Sigma(p)\rightarrow\mathbb{R}^{p+1}$ with respect to $\rho$, where $\hat{e}(\Sigma(p))$ consists of $p(p^2-1)/6$ triangles and can be regarded as an analogue of the surface of a regular icosahedron (corresponding to the case when $p=5$). Then, a required smooth embedding $e$ can be obtained by modifying $\hat{e}(\Sigma(p))$ equivariantly near the vertices and edges of the triangles. So $e(\Sigma(p))$ lies near $\hat{e}(\Sigma(p))$.

For possible further studies, we mention some directions:

The first one is to consider possible variations of $d_g(G)$. As an example, we can remove the condition ``smooth'' on the embedding and define $\hat{d}_g(G)$. In some cases this new minimal embedding dimension will be strictly smaller than $d_g(G)$. Or we can remove the condition ``orthogonal'' on the $G$-action on $\mathbb{R}^n$, just consider those smooth actions on $\mathbb{R}^n$, and define $\tilde{d}_g(G)$. Usually this new invariant will be harder to determine. On the other hand, one can replace $\mathbb{R}^n$ by the $n$-dimensional sphere $S^n$ and define $d^s_g(G)$, $\hat{d}^s_g(G)$, and $\tilde{d}^s_g(G)$ similarly. In this situation, there are more results in the literature. One can see \cite{WW, WWW, WWZZ} for the cases when $n$ is $3$ or $G$ is cyclic. Note that all the variations have $d_g(G)$ as an upper bound.

Another one is to consider possible ``good'' embeddings $e$. For example, one can consider if $e$ can be an equivariant conformal embedding, or can be an equivariant isometric embedding when $g\geq 2$ and $\Sigma_g$ is regarded as a hyperbolic surface. Note that those embeddings in \cite{Rü2} realizing $d_g(G)=3$ are actually conformal. So the result of \cite{Rü2} generalizes the positive solution to the classical problem which asks if every Riemann surface can be conformally embedded into $\mathbb{R}^3$ (see \cite{Ga, Rü1}). It is then natural to ask whether every smooth equivariant embedding of a Riemann surface in $\mathbb{R}^n$ can be replaced by a conformal one. We believe that the equivariant version of the problem also has a positive solution. And note that it is indeed this case when $|G|>12(g-1)>0$, since then the orbifold $\Sigma_g/G$ is a sphere with three singular points and the involved complex structure on $\Sigma_g$ is essentially unique. So parts of our results can be viewed as the generalizations in high dimensions of the result in \cite{Rü2}. Note that the result in \cite{Rü2} is also generalized to the orientation-reversing case in \cite{Co}, where $n$ is $3$ and $G$ is cyclic.

On the other hand, if one replaces $\mathbb{R}^n$ by $S^n$ as above, then we can ask if $e$ can be an equivariant minimal embedding. In history, symmetries have been used in a lot of constructions of complete embedded minimal surfaces (see \cite{KPS, La, Schw}), and recently there are still discoveries in this way (see \cite{BWW}). We also note that the $G$-equivariant conformal minimal immersions of the open Riemann surfaces in $\mathbb{R}^n$ have been deeply studied in \cite{Fo} recently.

\vspace{\fill}

\noindent {\bf Organization.} In Section~\ref{sec:NCandSC}, we give three conditions on the representations of $G$ to detect the existence of smooth $G$-equivariant embeddings, then we use them to prove Proposition~\ref{thm:Cyclic} and Theorem~\ref{thm:GEBound}. In Section~\ref{sec:ETandPT}, we give techniques to obtain smooth equivariant embeddings from equivariant triangulations, then we use them to prove Theorem~\ref{thm:SLBound}. We prove Theorem~\ref{thm:PCS} in Section~\ref{sec:RSfromPCS} and we give an explicit representation to realize $d_g(G)=p+1$ for $(\Sigma(p),\overline{\Gamma}_p)$ in Appendix~\ref{app:RR}.

Note that the two kinds of methods we used to estimate $d_g(G)$ are explained in Sections~\ref{subsec:ECandDI}--\ref{subsec:CC} and Sections~\ref{subsec:ETtoEE}--\ref{subsec:PTof2O}, respectively. The proofs in Sections~\ref{subsec:UB}, \ref{subsec:LAandHTG}, and Section~\ref{sec:RSfromPCS} are their applications, and can be read separately.

As another tool, the theory of orbifolds will be used throughout this paper. See \cite{BMP} for the theory of orbifolds. Also, one will need some basic properties about the representations of finite groups; see \cite{FH}.


\section{Necessary conditions and sufficient conditions}\label{sec:NCandSC}
Given $(\Sigma_g,G)$, in Section~\ref{subsec:ECandDI} and Section~\ref{subsec:CC}, we will first give some conditions on the representation $\rho: G\rightarrow\mathrm{O}(n)$, such that there exists a smooth $G$-equivariant embedding $e:\Sigma_g\rightarrow\mathbb{R}^n$. Then in Section~\ref{subsec:UB} we will use those conditions to prove Proposition~\ref{thm:Cyclic} and Theorem~\ref{thm:GEBound}, and we will give some related results.


\subsection{Dimension inequality and eigenvalue condition}\label{subsec:ECandDI}
For a point $x$ in $\Sigma_g$ or $\mathbb{R}^n$, we will use $\mathrm{Stab}(x)$ to denote its stabilizer in $G$ or $\rho(G)$, respectively. We call $x$ {\it regular} if $\mathrm{Stab}(x)$ is trivial. Otherwise we call $x$ {\it singular}. For an element $h$ in $G$ or $\rho(G)$, we will use $\mathrm{Fix}(h)$ to denote its fixed point set in $\Sigma_g$ or $\mathbb{R}^n$, which will be a finite set or a linear subspace, respectively. Similarly we can define $\mathrm{Fix}(H)$ for a subgroup $H$ of $G$ or $\rho(G)$. Clearly $\mathrm{Fix}(H)=\bigcap_{h\in H}\mathrm{Fix}(h)$.

\begin{definition}
A representation $\rho: G\rightarrow\mathrm{O}(n)$ satisfies the {\it dimension inequality} if for any point $x$ in $\Sigma_g$ and any subgroup $H$ of $G$ so that $\mathrm{Stab}(x)<H$, we have
\[\mathrm{dim}\mathrm{Fix}(\rho(\mathrm{Stab}(x)))>\mathrm{dim}\mathrm{Fix}(\rho(H)),\]
and when $\mathrm{Fix}(G)\subset\Sigma_g$ has at least two points, we have $\mathrm{dim}\mathrm{Fix}(\rho(G))>0$.
\end{definition}

\begin{definition}
A representation $\rho: G\rightarrow\mathrm{O}(n)$ satisfies the {\it eigenvalue condition} if for any singular point $x$ in $\Sigma_g$ and any element $h$ in $\mathrm{Stab}(x)$, all the eigenvalues of the tangent map $dh_x:T_x\Sigma_g\rightarrow T_x\Sigma_g$ are also eigenvalues of $\rho(h)$.
\end{definition}

\begin{lemma}\label{lem:nec}
If there exists a $G$-equivariant embedding $e:\Sigma_g\rightarrow\mathbb{R}^n$, with respect to an embedding $\rho: G\rightarrow\mathrm{O}(n)$, then $\rho$ satisfies the dimension inequality.

If the above $e$ is also smooth, then $\rho$ also satisfies the eigenvalue condition.
\end{lemma}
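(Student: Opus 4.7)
The plan is to transfer information about stabilizers and fixed sets on $\Sigma_g$ to the ambient $\mathbb{R}^n$ via the equivariance relation $e\circ h=\rho(h)\circ e$, and then to exploit the injectivity of $e$ for the dimension inequality and the injectivity of $de_x$ for the eigenvalue condition. Both parts are then short, essentially formal consequences of $G$-equivariance and the fact that $\rho(G)$ acts linearly on $\mathbb{R}^n$.

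For the dimension inequality, fix $x\in\Sigma_g$ and a subgroup $H$ of $G$ with $\mathrm{Stab}(x)<H$ strictly. For every $h\in\mathrm{Stab}(x)$, equivariance gives $\rho(h)(e(x))=e(h(x))=e(x)$, so $e(x)\in\mathrm{Fix}(\rho(\mathrm{Stab}(x)))$. On the other hand, since $\rho(H)\supset\rho(\mathrm{Stab}(x))$, the inclusion $\mathrm{Fix}(\rho(H))\subset\mathrm{Fix}(\rho(\mathrm{Stab}(x)))$ holds automatically as linear subspaces of $\mathbb{R}^n$. If the two had the same dimension, they would coincide, and then $e(x)$ would lie in $\mathrm{Fix}(\rho(H))$; but then for any $h'\in H$ we would get $e(h'(x))=\rho(h')(e(x))=e(x)$, and the injectivity of $e$ would force $h'(x)=x$, contradicting $H\neq\mathrm{Stab}(x)$. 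For the second clause, if $\mathrm{Fix}(G)\subset\Sigma_g$ contains two distinct points $x_1,x_2$, then $e(x_1)$ and $e(x_2)$ are two distinct points of the linear subspace $\mathrm{Fix}(\rho(G))\subset\mathbb{R}^n$, which therefore cannot be $\{0\}$, so $\dim\mathrm{Fix}(\rho(G))\geq 1$.

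For the eigenvalue condition, I now additionally assume $e$ is smooth. Given a singular $x\in\Sigma_g$ and $h\in\mathrm{Stab}(x)$, differentiating $e\circ h=\rho(h)\circ e$ at $x$, and using that the linear map $\rho(h)$ coincides with its own derivative at $e(x)$, I obtain the commutation $de_x\circ dh_x=\rho(h)\circ de_x$. Because $e$ is an embedding, the map $de_x\co T_x\Sigma_g\to\mathbb{R}^n$ is injective, so its image $V:=de_x(T_x\Sigma_g)$ is a two-dimensional $\rho(h)$-invariant subspace on which $\rho(h)|_V$ is conjugate to $dh_x$ via $de_x$. In particular, after complexification, every eigenvalue of $dh_x$ is an eigenvalue of $\rho(h)|_V$, and hence of $\rho(h)$.

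The argument is essentially bookkeeping. The only subtle point is the step that upgrades the inclusion $\mathrm{Fix}(\rho(H))\subset\mathrm{Fix}(\rho(\mathrm{Stab}(x)))$ to a \emph{strict} inequality of dimensions: this is where the strict containment $\mathrm{Stab}(x)<H$ must be combined with the injectivity of $e$, and I expect this to be the only place requiring any care.
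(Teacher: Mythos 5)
Your proposal is correct and follows essentially the same route as the paper: equivariance places $e(x)$ in $\mathrm{Fix}(\rho(\mathrm{Stab}(x)))$ while injectivity of $e$ keeps it out of $\mathrm{Fix}(\rho(H))$ (you phrase this as a contradiction via coincidence of nested subspaces of equal dimension, the paper exhibits $e(x)$ in the difference directly), and the eigenvalue condition follows by differentiating $e\circ h=\rho(h)\circ e$ at $x$ and using injectivity of $de_x$ to conjugate $dh_x$ into $\rho(h)$ restricted to the invariant plane $de_x(T_x\Sigma_g)$. No gaps.
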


\begin{proof}
Since $e\circ h(x)=\rho(h)\circ e(x)$ for each element $h\in G$ and each point $x\in\Sigma_g$, we see that $h(x)=x$ if and only if $\rho(h)\circ e(x)=e(x)$. For $x\in\Sigma_g$ and $H\leq G$ such that $\mathrm{Stab}(x)<H$, we pick an element $h\in H\setminus\mathrm{Stab}(x)$. Since $\rho(\mathrm{Stab}(x))<\rho(H)$, we have $\mathrm{Fix}(\rho(\mathrm{Stab}(x)))\supseteq\mathrm{Fix}(\rho(H))$. Now $h(x)\neq x$, so $\rho(h)\circ e(x)\neq e(x)$. Hence $e(x)\in\mathrm{Fix}(\rho(\mathrm{Stab}(x)))\setminus\mathrm{Fix}(\rho(H))$, which implies the required inequality.

If $\mathrm{Fix}(G)\neq\emptyset$, then for $x\in\mathrm{Fix}(G)$, we have $\mathrm{Stab}(x)=G$ and $e(x)\in\mathrm{Fix}(\rho(G))$. Hence $\mathrm{dim}\mathrm{Fix}(\rho(G))>0$ when $\mathrm{Fix}(G)$ has at least two points.

Now assume that $e$ is smooth. Let $x$ be a singular point in $\Sigma_g$, and let $h$ be an element in $\mathrm{Stab}(x)$. Since $e\circ h(x)=\rho(h)\circ e(x)$, we have the relation
\[de_{h(x)}\circ dh_x=d\rho(h)_{e(x)}\circ de_x.\]
Then since $h(x)=x$, we have $de_{h(x)}=de_x$, which identifies $T_x\Sigma_g$ with a subspace of $T_{e(x)}\mathbb{R}^n\cong\mathbb{R}^n$. Also, $\rho(h)\circ e(x)=e(x)$, and $de_x(T_x\Sigma_g)$ is an invariant subspace of $d\rho(h)_{e(x)}$. Since $\rho(h)$ is linear, we can identify $d\rho(h)_{e(x)}$ with $\rho(h)$. Then we see that all the eigenvalues of $dh_x$ are also eigenvalues of $\rho(h)$.
\end{proof}

\begin{remark}
By the proof, we see that Lemma~\ref{lem:nec} actually holds for general pairs $(M,G)$. Also, the eigenvalues should be counted with multiplicity. In our case, the group $\mathrm{Stab}(x)$ is always cyclic, so we only need to consider some generator $h$ of it. Then $dh_x$ has a multiple eigenvalue if and only if the eigenvalue is $-1$.
\end{remark}


\subsection{Codimension condition and existence theorem}\label{subsec:CC}

\begin{definition}
A representation $\rho: G\rightarrow\mathrm{O}(n)$ satisfies the {\it codimension condition} if $n>4$ and $\mathrm{codim}\mathrm{Fix}(\rho(h))>2$ for any nontrivial element $h$ in $G$.
\end{definition}

\begin{theorem}\label{thm:exist}
Given $(\Sigma_g,G)$, if there is a representation $\rho: G\rightarrow\mathrm{O}(n)$ satisfying the codimension condition and dimension inequality, then there is a $G$-equivariant embedding $e:\Sigma_g\rightarrow\mathbb{R}^n$ with respect to $\rho$, where $\rho$ is an embedding.

If $\rho$ also satisfies the eigenvalue condition, then the above $e$ can be smooth.
\end{theorem}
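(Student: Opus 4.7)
The plan is to construct the desired map in three successive stages: a continuous $G$-equivariant map $f\colon\Sigma_g\to\mathbb{R}^n$ with respect to $\rho$; an equivariant perturbation of $f$ into a topological embedding $e$; and, when the eigenvalue condition holds, a further modification making $e$ smooth. Note that the codimension condition automatically forces $\rho$ to be an embedding, since any nontrivial $h$ with $\rho(h)=\mathrm{id}$ would give $\mathrm{codim}\,\mathrm{Fix}(\rho(h))=0$.

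For the first stage, I would fix an equivariant triangulation $T$ of $\Sigma_g$, chosen fine enough (e.g.\ by subdivision) that no simplex is reversed by its stabilizer, and compatible with the $2$-orbifold structure on $\Sigma_g/G$. Then I build $f$ inductively on representatives of the $G$-orbits of simplices. For each representative vertex $v$, the dimension inequality lets me pick
\[
f(v)\in\mathrm{Fix}(\rho(\mathrm{Stab}(v)))\setminus\bigcup_{H>\mathrm{Stab}(v)}\mathrm{Fix}(\rho(H)),
\]
and the supplementary clause on $\mathrm{Fix}(G)$ ensures that if $\mathrm{Fix}(G)\subset\Sigma_g$ contains several points their images can be placed at distinct points of $\mathrm{Fix}(\rho(G))$. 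For a representative edge $\sigma$ with stabilizer $H_\sigma$, both endpoint images already lie in the linear subspace $\mathrm{Fix}(\rho(H_\sigma))$ since $H_\sigma$ is contained in each endpoint stabilizer, so I extend $f|_\sigma$ into that subspace and perturb the interior generically to miss the finitely many proper subspaces $\mathrm{Fix}(\rho(H))$ with $H>H_\sigma$. The same scheme works on representative $2$-cells. The rule $f(gx)=\rho(g)f(x)$ then extends $f$ equivariantly to all of $\Sigma_g$; well-definedness is automatic because each representative cell has image inside its own fixed subspace.

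To promote $f$ to a topological embedding I would run an equivariant general position argument. The bound $n>4$ gives double-point codimension at least $n-4\ge 1$, so a generic equivariant perturbation of $f$ supported on fundamental cells removes all self-intersections within the free stratum, while the hypothesis $\mathrm{codim}\,\mathrm{Fix}(\rho(h))>2$ guarantees that distinct translates $\rho(g)f(\sigma)$ and $\rho(g')f(\sigma)$ of a given $2$-cell cannot be forced to collide outside of common incidences. Perturbing on a fundamental domain and spreading by $G$ preserves equivariance, yielding the desired topological embedding $e$.

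For the smooth case, at each singular vertex $v$ the cyclic stabilizer $\mathrm{Stab}(v)=\langle h\rangle$ acts on $T_v\Sigma_g$ as a rotation, whose eigenvalues, by the eigenvalue condition, also occur among the eigenvalues of $\rho(h)$ on $\mathbb{R}^n$; hence there is a $\rho(h)$-invariant $2$-plane $W_v\subset\mathbb{R}^n$ on which the two actions are isomorphic as real $\langle h\rangle$-representations. In a small invariant tubular neighborhood of the $G$-orbit of $v$ I would replace $e$ by an equivariant smooth model whose differential at $f(v)$ is a linear isomorphism $T_v\Sigma_g\to W_v$ intertwining the two actions; away from the singular orbits the $G$-action is free, so equivariant smoothing on the free stratum is standard. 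The step I expect to be the main obstacle is interpolating between these rigid local smooth models near the singular orbits and the smoothed free part without reintroducing self-intersections or breaking equivariance on overlaps; keeping the tubular neighborhoods small and invoking the codimension condition once more to separate the different stabilizer strata of $e(\Sigma_g)$ in $\mathbb{R}^n$ is where the argument requires the most care.
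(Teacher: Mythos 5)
Your overall route --- build an equivariant map cell-by-cell from an equivariant triangulation, put it in equivariant general position, then smooth near the singular orbits --- is genuinely different from the paper's, which never works upstairs directly: it constructs an embedding $\overline{e}\colon\Sigma_g/G\rightarrow\mathbb{R}^n/\rho(G)$ of the quotient orbifold compatible with the holonomy homomorphisms (the condition $\rho\circ\phi=\psi\circ\iota$) and lifts it by the covering-space argument of Lemma~\ref{lem:lift}; smoothness is built in from the start, because near each cone point the image is taken to be the flat cone over a round circle in a $\rho(\phi(\gamma_k))$-invariant $2$-plane through the chosen fixed point (this is exactly where the eigenvalue condition enters), glued to a smooth surface in the free region $\Xi$. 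Your first two stages are sound in outline, but one point must be made explicit: when you perturb a representative $2$-cell $\sigma$, its translates $\rho(g)f(\sigma)$ move in a correlated way, so a naive count of bad perturbations (dimension up to $4+\dim\mathrm{Fix}(\rho(g))$, which can exceed $n$) does not by itself kill collisions between $\sigma$ and its own translates. What rescues the step is the observation you state only implicitly: a coincidence $e(u)=e(hu')$ with $u$, $hu'$ in one $G$-orbit forces $e(u)\in\mathrm{Fix}(\rho(h))$ for some $h\neq 1$, and the codimension condition ($\mathrm{codim}>2$) lets the $2$-dimensional open cells generically avoid all these subspaces, while coincidences between points in different orbits are removed by parametric transversality with bump-function perturbations, where the perturbations really are independent. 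Spelling this out is necessary, since it is where the strict inequality does its work in your argument.

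The genuine gap is the one you flag yourself: the interpolation between the rigid smooth local models at the singular orbits and the smoothed free stratum is not supplied, and it is precisely the content of the second assertion of the theorem. As written you have a PL-type topological embedding plus a wish list for a smooth replacement; nothing in the proposal explains how to match the model plane $W_v$ to the adjacent cells without creating new intersections or breaking equivariance. The paper's device is worth borrowing: by Lemma~\ref{lem:lift} it suffices to make the image in the quotient smooth, and downstairs the equivariance constraint disappears --- one replaces the part of the image inside a small ball around the cone point by the flat cone over a round circle in the invariant $2$-plane, and joins it to the rest by an embedded annulus inside the smooth open manifold $\Xi$, where this is an ordinary codimension-$\geq 3$ smoothing/isotopy problem with no group action. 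Some such reduction (or an explicit equivariant collar argument near each singular orbit) is needed to close your third stage. Finally, note that in the order-two case the matching of tangent actions requires $-1$ to occur among the eigenvalues of $\rho(h)$ with multiplicity at least two, so the eigenvalue condition must be read with multiplicities, as in the remark following Lemma~\ref{lem:nec}.
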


The fact that $\rho$ is an embedding can be derived from the codimension condition or the dimension inequality. So we have $G\cong\rho(G)$. To prove Theorem~\ref{thm:exist}, we will construct an embedding $\overline{e}:\Sigma_g/G\rightarrow\mathbb{R}^n/\rho(G)$ between the orbifolds. The required embedding $e$ will be a lift of $\overline{e}$. We first consider the latter part.

Clearly the regular (resp. singular) points in $\Sigma_g$ map to {\it regular} (resp. {\it singular}) points in $\Sigma_g/G$. Pick a regular point $\star$ in $\Sigma_g$ as the basepoint, and let its image $\overline{\star}$ be the basepoint in $\Sigma_g/G$. By the theory of orbifolds, we have an exact sequence
\[1\rightarrow \pi_1(\Sigma_g,\star)\rightarrow \pi_1(\Sigma_g/G,\overline{\star})\rightarrow G\rightarrow 1,\]
where the map $\phi:\pi_1(\Sigma_g/G,\overline{\star})\rightarrow G$ is defined as follows. For $\alpha\in\pi_1(\Sigma_g/G,\overline{\star})$, let $a$ be a loop based at $\overline{\star}$ representing $\alpha$, which does not meet singular points. There is a unique lift $\widetilde{a}$ of $a$ starting at $\star$ and ending at some point $\star'$. Then there exists a unique $h\in G$ so that $h(\star)=\star'$, and we have $\phi(\alpha)=h$. Similarly, pick a regular point $\ast$ in $\mathbb{R}^n$, which has image $\overline{\ast}$ in $\mathbb{R}^n/\rho(G)$. Then we have an exact sequence
\[1\rightarrow \pi_1(\mathbb{R}^n,\ast)\rightarrow \pi_1(\mathbb{R}^n/\rho(G),\overline{\ast})\rightarrow \rho(G)\rightarrow 1.\]
Now the map $\psi:\pi_1(\mathbb{R}^n/\rho(G),\overline{\ast})\rightarrow \rho(G)$ can be defined in the same way as for $\phi$, because $\mathrm{codim}\mathrm{Fix}(\rho(h))\geq 2$ for any nontrivial element $h$ in $G$.

In this paper, by an {\it embedding} $\overline{e}:\Sigma_g/G\rightarrow\mathbb{R}^n/\rho(G)$, we mean an embedding $\overline{e}$ between the underlying spaces $|\Sigma_g/G|$ and $|\mathbb{R}^n/\rho(G)|$ so that the points $\overline{x}\in\Sigma_g/G$ and $\overline{e}(\overline{x})\in\mathbb{R}^n/\rho(G)$ always have isomorphic local groups. Namely, for $x\in\Sigma_g$ and $y\in\mathbb{R}^n$ that map to $\overline{x}$ and $\overline{e}(\overline{x})$, respectively, we have $\mathrm{Stab}(x)\cong\mathrm{Stab}(y)$. For such an $\overline{e}$ with $\overline{e}(\overline{\star})=\overline{\ast}$, we have a homomorphism $\iota:\pi_1(\Sigma_g/G,\overline{\star})\rightarrow\pi_1(\mathbb{R}^n/\rho(G),\overline{\ast})$ as follows. For $\alpha\in\pi_1(\Sigma_g/G,\overline{\star})$, let $a$ be a loop representing $\alpha$ as before. Then, $\overline{e}\circ a$ is a loop representing some $\beta\in\pi_1(\mathbb{R}^n/\rho(G),\overline{\ast})$, and we have $\iota(\alpha)=\beta$. The above conditions on $\overline{e}$ ensure that the map $\iota$ is well-defined.

\begin{lemma}\label{lem:lift}
If there exists an embedding $\overline{e}:\Sigma_g/G\rightarrow\mathbb{R}^n/\rho(G)$ so that the map $\iota$ satisfies $\rho\circ\phi=\psi\circ\iota$, then $\overline{e}$ has a unique lift $e:\Sigma_g\rightarrow\mathbb{R}^n$ with $e(\star)=\ast$, which is a $G$-equivariant embedding with respect to $\rho$.
\end{lemma}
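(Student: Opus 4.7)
The plan is the standard covering-space lift on the regular parts, followed by a continuous extension across the finitely many singular points, and then a check of $G$-equivariance and injectivity.

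First I would remove the singular sets. Let $S\subset\Sigma_g$ and $T\subset\mathbb{R}^n$ denote the singular sets, with images $B$ and $C$ in the quotients. Because $\bar e$ preserves local groups, $\bar e(B)\subset C$, and the orbit maps restrict to honest regular covering projections $q_\Sigma^\circ:\Sigma_g\setminus S\to\Sigma_g/G\setminus B$ and $q_{\mathbb{R}}^\circ:\mathbb{R}^n\setminus T\to\mathbb{R}^n/\rho(G)\setminus C$ with deck groups $G$ and $\rho(G)$. I would then apply the classical lifting criterion to the composition $f=\bar e\circ q_\Sigma^\circ:\Sigma_g\setminus S\to\mathbb{R}^n/\rho(G)\setminus C$. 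For any loop $\tilde a$ based at $\star$ in $\Sigma_g\setminus S$, the image $a=q_\Sigma\circ\tilde a$ is a loop with $\phi([a])=1$ by the definition of $\phi$, so by hypothesis $\psi(\iota([a]))=\rho(\phi([a]))=1$; hence $f\circ\tilde a=\bar e\circ a$ lifts to a loop in $\mathbb{R}^n\setminus T$ based at $\ast$. The criterion then produces a unique continuous lift $e^\circ:\Sigma_g\setminus S\to\mathbb{R}^n\setminus T$ with $e^\circ(\star)=\ast$.

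Next I would extend $e^\circ$ across each $x\in S$. Writing $\bar x=q_\Sigma(x)$ and $\bar y=\bar e(\bar x)$, the preimage of a sufficiently small neighborhood of $\bar y$ in $\mathbb{R}^n/\rho(G)$ splits into a disjoint union of open sets, one around each point of the finite fibre $q_{\mathbb{R}}^{-1}(\bar y)$. By continuity of $\bar e\circ q_\Sigma$, a sufficiently small punctured disk $U\setminus\{x\}$ around $x$ maps into such a neighborhood, and since $U\setminus\{x\}$ is connected, $e^\circ(U\setminus\{x\})$ lies in a single component, around some specific $y_0\in q_{\mathbb{R}}^{-1}(\bar y)$. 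Setting $e(x)=y_0$ extends $e^\circ$ continuously across $x$; shrinking $U$ forces the image into ever smaller neighborhoods of $y_0$. Doing this at each of the finitely many points of $S$ produces a continuous $e:\Sigma_g\to\mathbb{R}^n$ with $q_{\mathbb{R}}\circ e=\bar e\circ q_\Sigma$ and $e(\star)=\ast$, uniquely determined by uniqueness in the covering lift and in the continuous extension.

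Finally I would verify $G$-equivariance and injectivity. For $h\in G$, both $e\circ h$ and $\rho(h)\circ e$ lift $\bar e\circ q_\Sigma$, using $q_\Sigma\circ h=q_\Sigma$ and $q_{\mathbb{R}}\circ\rho(h)=q_{\mathbb{R}}$. Choosing a loop $a$ in $\Sigma_g/G\setminus B$ with $\phi([a])=h$, its lift in $\Sigma_g$ ends at $h(\star)$; by construction $e(h(\star))$ is the endpoint of the lift of $\bar e\circ a$ based at $\ast$, which equals $\psi(\iota([a]))(\ast)=\rho(h)(\ast)$. So the two lifts agree at $\star$, hence on the connected set $\Sigma_g\setminus S$ by uniqueness of covering lifts, and then on all of $\Sigma_g$ by continuity. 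For injectivity, $e(x_1)=e(x_2)$ first forces $\bar e(q_\Sigma(x_1))=\bar e(q_\Sigma(x_2))$, and since $\bar e$ is injective, $x_2=h(x_1)$ for some $h\in G$; then $\rho(h)$ fixes $e(x_1)$, and using the local group condition $|\mathrm{Stab}(x_1)|=|\mathrm{Stab}(e(x_1))|$ together with the injectivity of $\rho$, a fibre-count gives $h\in\mathrm{Stab}(x_1)$, i.e.\ $x_1=x_2$. The conceptual content is concentrated in the single use of the identity $\rho\circ\phi=\psi\circ\iota$ to verify the lifting criterion; the only mildly delicate step is the continuous extension across $S$, which is straightforward because $S$ is finite and the orbit map $q_{\mathbb{R}}$ separates points of each fibre.
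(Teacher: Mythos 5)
Your proposal is correct, and its overall strategy coincides with the paper's: lift on the regular part by the covering-space criterion (using $\rho\circ\phi=\psi\circ\iota$ to see that loops upstairs map to classes in $\ker\psi$, hence lift to loops), extend continuously over the finitely many singular points by the disjoint-ball/connectedness argument, and deduce equivariance by observing that $e\circ h$ and $\rho(h)\circ e$ are both lifts of $\overline{e}\circ q_\Sigma$ that agree at $\star$, the basepoint check being exactly the identity $\psi(\iota(\alpha))=\rho(\phi(\alpha))=\rho(h)$. The one point where you genuinely diverge is injectivity. The paper proves injectivity of the lift on the regular part first, by a path argument: a coincidence $e'(x)=e'(h(x))$ with $h\neq 1$ produces a loop downstairs whose class maps to $\rho(h)$ under $\psi'\circ\iota'$ while its lift closes up, forcing $\rho(h)=1$, a contradiction; injectivity at the singular points is then obtained from the local-group condition, which shows that the image of the disk $D_x$ is exactly the preimage of $\overline{e}(D)$ in the ball $B_j$. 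You instead establish equivariance first (legitimately, since that step uses no injectivity) and then handle all points uniformly by a stabilizer count: equivariance gives $\rho(\mathrm{Stab}(x))\subseteq\mathrm{Stab}(e(x))$, the local-group condition gives equal finite orders, hence equality, so $\rho(h)\in\mathrm{Stab}(e(x))$ forces $h\in\mathrm{Stab}(x)$ by injectivity of $\rho$ --- which is available here, since the standing hypothesis $\mathrm{codim}\,\mathrm{Fix}(\rho(h))\geq 2$ (needed already to define $\psi$) rules out $\rho(h)=\mathrm{id}$ for $h\neq 1$. Your route is shorter and treats regular and singular points at once; the paper's route buys an explicit local description of $e$ near a singular point (the preimage of $\overline{e}(D)$ inside $B_j$), which is the form in which the lemma is exploited later, e.g.\ when smoothing the embedding in the proof of Theorem~\ref{thm:exist}. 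One cosmetic remark: in the lifting step you work with $\phi$, $\iota$, $\psi$ on the full orbifold groups rather than with their restrictions to the regular parts; this is harmless because, as the paper notes, $\phi'$ and $\psi'$ factor through $\phi$ and $\psi$, but it is worth saying so explicitly, and likewise that the local-group condition is what guarantees $\overline{e}$ carries regular points to regular points, so that $\overline{e}\circ q_\Sigma$ indeed lands in the regular quotient.
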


\begin{proof}
Let $\mathrm{Reg}(\Sigma_g)$ denote the subsurface of $\Sigma_g$ consisting of regular points. Then the map $\mathrm{Reg}(\Sigma_g)\rightarrow\mathrm{Reg}(\Sigma_g)/G$ is a covering map. So we have the exact sequence in the first row of the diagram, where the map $\phi':\pi_1(\mathrm{Reg}(\Sigma_g)/G,\overline{\star})\rightarrow G$ is given in the same way as for $\phi$. So $\phi'$ factors through $\phi$. Similarly we have $\mathrm{Reg}(\mathbb{R}^n)$ and the exact sequence in the second row, where the $\psi':\pi_1(\mathrm{Reg}(\mathbb{R}^n)/\rho(G),\overline{\ast})\rightarrow \rho(G)$ factors through $\psi$. Now $\overline{e}$ induces an embedding $\overline{e}':\mathrm{Reg}(\Sigma_g)/G\rightarrow\mathrm{Reg}(\mathbb{R}^n)/\rho(G)$, so we get the map $\iota'$ in the diagram. Then $\rho\circ\phi=\psi\circ\iota$ implies $\rho\circ\phi'=\psi'\circ\iota'$. So the image of $\pi_1(\mathrm{Reg}(\Sigma_g),\star)$ in $\pi_1(\mathrm{Reg}(\mathbb{R}^n)/\rho(G),\overline{\ast})$ is contained in $\mathrm{ker}(\psi')$, which equals the image of $\pi_1(\mathrm{Reg}(\mathbb{R}^n),\ast)$. Then, by covering space theory, we see that $\overline{e}'$ has a unique lift $e':\mathrm{Reg}(\Sigma_g)\rightarrow\mathrm{Reg}(\mathbb{R}^n)$ so that $e'(\star)=\ast$. \[\xymatrix{
  1 \ar[r] & \pi_1(\mathrm{Reg}(\Sigma_g),\star) \ar[r] & \pi_1(\mathrm{Reg}(\Sigma_g)/G,\overline{\star}) \ar[d]_{\iota'} \ar[r] & G \ar[d]_{\rho} \ar[r] & 1\\
  1 \ar[r] & \pi_1(\mathrm{Reg}(\mathbb{R}^n),\ast) \ar[r] & \pi_1(\mathrm{Reg}(\mathbb{R}^n)/\rho(G),\overline{\ast}) \ar[r] & \rho(G) \ar[r] & 1}\]

The map $e'$ is injective. Otherwise there are distinct points $x$ and $y$ in $\mathrm{Reg}(\Sigma_g)$ such that $e'(x)=e'(y)$. Since $\overline{e}'$ is an embedding, $x$ and $y$ have the same image in $\mathrm{Reg}(\Sigma_g)/G$. So there exists a nontrivial element $h$ in $G$ such that $h(x)=y$. Let $a$ be a path in $\mathrm{Reg}(\Sigma_g)$ from $x$ to $y$, and let $c$ be a path in $\mathrm{Reg}(\Sigma_g)$ from $\star$ to $x$. So $c'=h\circ c^{-1}$ is a path from $y$ to $\star'=h(\star)$, and we get a path $cac'$ from $\star$ to $\star'$. Its image in $\mathrm{Reg}(\Sigma_g)/G$ gives an element $\alpha\in\pi_1(\mathrm{Reg}(\Sigma_g)/G,\overline{\star})$. Then $\phi'(\alpha)=h$. On the other hand, because $e'\circ c$ and $e'\circ h\circ c$ have the same image in $\mathrm{Reg}(\mathbb{R}^n)/\rho(G)$ and $e'(x)=e'(y)$, we have $e'(\star)=e'(\star')=\ast$. So $e'\circ (cac')$ is a loop, and its image in $\mathrm{Reg}(\mathbb{R}^n)/\rho(G)$ gives the element $\iota'(\alpha)\in\pi_1(\mathrm{Reg}(\mathbb{R}^n)/\rho(G),\overline{\ast})$. This means that $\psi'\circ\iota'(\alpha)=\rho\circ\phi'(\alpha)=\rho(h)$ is trivial, which is a contradiction.

Now let $x$ be a singular point in $\Sigma_g$, let $\overline{x}$ be its image in $\Sigma_g/G$, and let $B$ be a sufficiently small $n$-ball centered at $\overline{e}(\overline{x})$. Since $\overline{e}$ is continuous, we can get a small disk $D$ centered at $\overline{x}$ so that $\overline{e}(D)\subseteq B$. Let $y_j$, where $1\leq j\leq s$, be the preimages of $\overline{e}(\overline{x})$ in $\mathbb{R}^n$. Then the preimage of $B$ in $\mathbb{R}^n$ consists of $s$ pairwise disjoint $n$-balls $B_1,\ldots,B_s$ centered at $y_1,\ldots,y_s$, respectively. The preimage of $D$ in $\Sigma_g$ is a union of disjoint disks. We let $D_x$ denote the disk containing $x$. Then $e'(D_x\setminus\{x\})\subseteq B_j$ for some $j$. Since $B$ can be sufficiently small, by sending $x$ to $y_j$ we can obtain an embedding of $D_x$ in $\mathbb{R}^n$. Moreover, the image of $D_x$ in $\mathbb{R}^n$ is exactly the preimage of $\overline{e}(D)$ in $B_j$, because $\overline{x}$ and $\overline{e}(\overline{x})$ have isomorphic local groups. Then we see that the injective map $e'$ extends to an embedding $e:\Sigma_g\rightarrow\mathbb{R}^n$.

By the construction, we see that the required lift $e$ is unique. Note that for any element $h\in G$ the map $\rho(h)^{-1}\circ e\circ h:\Sigma_g\rightarrow\mathbb{R}^n$ is also a lift of $\overline{e}$. To show that $e$ is $G$-equivariant, we only need to check that $\rho(h)^{-1}\circ e\circ h(\star)=\ast$ for every $h\in G$. Let $\star'=h(\star)$, let $\widetilde{a}$ be a path in $\mathrm{Reg}(\Sigma_g)$ from $\star$ to $\star'$, and let $a$ be the image of $\widetilde{a}$ in $\Sigma_g/G$. Then $a$ gives an element $\alpha$ in $\pi_1(\Sigma_g/G,\overline{\star})$, and $\phi(\alpha)=h$. Now $e\circ\widetilde{a}$ is a path in $\mathrm{Reg}(\mathbb{R}^n)$ from $\ast$ to $e(\star')$, and $\overline{e}\circ a$ is the image of $e\circ\widetilde{a}$ in $\mathbb{R}^n/\rho(G)$, which means that $\psi(\iota(\alpha))$ is the element in $\rho(G)$ sending $\ast$ to $e(\star')$. Then, we obtain the required equality, because $\psi(\iota(\alpha))=\rho(\phi(\alpha))=\rho(h)$.
\end{proof}

\begin{remark}\label{rem:lift}
In Lemma~\ref{lem:lift}, we can replace $\mathbb{R}^n$ with a general smooth manifold $\Sigma$, where $\rho$ should embed $G$ into the diffeomorphism group of $\Sigma$, and the singular set of $\Sigma$ should have codimension at least $2$. The proof is the same. In particular, one can choose $\Sigma=\Sigma_g$, which gives the classification theorem about $(\Sigma_g,G)$.
\end{remark}

\begin{proof}[Proof of Theorem~\ref{thm:exist}]
Assume that $|\Sigma_g/G|$ has genus $\bar{g}$, and there exist $l$ singular points with indices $n_1,\ldots,n_l$ in $\Sigma_g/G$, where $\bar{g}, l\geq 0$. Then according to Figure~\ref{fig:GroupP} we have a generating set $\{\alpha_j,\beta_j,\gamma_k\}$ for $\pi_1(\Sigma_g/G,\overline{\star})$, where $1\leq j\leq\bar{g}$, $1\leq k\leq l$.

\begin{figure}[h]
\includegraphics{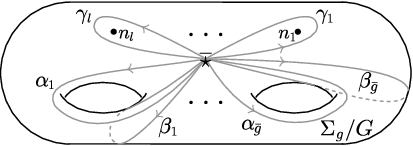}
\caption{The generators $\alpha_j$, $\beta_j$, $\gamma_k$ of $\pi_1(\Sigma_g/G,\overline{\star})$.}\label{fig:GroupP}
\end{figure}

Below we first construct an embedding $\overline{e}:\Sigma_g/G\rightarrow\mathbb{R}^n/\rho(G)$ with $\rho\circ\phi=\psi\circ\iota$. By Lemma~\ref{lem:lift}, this will give us a $G$-equivariant embedding $e$. Then we show that $e$ can be smooth if $\rho$ also satisfies the eigenvalue condition.

Let $a_j$, $b_j$, and $c_k$ denote those oriented circles in Figure~\ref{fig:GroupP} corresponding to $\alpha_j$, $\beta_j$, and $\gamma_k$, respectively. They intersect each other at $\overline{\star}$. Let $\Theta$ be a closed regular neighborhood of their union. It is a surface with $l+1$ boundary components. The complement of the interior of $\Theta$ in $|\Sigma_g/G|$ is a union of $l+1$ disks $D_0,D_1,\ldots,D_l$, where $D_k$ corresponds to $\gamma_k$ for $1\leq k\leq l$. Let $\epsilon>0$ be a small number, and let $\Xi$ be the complement in $\mathbb{R}^n/\rho(G)$ of the closed $\epsilon$-neighborhood of the singular set. It contains $\overline{\ast}$. We first embed $\Theta$ into $\Xi$, then we deal with those disks.

For each $1\leq k\leq l$, we pick a point $P_k$ in $\partial D_k$ and an arc $p_k$ from $\overline{\star}$ to $P_k$ as in Figure~\ref{fig:SurDk}, where an orientation of $\partial D_k$ is also given. So we can get a loop $\hat{c}_k$ going along $p_k\cup\partial D_k$ and representing $\gamma_k$. Note that the lift of $\hat{c}_k$ starting from $\star$ meets exactly one disk component of the preimage of $D_k$ in $\Sigma_g$. Let $D_x$ denote this disk, where $x$ is the singular point in the disk. Then it is not hard to see that $\phi(\gamma_k)$ lies in $\mathrm{Stab}(x)$, and its restriction to $D_x$ is conjugate to a $2\pi/n_k$-rotation.

\begin{figure}[h]
\includegraphics{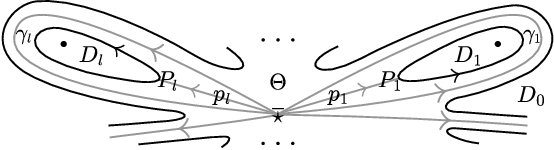}
\caption{The surface $\Theta$ and the disks $D_0,D_1,\ldots,D_l$.}\label{fig:SurDk}
\end{figure}

By the dimension inequality, if $\mathrm{Stab}(x)<G$, then $\mathrm{dim}\mathrm{Fix}(\rho(\mathrm{Stab}(x)))>0$, and we have
$\mathrm{dim}\mathrm{Fix}(\rho(\mathrm{Stab}(x)))>\mathrm{dim}\mathrm{Fix}(\rho(H))$ for any $\mathrm{Stab}(x)<H\leq G$. Then one can find a point $y$ in $\mathbb{R}^n$ so that $\mathrm{Stab}(y)=\rho(\mathrm{Stab}(x))$ and $\|y\|$ is arbitrarily large, where ``$\|\cdot\|$'' denotes the Euclidean norm. If $\mathrm{Stab}(x)=G$ and $\mathrm{Fix}(G)$ has at least two points, then $\mathrm{dim}\mathrm{Fix}(\rho(G))>0$ and we still have a point $y$ as above. If $\mathrm{Fix}(G)$ only contains $x$, then we just choose $y$ to be the origin in $\mathbb{R}^n$.

Let $B_y$ be the closed $2\epsilon$-neighborhood of $y$. Let $\overline{y}\in\mathbb{R}^n/\rho(G)$ be the image of $y$. One can further require that $\rho(h)(B_y)\cap B_y=\emptyset$ for any $h\in G\setminus\mathrm{Stab}(x)$. Then $B_y$ is $\rho(G)$-equivariant, and $B_y/\rho(\mathrm{Stab}(x))$ gives the closed $2\epsilon$-neighborhood of $\overline{y}$. Let $B$ denote this neighborhood. Then, because $\mathrm{codim}\mathrm{Fix}(\rho(h))\geq 2$ for any nontrivial element $h$ in $G$, the preimage of $\partial B\cap\Xi$ in $B_y$ is connected, and we can choose an arc in it from some point $R_k$ to $\rho(\phi(\gamma_k))(R_k)$. Since $n\geq 4$, we can further assume that the image of the arc in $\partial B\cap\Xi$ is a simple closed curve, denoted by $r_k$. Then let $Q_k\in r_k$ be the image of $R_k$, and let $q_k$ be an arc in $\Xi$ from $\overline{\ast}$ to $Q_k$ so that its lift starting from $\ast$ ends at $R_k$. By the construction, if we let $\delta_k$ be the element in $\pi_1(\mathbb{R}^n/\rho(G),\overline{\ast})$ given by a loop going along $q_k\cup r_k$, then $\psi(\delta_k)=\rho(\phi(\gamma_k))$.

Note that for different $1\leq k\leq l$, we will have different $\overline{y}$ and $B$. So we have $\overline{y}_k$ and $B_k$ for each $1\leq k\leq l$. By the above construction, one can require that all $B_k$ are disjoint, they meet each $q_k$ only at $Q_k$, and those $q_k$ meet only at $\overline{\ast}$. Also, one can require that near $\overline{\ast}$ all $q_k$ lie in a disk, as those $p_k$ near $\overline{\star}$. Then, it is not hard to see that there is an embedding $\overline{e}':\Theta\rightarrow\Xi$ sending $p_k\cup\partial D_k$ to $q_k\cup r_k$, sending $a_j$, $b_j$ to circles that correspond to $\rho(\phi(\alpha_j))$ and $\rho(\phi(\beta_j))$ respectively, and having image $\overline{e}'(\Theta)$ disjoint from the interiors of all those $B_k$.

Now we cap off $\overline{e}'(\partial D_k)=r_k$ by a cone in $B_k$ with vertex $\overline{y}_k$ for $1\leq k\leq l$. The codimension condition implies that $\overline{e}'(\partial D_0)$ is nullhomotopic in $\Xi$ and it bounds a disk disjoint from $\overline{e}'(\Theta\setminus\partial D_0)$ and the cones. So we have the required $\overline{e}$.

If $\rho$ also satisfies the eigenvalue condition, then, since in the above construction $\phi(\gamma_k)$ is a $2\pi/n_k$-rotation near $x$, there is a $\rho(\phi(\gamma_k))$-invariant $2$-dimensional plane $\Pi$ with $y\in\Pi$ so that the restriction of $\rho(\phi(\gamma_k))$ to $\Pi$ is also a $2\pi/n_k$-rotation. We note that the circle $\partial B_y\cap\Pi$ lies in the preimage of $\partial B\cap\Xi$. Hence one can choose the arc from $R_k$ to $\rho(\phi(\gamma_k))(R_k)$ to be an arc in $\partial B_y\cap\Pi$ of length $4\epsilon\pi/n_k$. So we see that the cones in those $B_k$ can have smooth preimages. Then, since $n>4$, the part of $\overline{e}(\Sigma_g/G)$ in $\Xi$ can also be smooth. So $e$ can be smooth.
\end{proof}


\subsection{Proofs of Proposition~\ref{thm:Cyclic} and Theorem~\ref{thm:GEBound}}\label{subsec:UB}

\begin{proof}[Proof of Proposition~\ref{thm:Cyclic}]
Given $n\geq 3$, we will construct a required $(\Sigma_g,G)$, where $g$ can be arbitrarily large. By the theory of orbifolds, we only need to construct a suitable epimorphism $\phi:\pi_1(\Sigma_g/G,\overline{\star})\rightarrow G$ that is injective on any finite subgroup of $\pi_1(\Sigma_g/G,\overline{\star})$, since $\pi_1(\Sigma_g,\star)$ is torsion free. We note that according to Figure~\ref{fig:GroupP}, $\pi_1(\Sigma_g/G,\overline{\star})$ has the following presentation, where $[\alpha_j,\beta_j]=\alpha_j\beta_j\alpha_j^{-1}\beta_j^{-1}$,
\[\langle\alpha_1,\beta_1,\ldots,\alpha_{\bar{g}},\beta_{\bar{g}},\gamma_1,\ldots,\gamma_l\mid \prod^{\bar{g}}_{j=1}[\alpha_j,\beta_j]\prod^{l}_{k=1}\gamma_k=1,\gamma_k^{n_k}=1,1\leq k\leq l\rangle.\]
Then since $G$ is cyclic, it suffices to focus on the values of $\phi(\gamma_k)$ and we see that $\bar{g}$ can be arbitrarily large. Let $G=\mathbb{Z}/m\mathbb{Z}$. There are three cases.

Case 1: $n=2t+1$ with $t>1$. Let $m=p$ be a prime number with $p\geq n$ and let $l=2t$. Then $n_k=p$ for $1\leq k\leq 2t$. We can define the $\phi$ so that $\phi(\gamma_{2k-1})=\overline{k}$ and $\phi(\gamma_{2k})=\overline{p-k}$ for $1\leq k\leq t$. This gives a required epimorphism. So one has some pair $(\Sigma_g,G)$. Below we show that $d_g(G)=2t+1$.

Suppose that there exists a $G$-equivariant embedding $e:\Sigma_g\rightarrow\mathbb{R}^s$, with respect to an embedding $\rho: G\rightarrow\mathrm{O}(s)$. Then, by Lemma~\ref{lem:nec}, $\mathrm{dim}\mathrm{Fix}(\rho(G))>0$, and each $\rho(\phi(\gamma_k))$ has the eigenvalues $\mathrm{e}^{\pm2\pi \mathrm{i}/p}$. For $1\leq k\leq t$, we can get $0<r_k<p$ so that $\overline{kr_k}=\overline{1}$. Then $\rho(\overline{1})$ has eigenvalues $\mathrm{e}^{\pm2r_1\pi \mathrm{i}/p},\ldots,\mathrm{e}^{\pm2r_t\pi \mathrm{i}/p}$, which are all different. Since $\mathrm{dim}\mathrm{Fix}(\rho(\overline{1}))\geq 1$, we must have $s\geq 2t+1$.

On the other hand, let $r_1,\ldots,r_t$ be as above. Then define $\rho: G\rightarrow\mathrm{O}(2t+1)$ so that $\rho(\overline{1})$ is the block diagonal matrix with the following blocks
\[
\begin{pmatrix}
  1 \\
\end{pmatrix} ,
\begin{pmatrix}
  \cos\frac{2r_1\pi \mathrm{i}}{p} & -\sin\frac{2r_1\pi \mathrm{i}}{p} \\
  \sin\frac{2r_1\pi \mathrm{i}}{p} & \cos\frac{2r_1\pi \mathrm{i}}{p} \\
\end{pmatrix} ,\ldots,
\begin{pmatrix}
  \cos\frac{2r_t\pi \mathrm{i}}{p} & -\sin\frac{2r_t\pi \mathrm{i}}{p} \\
  \sin\frac{2r_t\pi \mathrm{i}}{p} & \cos\frac{2r_t\pi \mathrm{i}}{p} \\
\end{pmatrix} .
\]
It is not hard to check that $\rho$ satisfies the dimension inequality and the eigenvalue condition. Since $t>1$, $\rho$ also satisfies the codimension condition. By Theorem~\ref{thm:exist}, we see that $d_g(G)\leq 2t+1$. So $d_g(G)=2t+1$.

Case 2: $n=2t+2$ with $t>1$. Let $m=3p$, where $p\geq 3t-1$ is a prime number so that $p\equiv 2\,(\mathrm{mod}\,3)$. Then, let $l=2t-1$, and let $n_1=3p$, $n_2=3$, $n_k=p$, where $3\leq k\leq 2t-1$. One can define the $\phi$ so that $\phi(\gamma_{1})=\overline{1}$, $\phi(\gamma_{2})=\overline{p}$, $\phi(\gamma_{3})=\overline{2p-1}$, $\phi(\gamma_{2k})=\overline{3k}$, and $\phi(\gamma_{2k+1})=\overline{3p-3k}$, where $2\leq k\leq t-1$. It is not hard to check that this gives a required epimorphism. Note that $2p-1\equiv 0\,(\mathrm{mod}\,3)$. So we have some pair $(\Sigma_g,G)$. Below we show that $d_g(G)=2t+2$.

Suppose that there are $e:\Sigma_g\rightarrow\mathbb{R}^s$ and $\rho: G\rightarrow\mathrm{O}(s)$ as above. By Lemma~\ref{lem:nec}, $\mathrm{dim}\mathrm{Fix}(\rho(\overline{3}))>\mathrm{dim}\mathrm{Fix}(\rho(\overline{1}))$, and for $3\leq k\leq 2t-1$, $\rho(\phi(\gamma_k))$ has the eigenvalues $\mathrm{e}^{\pm2\pi \mathrm{i}/p}$. Let $r_k$ be as above for $2\leq k\leq t-1$. Then, $\rho(\overline{3})$ has different eigenvalues $\mathrm{e}^{\pm2\pi \mathrm{i}/p}$, $\mathrm{e}^{\pm6\pi \mathrm{i}/p}$, $\mathrm{e}^{\pm2r_2\pi \mathrm{i}/p},\ldots,\mathrm{e}^{\pm2r_{t-1}\pi \mathrm{i}/p}$ where those first two come from $\gamma_1$ and $\gamma_3$. Since $\mathrm{dim}\mathrm{Fix}(\rho(\overline{3}))-\mathrm{dim}\mathrm{Fix}(\rho(\overline{1}))$ is even, we must have $s\geq 2t+2$.

On the other hand, let $r_2,\ldots,r_{t-1}$ be as above. Then, define $\rho: G\rightarrow\mathrm{O}(2t+2)$ so that $\rho(\overline{1})$ is the block diagonal matrix with the following blocks
\begin{align*}
&\begin{pmatrix}
  \cos\frac{2\pi \mathrm{i}}{3p} & -\sin\frac{2\pi \mathrm{i}}{3p} \\
  \sin\frac{2\pi \mathrm{i}}{3p} & \cos\frac{2\pi \mathrm{i}}{3p} \\
\end{pmatrix} ,
\begin{pmatrix}
  \cos\frac{2\pi \mathrm{i}}{3} & -\sin\frac{2\pi \mathrm{i}}{3} \\
  \sin\frac{2\pi \mathrm{i}}{3} & \cos\frac{2\pi \mathrm{i}}{3} \\
\end{pmatrix} ,
\begin{pmatrix}
  \cos\frac{2\pi \mathrm{i}}{p} & -\sin\frac{2\pi \mathrm{i}}{p} \\
  \sin\frac{2\pi \mathrm{i}}{p} & \cos\frac{2\pi \mathrm{i}}{p} \\
\end{pmatrix} ,\\
&\begin{pmatrix}
  \cos\frac{2r_2\pi \mathrm{i}}{3p} & -\sin\frac{2r_2\pi \mathrm{i}}{3p} \\
  \sin\frac{2r_2\pi \mathrm{i}}{3p} & \cos\frac{2r_2\pi \mathrm{i}}{3p} \\
\end{pmatrix} ,\ldots,
\begin{pmatrix}
  \cos\frac{2r_{t-1}\pi \mathrm{i}}{3p} & -\sin\frac{2r_{t-1}\pi \mathrm{i}}{3p} \\
  \sin\frac{2r_{t-1}\pi \mathrm{i}}{3p} & \cos\frac{2r_{t-1}\pi \mathrm{i}}{3p} \\
\end{pmatrix} .
\end{align*}
Note that $G$ is cyclic. Hence we have $\mathrm{Stab}(x)=\mathrm{Stab}(x')$ for any singular points $x$ and $x'$ in $\Sigma_g$ that lie in the same orbit, and for any element $h$ in $\mathrm{Stab}(x)$, $dh_x$ and $dh_{x'}$ have the same eigenvalues. Then one can check that $\rho$ satisfies the dimension inequality, eigenvalue condition, and codimension condition. And by Theorem~\ref{thm:exist}, we see that $d_g(G)\leq 2t+2$. So $d_g(G)=2t+2$.

Case 3: $n\leq 4$. It is obvious when $n=3$. For $n=4$ we give an explicit example as follows. Let $\Sigma_{1,4}\subset\mathbb{R}^3$ denote the torus with four punctures shown in Figure~\ref{fig:N4}. It is obtained from two annuli in $\mathbb{R}^2\subset\mathbb{R}^3$ by adding four half-twisted bands. Now write $\mathbb{R}^4=\mathbb{R}^3\times\mathbb{R}=\mathbb{R}^2\times\mathbb{R}\times\mathbb{R}$. Then the block diagonal matrix with blocks
\[
\begin{pmatrix}
  0 & -1 \\
  1 & 0 \\
\end{pmatrix} ,
\begin{pmatrix}
  -1 \\
\end{pmatrix} ,
\begin{pmatrix}
  1 \\
\end{pmatrix}
\]
keeps $\Sigma_{1,4}$ invariant. The corresponding orbifold is a thrice-punctured sphere. See the right picture in Figure~\ref{fig:N4}. Since one can cap off $\partial\Sigma_{1,4}$ equivariantly by smooth disks in $\mathbb{R}^4$, we have a pair $(\Sigma_1,G)$ with $G=\mathbb{Z}/4\mathbb{Z}$ and $d_1(G)\leq 4$.

\begin{figure}[h]
\includegraphics{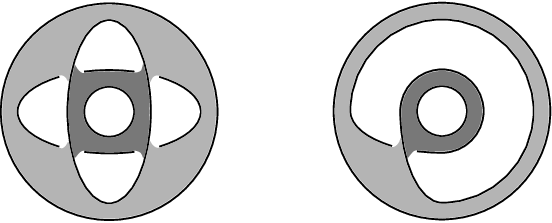}
\caption{The surface $\Sigma_{1,4}$ and the orbifold $\Sigma_{1,4}/G$.}\label{fig:N4}
\end{figure}

For $\Sigma_1/G$, we have $\bar{g}=0$ and $l=3$. We can assume that $(n_1,n_2,n_3)=(4,4,2)$, $\phi(\gamma_1)=\phi(\gamma_2)=\overline{1}$, and $\phi(\gamma_3)=\overline{2}$. Suppose we have $e:\Sigma_1\rightarrow\mathbb{R}^s$ and $\rho: G\rightarrow\mathrm{O}(s)$ as before. Then by Lemma~\ref{lem:nec}, $\mathrm{dim}\mathrm{Fix}(\rho(\overline{2}))>\mathrm{dim}\mathrm{Fix}(\rho(\overline{1}))>0$ and $\rho(\overline{2})$ has $-1$ as a multiple eigenvalue. So $s\geq 4$ and we get $d_1(G)=4$. Clearly, by adding small handles equivariantly, we can get $(\Sigma_g,G)$ with $d_g(G)=4$ for large $g$.

Finally, if $n\neq 3$ is a prime number, then we can choose $m=n$ in Case~1.
\end{proof}

\begin{proof}[Proof of Theorem~\ref{thm:GEBound}]
Let $\rho: G\rightarrow\mathrm{O}(|G|)$ be the regular representation. We need to check that $\rho$ actually satisfies the three conditions in Theorem~\ref{thm:exist}.

Note that $\rho$ is a permutation representation. Each $h$ in $G$ permutes elements of $G$, and the number of orbits is equal to $\mathrm{dim}\mathrm{Fix}(\rho(h))$. If $h$ is a nontrivial element, then no elements of $G$ can be fixed. So each orbit has at least two elements. Since $|G|>4$, we have $\mathrm{dim}\mathrm{Fix}(\rho(h))<|G|-2$. So the codimension condition holds.

For $x\in\Sigma_g$ and $H\leq G$ such that $\mathrm{Stab}(x)<H$, we can pick $h\in H\setminus\mathrm{Stab}(x)$. It is clear that $\mathrm{Fix}(\rho(\mathrm{Stab}(x)))\supseteq\mathrm{Fix}(\rho(H))$. Let $\mathrm{Stab}(x)=\{h_1,\ldots,h_s\}$. Then since elements of $G$ can be viewed as vectors in $\mathbb{R}^{|G|}$, we can define $\eta=\sum_{r=1}^{s}h_r$. So we have $\rho(h)(\eta)=\sum_{r=1}^{s}hh_r\neq\eta$, and $\eta\in\mathrm{Fix}(\rho(\mathrm{Stab}(x)))\setminus\mathrm{Fix}(\rho(H))$. Note that we always have $\mathrm{dim}\mathrm{Fix}(\rho(G))=1$. So the dimension inequality holds.

Now assume that $x\in\Sigma_g$ is a singular point, and $h$ is a generator of $\mathrm{Stab}(x)$. If $h$ has order $s$, then it is easy to see that $\rho(h)$ contains all those $s$-th roots of unity as eigenvalues. Also, note that if $s=2$, then $\rho(h)$ has $-1$ as a multiple eigenvalue, because $|G|\geq 4$. So the eigenvalue condition also holds.
\end{proof}

\begin{remark}\label{rem:smallG}
The inequality in Theorem~\ref{thm:GEBound} does not hold if $|G|\leq 4$. Actually, we have $d_g(G)=3$ if $|G|=2$, by the classification of involutions on $\Sigma_g$. Moreover, we have $(\Sigma_g,G)$ so that $G=\mathbb{Z}/m\mathbb{Z}$ and $d_g(G)=5$, for each $m\geq 3$, as follows.

Assume that for $\Sigma_g/G$ we have $\bar{g}=0$, $l=m$, and $n_k=m$ for $1\leq k\leq l$. We let $\phi$ satisfy $\phi(\gamma_{k})=\overline{1}$ for $1\leq k\leq l$. As before, one can obtain $(\Sigma_g,G)$ corresponding to $\phi$, and if there are $e:\Sigma_g\rightarrow\mathbb{R}^s$ and $\rho: G\rightarrow\mathrm{O}(s)$, then by Lemma~\ref{lem:nec}, $\rho(\overline{1})$ has the eigenvalues $1$ and $\mathrm{e}^{\pm2\pi \mathrm{i}/m}$. Now if $s\leq 4$, then those singular points in $\mathbb{R}^s$ give a subspace of codimension $2$, and we always have $\pi_1(\mathrm{Reg}(\mathbb{R}^s)/\rho(G),\overline{\ast})\cong\mathbb{Z}$. Then, let $D_k$ be as in the proof of Theorem~\ref{thm:exist}, where $1\leq k\leq l$. Since for each singular point $x\in\Sigma_g$, $de_x(T_x\Sigma_g)$ must coincide with the eigenspace for $\mathrm{e}^{\pm2\pi \mathrm{i}/m}$, the image of the oriented $\partial D_k$ in $\mathrm{Reg}(\mathbb{R}^s)/\rho(G)$ gives a generator of $\pi_1(\mathrm{Reg}(\mathbb{R}^s)/\rho(G))$. The generator is independent of $k$, since its image in $\rho(G)$ is $\rho(\overline{1})$. This contradicts the fact that the union of the images of $\partial D_k$ in $\mathrm{Reg}(\mathbb{R}^s)/\rho(G)$ is null-homologous. On the other hand, let $\rho(\overline{1})$ be the block diagonal matrix with the following blocks
\[
\begin{pmatrix}
  1 \\
\end{pmatrix} ,
\begin{pmatrix}
  \cos\frac{2\pi \mathrm{i}}{m} & -\sin\frac{2\pi \mathrm{i}}{m} \\
  \sin\frac{2\pi \mathrm{i}}{m} & \cos\frac{2\pi \mathrm{i}}{m} \\
\end{pmatrix} ,
\begin{pmatrix}
  \cos\frac{2\pi \mathrm{i}}{m} & -\sin\frac{2\pi \mathrm{i}}{m} \\
  \sin\frac{2\pi \mathrm{i}}{m} & \cos\frac{2\pi \mathrm{i}}{m} \\
\end{pmatrix} .
\]
Then we have $\rho: G\rightarrow\mathrm{O}(5)$, and by Theorem~\ref{thm:exist}, we see that $d_g(G)=5$.

It is also easy to see that $d_g(G)\leq 5$ if $|G|=3$, and the inequality is sharp.
\end{remark}


\section{Equivariant triangulations and pseudo-triangulations}\label{sec:ETandPT}
Given $(\Sigma_g,G)$, in Section~\ref{subsec:ETtoEE} and Section~\ref{subsec:PTof2O} we will first show how to obtain a required $\rho: G\rightarrow\mathrm{O}(n)$ from a $G$-equivariant triangulation of $\Sigma_g$ or from a pseudo-triangulation of $\Sigma_g/G$. Then, in Section~\ref{subsec:LAandHTG} we will investigate those large actions and the related hyperbolic triangle groups, and we will prove Theorem~\ref{thm:SLBound}.

\subsection{From equivariant triangulations to equivariant embeddings}\label{subsec:ETtoEE}
Given a simplicial complex $K$, we will use $K^i$ and $|K|$ to denote the $i$-th skeletons and the underlying space of $K$, respectively, where $i\geq 0$. For a vertex $v$ in $K^0$, we will use $\mathrm{St}(v)$ and $\mathrm{Lk}(v)$ to denote the star and link of $v$ in $K$, respectively.

\begin{definition}
A {\it triangulation} of $\Sigma_g$ is a pair $(K,\tau)$, such that $K$ is a simplicial complex and $\tau:|K|\rightarrow\Sigma_g$ is a homeomorphism. It is {\it $G$-equivariant} if $\tau^{-1}h\tau$ gives a simplicial map for each element $h\in G$.
\end{definition}

\begin{theorem}\label{thm:tri}
Given $(\Sigma_g,G)$, if there exists a $G$-equivariant triangulation $(K,\tau)$ of $\Sigma_g$ so that $K^0$ can be written as a disjoint union of two $G$-invariant sets $U$ and $V$, where $\mathrm{St}(v)\cap\mathrm{St}(w)=\mathrm{Lk}(v)\cap\mathrm{Lk}(w)$ and $\mathrm{Lk}(v)^0\neq\mathrm{Lk}(w)^0$ hold for any $v\neq w$ in $U$, then there exists a smooth $G$-equivariant embedding $e:\Sigma_g\rightarrow\mathbb{R}^{|V|}$.
\end{theorem}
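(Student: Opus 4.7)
The plan is to build an explicit PL $G$-equivariant embedding $\Sigma_g\to\mathbb{R}^{|V|}$ from the combinatorics of $(K,\tau)$ and then to smooth it equivariantly. First, since $V$ is $G$-invariant, let $\rho\co G\to\mathrm{O}(|V|)$ be the permutation representation on the standard orthonormal basis $\{e_v\}_{v\in V}$ of $\mathbb{R}^{|V|}$. Set $\mathbf{1}=\sum_{v\in V}e_v$ (a $\rho(G)$-fixed vector) and fix a small constant $\delta>0$. Define $\tilde{e}$ on $K^0$ by $\tilde{e}(v)=e_v$ for $v\in V$ and $\tilde{e}(u)=c_u+\delta\mathbf{1}$ for $u\in U$, where $c_u=|\mathrm{Lk}(u)^0|^{-1}\sum_{w\in\mathrm{Lk}(u)^0}e_w$ is the centroid of the link vertices. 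The star condition $\mathrm{St}(v)\cap\mathrm{St}(w)=\mathrm{Lk}(v)\cap\mathrm{Lk}(w)$ for distinct $v,w\in U$ implies that no triangle of $K$ contains two $U$-vertices; in particular no edge joins two of them, so $\mathrm{Lk}(u)^0\subseteq V$ and $c_u$ is well-defined. Extend $\tilde{e}$ barycentrically to a continuous PL map $\tilde{e}\co|K|\to\mathbb{R}^{|V|}$ and set $\hat{e}=\tilde{e}\circ\tau^{-1}$.

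The map $\hat{e}$ is $G$-equivariant essentially by inspection: the construction uses only the combinatorial data permuted by $G$. For injectivity, the useful global invariant is the scalar $S(p)=\sum_v p_v$: on the image of a triangle with $0$ resp.\ $1$ vertex in $U$, it equals $1$ resp.\ $1+\alpha\delta|V|$, where $\alpha>0$ is the barycentric weight of the $U$-vertex, and this separates the two strata. Within the $V$-only stratum the coordinate support directly identifies the triangle. Within the UVV stratum, matching $S$-values forces equal $\alpha$-weights, and then examining coordinates indexed by the symmetric difference $\mathrm{Lk}(u_1)^0\triangle\mathrm{Lk}(u_2)^0$ forces $\mathrm{Lk}(u_1)^0=\mathrm{Lk}(u_2)^0$, which by hypothesis forces $u_1=u_2$; the remaining coordinates then pin down the two $V$-vertices of the triangle. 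A parallel argument on open edges and on the vertex set shows $\tilde{e}$ is globally injective on $|K|$, hence a PL embedding by compactness.

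Finally, $\hat{e}$ has corners along the image of the $1$-skeleton of $K$. Since $\rho(G)$ acts orthogonally and preserves $\hat{e}(\Sigma_g)$, one rounds these corners in a $G$-invariant tubular neighborhood of the $1$-skeleton image using a $\rho(G)$-averaged bump function, producing the desired smooth $G$-equivariant embedding $e\co\Sigma_g\to\mathbb{R}^{|V|}$. The technical heart of the proof is the injectivity argument for pairs of UVV-triangles sharing neither $U$-vertex: the two hypotheses on $U$-vertex pairs (no common triangle, and distinct link vertex sets) are exactly what is needed to block such coincidences, while the $\delta\mathbf{1}$ shift is what blocks degenerate configurations such as a boundary-of-tetrahedron fragment, in which a VVV-triangle's image would otherwise sit inside the image of a neighbouring UVV-triangle. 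Once the PL embedding is in hand, the equivariant smoothing is standard.
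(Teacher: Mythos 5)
Your PL construction is essentially the paper's: the paper sets $\hat{e}(v_j)=e_j$ for $v_j\in V$ and $\hat{e}(u)=\sum_{v\in\mathrm{Lk}(u)^0}\hat{e}(v)$ for $u\in U$, and separates strata by the fact that a point with positive $U$-weight has coordinate sum $r|\mathrm{Lk}(u)^0|+s+t>1$; your centroid-plus-$\delta\mathbf{1}$ variant and the invariant $S(p)=\sum_v p_v$ play exactly the same role, and your use of the two hypotheses on $U$ (no common triangle, distinct link vertex sets) to rule out coincidences is the same argument. That part is fine (you should also note, as the paper does, that $\rho$ is faithful because a vertex of $U$ is determined by its link, so $\rho$ is an embedding as Definition~\ref{def:EquiEmb} requires).

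The genuine gap is the last step. You dismiss the passage from the PL embedding $\hat{e}$ to a \emph{smooth} equivariant embedding as ``standard corner rounding with a $\rho(G)$-averaged bump function,'' but this is where most of the paper's proof lives, and an averaged bump function does not do it. Away from points with nontrivial stabilizer equivariant smoothing is indeed routine (this is what Lemma~\ref{lem:lift} is used for), but at a point $y$ of $\hat{e}(|K^1|)$ fixed by a nontrivial $h$ the image is a cone (or a pair of flat triangles meeting along an edge), and any invariant smoothing must have a $\rho(h)$-invariant tangent $2$-plane at $y$ on which $\rho(h)$ acts by the same rotation as $dh$ on $\Sigma_g$ (cf. Lemma~\ref{lem:nec}); one must exhibit such a plane $\Pi$ inside $\mathbb{R}^{|V|}$ (this uses the eigenvalues of the permutation matrix $\rho(h)$, e.g.\ that $h$ acts freely on $\mathrm{Lk}(u)^0$ so all $s$-th roots of unity occur, with $-1$ multiple when $s=2$), replace the cone near $y$ by an invariant disk in $\Pi$, and check that no self-intersections are created, which needs the codimension estimates on the singular set of $\rho(\mathrm{Stab}(y))$ obtained by counting orbits on the basis. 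The paper's Cases~1--3 in the proof of Theorem~\ref{thm:tri}, in particular the subcases of a $V$-vertex whose link contains only $k=3$ or $k=4$ vertices of $V$ (where an extra unfixed vertex $w$ must be found, or an equivariant isotopy performed because the projection to $\Pi$ need not be an embedding), show that this step is not formal and uses the specific combinatorics of the triangulation. Your proposal, as written, asserts the conclusion of this analysis without providing it, and also misidentifies the injectivity argument rather than the equivariant smoothing as the technical heart of the theorem.
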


\begin{proof}
We identify $\Sigma_g$ with $|K|$ via $\tau$. By the conditions, it is not hard to see that $|V|\geq 3$. Assume that $V=\{v_1,\ldots,v_n\}$, and let $\{e_1,\ldots,e_n\}$ be the standard basis for $\mathbb{R}^{|V|}$. We can define a map $\hat{e}:\Sigma_g\rightarrow\mathbb{R}^{|V|}$ as follows. First we let $\hat{e}(v_j)=e_j$ for $1\leq j\leq n$. Then let $\hat{e}(u)=\sum\hat{e}(v)$ for each $u\in U$, where the sum is over all those $v\in\mathrm{Lk}(u)^0$. Note that $\mathrm{Lk}(u)^0\subseteq V$ by the conditions. So we have $K^0\rightarrow\mathbb{R}^{|V|}$. The required $\hat{e}$ is then the linear extension of this map.

Clearly $\hat{e}(u)\neq\hat{e}(v)$ for $u\in U, v\in V$. Since $\mathrm{Lk}(v)^0\neq\mathrm{Lk}(w)^0$ for $v\neq w$ in $U$, we also have $\hat{e}(v)\neq\hat{e}(w)$. So $\hat{e}:K^0\rightarrow\mathbb{R}^{|V|}$ is injective. Then $\hat{e}:\sigma\rightarrow\mathbb{R}^{|V|}$ is a linear embedding for any $2$-simplex $\sigma$ in $K$. Suppose that $\sigma\neq\sigma'$, which have vertex sets $\{u,v,w\}\neq\{u',v',w'\}$, but there exists $y\in\hat{e}(\sigma)\cap\hat{e}(\sigma')$. Then $y$ is given by
\[r\hat{e}(u)+s\hat{e}(v)+t\hat{e}(w)=r'\hat{e}(u')+s'\hat{e}(v')+t'\hat{e}(w')\]
for some unique $r,s,t,r',s',t'\geq 0$ satisfying $r+s+t=r'+s'+t'=1$. Note that any $2$-simplex has at most one vertex lying in $U$. If both $\sigma$ and $\sigma'$ contain no such vertices, then, by the vector equation, $\hat{e}(\sigma)$ and $\hat{e}(\sigma')$ meet in a common edge or a common vertex. Otherwise, assume that $u\in U$, then $v,w\in\mathrm{Lk}(u)^0\subseteq V$. If $r\neq 0$, then by the definition of $\hat{e}(u)$, all the vertices in $\mathrm{Lk}(u)^0$ will appear in the left side of the vector equation, and the sum of the coefficients is $r|\mathrm{Lk}(u)^0|+s+t>1$. So, $\{u',v',w'\}\cap U\neq\emptyset$. Assume that $u'\in U$. Then $r'\neq 0$, and by a similar reason we have $\mathrm{Lk}(u')^0=\mathrm{Lk}(u)^0$. So $u'=u$, and $r'=r$. As before, $\hat{e}(\sigma)$ and $\hat{e}(\sigma')$ meet in a common edge or a common vertex. The case when $u\in U$ and $r=0$ will meet the same conclusion. Hence $\hat{e}:\Sigma_g\rightarrow\mathbb{R}^{|V|}$ is an embedding.

Now, we define a representation $\rho: G\rightarrow\mathrm{O}(n)$. Note that $\hat{e}(v_j)=e_j$, $1\leq j\leq n$, give a basis for $\mathbb{R}^{|V|}$ and $V$ is $G$-invariant. So the relations $\rho(h)(\hat{e}(v_j))=\hat{e}(h(v_j))$, $1\leq j\leq n$, provide an element $\rho(h)\in\mathrm{O}(n)$ for each $h\in G$, and hence define the $\rho$. Since $u\in U$ is determined by $\mathrm{Lk}(u)^0\subseteq V$, we see that $\rho$ is an embedding, and $\hat{e}$ is a $G$-equivariant embedding with respect to $\rho$.

Below we also need to show that $\hat{e}$ can be modified to give the required smooth embedding $e$. We can assume that $|V|>4$, otherwise $\Sigma_g$ is a $2$-sphere, and in this case the theorem always holds. Also, note that for each nontrivial element $h$ in $G$, $\rho(h)$ permutes $\{e_1,\ldots,e_n\}$, and the number of orbits is equal to $\mathrm{dim}\mathrm{Fix}(\rho(h))$.

The embedded surface $\hat{e}(\Sigma_g)$ consists of some triangles in $\mathbb{R}^{|V|}$. By Lemma~\ref{lem:lift}, we can assume that the points in $\Sigma_g$ where $\hat{e}$ is not smooth correspond to vertices and edges of these triangles, and if $\hat{e}$ can be modified so that it is smooth near the singular points, then it can be modified to give the required $e$. So we only need to consider the singular points in $\hat{e}(|K^1|)$. Let $y$ be such a point. Then since $\mathrm{Stab}(y)$ is cyclic, we can pick a generator $\rho(h)$ of it, where $h\in G$ is nontrivial.

According to whether $y$ lies in $\hat{e}(U)$ or $\hat{e}(V)$, there are three cases.

Case 1: $y\notin\hat{e}(K^0)$. Then, $\rho(h)$ has order $2$, and $y$ is the midpoint of some edge, whose endpoints must lie in $\hat{e}(V)$. Assume that $\hat{e}(v_1)$ and $\hat{e}(v_2)$ are the endpoints. Let $\hat{e}(u_1)$ and $\hat{e}(u_2)$ be the other two vertices of the triangles containing $y$. If they lie in $\hat{e}(U)$, then one can find $w_1$ and $w_2$ in $V$, so that $w_1\in\mathrm{Lk}(u_1)^0\setminus\mathrm{Lk}(u_2)^0$ and $w_2=h(w_1)$. So we can assume that $\{v_3,v_4\}$ is either $\{u_1,u_2\}$ or $\{w_1,w_2\}$, and we see that $\rho(h)$ interchanges each of $\{\hat{e}(v_1),\hat{e}(v_2)\}$ and $\{\hat{e}(v_3),\hat{e}(v_4)\}$.

Let $\Pi$ be the $2$-dimensional plane containing $y$ and spanned by $e_1-e_2$, $e_3-e_4$. Let $D_y$ be the union of the triangles containing $y$. Then the projection $D_y\rightarrow\Pi$ is an embedding. So the part of $D_y$ near $y$ can be modified in an equivariant way so that it lies in $\Pi$, and the modified $\hat{e}$ can be smooth near $\hat{e}^{-1}(y)$.

Case 2: $y\in\hat{e}(U)$. Assume that $y=\hat{e}(u)$, and $D_y$ is the union of those triangles containing $y$, where $\mathrm{Lk}(u)^0=\{v_1,\ldots,v_k\}\subseteq V$, and along $\partial D_y$ we have $e_1,\ldots,e_k$. Then $k\geq 3$. It is easy to see that the method in Case~1 also works for $k\leq 4$. The plane $\Pi$ is spanned by $e_1-e_2$, $e_2-e_3$ if $k=3$, and by $e_1-e_3$, $e_2-e_4$ if $k=4$. It actually also works for $k\geq 5$, but we prefer another approach as follows.

Let $B_y$ be the closed $2\epsilon$-neighborhood of $y$. Now $\mathrm{Stab}(y)$ acts on $B_y$. Since $h$ is a rotation near $u$, a nontrivial element in $\mathrm{Stab}(y)$ can fix no points in $\{e_1,\ldots,e_k\}$. So the number of the corresponding orbits is at most $k/2$. Then, for $k\geq 5$, we see that the singular set in $B_y$ has dimension at most $n-3$. On the other hand, if we let $s$ be the order of $h$, then $\rho(h)$ has all the $s$-th roots of unity as eigenvalues and has $-1$ as a multiple eigenvalue if $s=2$. Assume that $h$ is a $2\pi/s$-rotation near $u$. Then there is a $\rho(h)$-invariant plane $\Pi$ containing $y$, so that the restriction of $\rho(h)$ to $\Pi$ is also a $2\pi/s$-rotation. Let $B_y'$ be the open $\epsilon$-neighborhood of $y$. Then, as in the proof of Theorem~\ref{thm:exist}, we can replace the part of $D_y$ in $B_y'$ by a disk in $\Pi$, and replace the part of $D_y$ in $B_y\setminus B_y'$ by an embedded annulus, in an equivariant way. So we see that the modified $\hat{e}$ can be smooth near $\hat{e}^{-1}(y)$.

Case 3: $y\in\hat{e}(V)$. Let $y=\hat{e}(v)$ and let $D_y$ be as above. If $\mathrm{Lk}(v)^0\subseteq V$, then we meet the same situation as in Case~2. Assume that $\mathrm{Lk}(v)^0\cap V=\{v_1,\ldots,v_k\}$ and along $\partial D_y$ there are some points in $\hat{e}(U)$ lying between $e_1,\ldots,e_k$, where $k\geq 2$. If $k\geq 5$, then the method in Case~2 also works. If $k=2$, then a similar argument as in Case~1 can be applied. So we only need to consider the remaining cases.

If $k=3$, then $\rho(h)$ has order $3$, and one can assume that $|\mathrm{St}(v)|$ is given by the left picture of Figure~\ref{fig:Star}, where $u_1,u_2,u_3\in U$. If $v_1\notin\mathrm{Lk}(u_1)^0$, then for $j=2,3$, we have $v_j\notin\mathrm{Lk}(u_j)^0$. In this case, the method in Case~1 also works, where the plane $\Pi$ is spanned by $e_1-e_2$, $e_2-e_3$. Otherwise $v_1,v_2,v_3\in\mathrm{Lk}(u_j)^0$ for each $j$. So one can find $w\in V\setminus\{v_1,v_2,v_3\}$, which is not fixed by $h$. This means that the number of $\rho(h)$-orbits in $\{e_1,\ldots,e_n\}$ is at most $n-4$. So the method in Case~2 works.

\begin{figure}[h]
\includegraphics{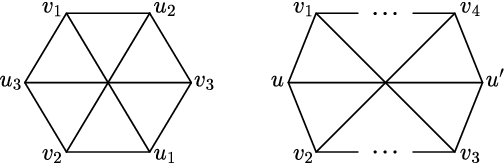}
\caption{The disks $|\mathrm{St}(v)|$ when $k=3$ and $k=4$.}\label{fig:Star}
\end{figure}

If $k=4$, then $\rho(h)$ has order $2$ or $4$. Assume that along $\partial D_y$ one has $\hat{e}(u)$ lying between $e_1$ and $e_2$, where $u\in U$. Then there is $u'\in U$ with $\hat{e}(u')$ lying between $e_3$ and $e_4$. See the right picture of Figure~\ref{fig:Star}. If $v_1,v_2,v_3,v_4\in\mathrm{Lk}(u)^0$, then they lie in $\mathrm{Lk}(u')^0$, and as above, one can find $w\in V\setminus\{v_1,v_2,v_3,v_4\}$ that is not fixed by the element of order $2$ in $\mathrm{Stab}(y)$. So, for this order $2$ element, the number of orbits is at most $n-3$. Then, the method in Case~2 also works. Otherwise, we can assume that no point in $\mathrm{Lk}(v)^0\cap U$ is adjacent to all $v_1,v_2,v_3,v_4$. Now let $\Pi$ be the plane containing $y$ and spanned by $e_1-e_3$, $e_2-e_4$, and consider the map $D_y\rightarrow\Pi$ as in Case~1. Even if it is not an embedding, it will become an embedding if we isotope $D_y$ suitably in an equivariant way. So the part of $D_y$ near $y$ can be moved into $\Pi$, and the modified $\hat{e}$ can be smooth near $\hat{e}^{-1}(y)$, as before.
\end{proof}

\begin{remark}\label{rem:tri}
As a special case, the conditions hold automatically if $U=\emptyset$. So one can get a smooth $G$-equivariant embedding $e:\Sigma_g\rightarrow\mathbb{R}^n$ where $n$ is the number of vertices of the triangulation. On the other hand, if the disks $|\mathrm{St}(u)|$, where $u\in U$, cover $\Sigma_g$, then we say that they give an {\it umbrella tessellation} of the surface $\Sigma_g$, or simply a {\it $U$-tessellation}. In this case, we can get relatively small dimensions. Also, we note that if one does not require $e$ to be smooth, then the result actually holds for general pairs $(M,G)$, where the proof is similar.
\end{remark}


\subsection{Pseudo-triangulations of the $2$-dimensional orbifolds}\label{subsec:PTof2O}

\begin{definition}
Let $\Sigma_g/G$ be the $2$-dimensional orbifolds given by $(\Sigma_g,G)$.

A {\it pseudo-triangle} in $\Sigma_g/G$ is a disk $\Delta\subset|\Sigma_g/G|$ homeomorphic to a triangle in $\mathbb{R}^2$, where corresponding to the edges and vertices of the triangle, it also has three edges and three vertices, and it can meet singular points only in the vertices.

A {\it pseudo-triangulation} of $\Sigma_g/G$ is a collection of finitely many pseudo-triangles in $\Sigma_g/G$ together with all their edges and vertices, denoted by $\mathcal{T}$, so that

(1) the pseudo-triangles in $\mathcal{T}$ cover $|\Sigma_g/G|$;

(2) the intersection of any two pseudo-triangles in $\mathcal{T}$ is either empty or a union of their common edges and common vertices.

A {\it pair of multiple edges} in $\mathcal{T}$ is a pair of edges with the same vertices.
\end{definition}

In Figure~\ref{fig:EofPT}, we show some examples when two pseudo-triangles have nonempty intersections. In each picture, there exist multiple edges.

\begin{figure}[h]
\includegraphics{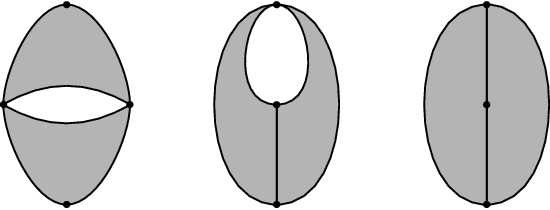}
\caption{Examples of pseudo-triangles with intersections.}\label{fig:EofPT}
\end{figure}

Note that manifolds are orbifolds. So we also have pseudo-triangulations of $\Sigma_g$. Then it is clear that a triangulation of $\Sigma_g$ gives a pseudo-triangulation of $\Sigma_g$. The following lemma shows when a pseudo-triangulation can give a triangulation.

\begin{lemma}\label{lem:PTtoT}
If a pseudo-triangulation of $\Sigma_g$ has no multiple edges, then either it gives a triangulation $(K,\tau)$ of $\Sigma_g$ where the $2$-simplexes correspond to the pseudo-triangles via $\tau$, or it is a pseudo-triangulation of $S^2$ with two pseudo-triangles.
\end{lemma}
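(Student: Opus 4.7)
The hypothesis ``no multiple edges'' is precisely what is needed to prevent the identifications that could force a pseudo-triangulation to fail the simplicial-complex axioms, so my plan is to verify that the combinatorial data of $\mathcal{T}$ assembles into such a complex, with the one exceptional configuration identified by hand.

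First, I would record the internal structure of a single pseudo-triangle. Since $\Delta\in\mathcal{T}$ is a subset of $|\Sigma_g|$ homeomorphic to a standard triangle in $\mathbb{R}^{2}$, its three vertices are pairwise distinct points and its three edges are pairwise distinct arcs of $|\Sigma_g|$, any two of which meet in exactly one vertex of $\Delta$. This takes care of consistency within a single face.

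The heart of the argument is a case analysis of the intersection $\Delta\cap\Delta'$ of two distinct pseudo-triangles, which by the definition of $\mathcal{T}$ is a union of common edges and common vertices; I would split into cases according to the number of shared edges. If $\Delta,\Delta'$ share no edge but share two vertices $v_{1},v_{2}$, then the edges of $\Delta$ and of $\Delta'$ joining $v_{1}$ to $v_{2}$ are distinct, giving a pair of multiple edges and contradicting the hypothesis; so in this case the intersection is at most a single vertex. If $\Delta,\Delta'$ share exactly one edge $e=[v_{1},v_{2}]$ and also share a vertex $v_{3}\notin e$, then the edges from $v_{3}$ to $v_{1}$ in the two triangles are again distinct with the same endpoints, contradiction; so the intersection is exactly $e$ with its endpoints. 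If $\Delta,\Delta'$ share two edges $e_{1},e_{2}$, then in each triangle $e_{1}$ and $e_{2}$ meet in a single vertex of that triangle, and because the set-theoretic intersection $e_{1}\cap e_{2}\subset|\Sigma_g|$ is a single point, that vertex is common to both; it follows that $\Delta$ and $\Delta'$ have the same vertex set $\{v,a,b\}$, and their third edges both have endpoints $\{a,b\}$, so by the hypothesis they must coincide. Hence all three edges are shared, $\partial\Delta=\partial\Delta'$, and $\Delta\cup\Delta'$ is a topological $2$-sphere; since $\Sigma_g$ is closed and connected, $\Sigma_g=S^{2}$, and any further pseudo-triangle $\Delta''\in\mathcal{T}$ would have to be contained in $\Delta$ or $\Delta'$, violating the requirement that $\Delta''\cap\Delta$ (or $\Delta''\cap\Delta'$) be at most $1$-dimensional. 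Thus $\mathcal{T}$ reduces to $\{\Delta,\Delta'\}$ together with their three common edges and vertices, which is the exceptional conclusion.

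Assuming we are outside this exceptional case, I would define $K$ to be the abstract simplicial complex whose $0$-, $1$-, and $2$-simplices correspond to the vertices, edges, and pseudo-triangles of $\mathcal{T}$, with incidences read off directly from $\mathcal{T}$. The case analysis guarantees that the gluing data is consistent, so the characteristic homeomorphisms of the individual pseudo-simplices fit together into a continuous surjection $\tau:|K|\rightarrow\Sigma_g$; injectivity follows from the simplex-intersection properties verified above, and $\tau$ is then a homeomorphism by compactness of $|K|$, with the $2$-simplices corresponding to the pseudo-triangles by construction. The main obstacle is the case analysis of $\Delta\cap\Delta'$, and in particular the ``two shared edges'' subcase where one must use the disk structure of each pseudo-triangle to locate a common vertex and then rule out the final multiple-edge configuration; once that is in hand, the simplicial-complex construction is routine.
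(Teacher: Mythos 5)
Your proposal is correct and follows essentially the same route as the paper: the ``no multiple edges'' hypothesis makes edges determined by their endpoints, and either every pseudo-triangle is likewise determined by its vertex set (yielding the simplicial complex $K$ and the homeomorphism $\tau$) or two pseudo-triangles share all vertices, hence all edges, and their union is $S^2$. Your case analysis of $\Delta\cap\Delta'$ simply spells out in more detail the verification that the paper leaves implicit.
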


\begin{proof}
Let $\mathcal{T}$ be the pseudo-triangulation. Then because $\mathcal{T}$ has no multiple edges, each edge in $\mathcal{T}$ is determined by its two vertices. If every pseudo-triangle in $\mathcal{T}$ can also be determined by its vertices, then one can construct a simplicial complex $K$, whose simplexes correspond to the elements in $\mathcal{T}$. And there is a homeomorphism $\tau:|K|\rightarrow\Sigma_g$ sending the $2$-simplexes to pseudo-triangles. Otherwise there are two pseudo-triangles $\Delta_1$ and $\Delta_2$ in $\mathcal{T}$ with the same vertices. Then they also have the same edges. So $\Sigma_g=\Delta_1\cup\Delta_2$ gives a pseudo-triangulation of $S^2$.
\end{proof}

Now let $\mathcal{T}$ be a pseudo-triangulation of $\Sigma_g/G$. Let $\Delta\in\mathcal{T}$ be a pseudo-triangle. Then, the preimage of $\Delta$ in $\Sigma_g$ is a union of $|G|$ pseudo-triangles, which may meet each other only in the vertices. So it is not hard to see that such pseudo-triangles, together with their edges and vertices, give a pseudo-triangulation of $\Sigma_g$. Since by the theory of orbifolds, the pair $(\Sigma_g,G)$ is determined by $\phi:\pi_1(\Sigma_g/G,\overline{\star})\rightarrow G$, we will use $\mathcal{T}_{\phi}$ to denote this pseudo-triangulation of $\Sigma_g$.

\begin{lemma}\label{lem:EofET}
If $G$ is nontrivial, then $\mathcal{T}_{\phi}$ gives a $G$-equivariant triangulation of $\Sigma_g$ if and only if it has no multiple edges.
\end{lemma}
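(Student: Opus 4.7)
The plan is to prove both directions, with the easy forward direction essentially by definition and the backward direction relying on Lemma~\ref{lem:PTtoT}.

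For the forward direction, I would note that in any simplicial complex the $1$-simplexes are determined by their two vertices, so a triangulation can have no pair of distinct edges with the same endpoints. Thus if $\mathcal{T}_{\phi}$ gives a triangulation (equivariant or not), it has no multiple edges.

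For the backward direction, I would invoke Lemma~\ref{lem:PTtoT}: since $\mathcal{T}_{\phi}$ has no multiple edges, either it gives a triangulation $(K,\tau)$ of $\Sigma_g$ or $\Sigma_g\cong S^2$ and $\mathcal{T}_\phi$ consists of exactly two pseudo-triangles. The main task is to eliminate the second possibility using the hypothesis that $G$ is nontrivial. I would count pseudo-triangles: each $\Delta\in\mathcal{T}$ has exactly $|G|$ preimage pseudo-triangles in $\Sigma_g$ (since the interior of $\Delta$ lies in the regular part of $\Sigma_g/G$ and is simply connected), so $|\mathcal{T}_\phi|=|G|\cdot|\mathcal{T}|$. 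Because the underlying space $|\Sigma_g/G|$ is a closed surface while a single pseudo-triangle is only a disk, we must have $|\mathcal{T}|\geq 2$; combined with $|G|\geq 2$, this gives $|\mathcal{T}_\phi|\geq 4$, ruling out the $S^2$-with-two-triangles case.

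It then remains to verify that the triangulation $(K,\tau)$ obtained from $\mathcal{T}_{\phi}$ is $G$-equivariant. For each $h\in G$, the homeomorphism $h$ of $\Sigma_g$ permutes the pseudo-triangles, edges, and vertices of $\mathcal{T}_{\phi}$ (because $G$ acts on the set of lifts of each element of $\mathcal{T}$). Under the identification $\tau$, these correspond bijectively to the $2$-, $1$-, and $0$-simplexes of $K$, and $\tau^{-1}h\tau$ preserves the containment relations among them since $h$ is a homeomorphism. Hence $\tau^{-1}h\tau$ is a simplicial automorphism of $K$, so $(K,\tau)$ is $G$-equivariant.

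The main obstacle is the ruling out of the exceptional case in Lemma~\ref{lem:PTtoT}, which is where the hypothesis ``$G$ nontrivial'' is used; the rest is essentially bookkeeping. One subtlety to mention is the verification that each pseudo-triangle in $\mathcal{T}$ lifts to exactly $|G|$ pseudo-triangles in $\mathcal{T}_{\phi}$, which follows from covering space theory applied to $\mathrm{Reg}(\Sigma_g)\rightarrow\mathrm{Reg}(\Sigma_g/G)$ together with the fact that a closed disk with three boundary points removed is simply connected.
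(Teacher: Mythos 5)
Your forward direction and your elimination of the exceptional case of Lemma~\ref{lem:PTtoT} (counting that $\mathcal{T}_{\phi}$ has $|G|\cdot|\mathcal{T}|\geq 4$ pseudo-triangles when $G$ is nontrivial) agree with the paper. The gap is in your final equivariance step. The definition of a $G$-equivariant triangulation requires $\tau^{-1}h\tau$ to be a simplicial map of $|K|$, i.e.\ affine on each simplex, not merely a bijection of the simplexes preserving incidence. What you verify --- that $h$ permutes the vertices, edges and pseudo-triangles of $\mathcal{T}_{\phi}$ and preserves containments --- only shows that $h$ induces a combinatorial automorphism of $K$. The actual map $\tau^{-1}h\tau:|K|\rightarrow|K|$ need not coincide with the linear realization of that automorphism, because the homeomorphism $\tau$ furnished by Lemma~\ref{lem:PTtoT} is chosen on each pseudo-triangle with no compatibility between a pseudo-triangle and its $G$-translates; so with that particular $\tau$ there is no reason for $\tau^{-1}h\tau$ to be simplicial. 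This strict linearity is not a pedantic point: it is exactly what is used later, e.g.\ in the proof of Theorem~\ref{thm:tri}, where $\hat{e}$ is defined by linear extension from the vertices and its equivariance with respect to the permutation representation requires the $G$-action on $|K|$ to be affine on simplexes.

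This is precisely what the second half of the paper's proof supplies: it shows that $(K,\tau)$ \emph{can be made} $G$-equivariant by adjusting $\tau$. One first lets $G$ act on $|K|$ honestly simplicially, by linearly extending the induced permutation of the vertex set, obtaining a pair $(|K|,G)$; then one must produce a homeomorphism $\tau:|K|\rightarrow\Sigma_g$ conjugating $(|K|,G)$ to $(\Sigma_g,G)$. This is done by constructing an embedding $\overline{\tau}:|K|/G\rightarrow\Sigma_g/G$ that sends the images of the simplexes to the corresponding elements of $\mathcal{T}$, and then lifting it equivariantly via Lemma~\ref{lem:lift} and Remark~\ref{rem:lift}. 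Your proposal omits this straightening/conjugation step, which is the real content of the backward direction; the rest of your argument matches the paper.
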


\begin{proof}
The ``only if'' part is clear. Assume that $\mathcal{T}_{\phi}$ has no multiple edges. Since $G$ is nontrivial, $\mathcal{T}_{\phi}$ has more than two pseudo-triangles. So by Lemma~\ref{lem:PTtoT}, $\mathcal{T}_{\phi}$ gives a triangulation $(K,\tau)$ of $\Sigma_g$. We need to show that $(K,\tau)$ can be $G$-equivariant.

Since $\mathcal{T}_{\phi}$ comes from $\mathcal{T}$ and the map $\Sigma_g\rightarrow\Sigma_g/G$, we see that $G$ acts on the set of vertices in $\mathcal{T}_{\phi}$. So $G$ acts on $|K|$ via simplicial maps. This gives a pair $(|K|,G)$. Then we need to find $\tau:|K|\rightarrow\Sigma_g$ so that it gives a conjugation between $(|K|,G)$ and $(\Sigma_g,G)$. By Lemma~\ref{lem:lift} and Remark~\ref{rem:lift}, one only needs to find some suitable embedding $\overline{\tau}:|K|/G\rightarrow\Sigma_g/G$. Such $\overline{\tau}$ can be constructed by mapping the images of the simplexes in $|K|/G$ to the corresponding elements in $\mathcal{T}$.
\end{proof}

In what follows, we will assume that $\mathcal{T}$ has no multiple edges, which is the case we need later. Now, if $\mathcal{T}_{\phi}$ has a pair of multiple edges, then they map to the same edge in $\mathcal{T}$. For an edge $\eta$ in $\mathcal{T}$, its preimage in $\Sigma_g$ is a graph, where the connected components are isomorphic to each other via the elements in $G$. So we can use $\Lambda_{\eta}$ to denote such a component. And we see that $\mathcal{T}_{\phi}$ has multiple edges if and only if $\Lambda_{\eta}$ has multiple edges for some $\eta$ in $\mathcal{T}$.

\begin{lemma}\label{lem:mulE}
Given an edge $\eta\in\mathcal{T}$ and $\Lambda_{\eta}$, let $H_{\eta}=\{h\in G\mid h(\Lambda_{\eta})=\Lambda_{\eta}\}$. Then $\Lambda_{\eta}$ has multiple edges if and only if there exists a nontrivial element $h$ lying in the center of $H_{\eta}$ so that $h$ fixes all the vertices of $\Lambda_{\eta}$.
\end{lemma}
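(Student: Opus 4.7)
The plan is to treat the two directions of the biconditional separately. The $(\Leftarrow)$ direction is a short fixed-point argument, while $(\Rightarrow)$ requires producing a canonical central element from the existence of multiple edges. For $(\Leftarrow)$, assume $h \in Z(H_\eta)$ is nontrivial and fixes every vertex of $\Lambda_\eta$. For any edge $e \in \Lambda_\eta$, the image $h(e)$ shares the endpoints of $e$. Since each pseudo-triangle has three distinct vertices, no edge of $\mathcal{T}$ is a loop, so $e$ has two distinct endpoints; if $h(e) = e$ then $h|_e$ is a self-homeomorphism of the arc fixing both endpoints, hence the identity on $e$. But edges of $\mathcal{T}_\phi$ meet singular points only at their endpoints, so the interior of $e$ consists of regular points, and $h$ fixing such a point forces $h = 1$, a contradiction. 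Therefore $h(e) \neq e$ and $\{e, h(e)\}$ is a pair of multiple edges.

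For $(\Rightarrow)$, suppose $\Lambda_\eta$ contains multiple edges $e_1 \neq e_2$ with common endpoints $\widetilde{P}_1, \widetilde{P}_2$. Since $\eta$ is not a loop, $P_1 \neq P_2$ in $\Sigma_g/G$, so $\widetilde{P}_1, \widetilde{P}_2$ lie in distinct $G$-orbits and cannot be swapped by any element of $G$. Writing $e_2 = g_0(e_1)$ with $g_0 \in H_\eta$ forces $g_0$ to fix both endpoints, so $g_0 \in G_{\widetilde{P}_1} \cap G_{\widetilde{P}_2} \cap H_\eta =: C$, which is thus nontrivial. The next step is to show that the existence of multiple edges forces $H_\eta$ to be abelian (indeed cyclic). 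Equipping $\Sigma_g$ with the $G$-invariant hyperbolic (if $g \geq 2$), Euclidean (if $g = 1$), or spherical (if $g = 0$) structure, I would argue case by case: hyperbolically, no nontrivial orientation-preserving isometry has two distinct fixed points, so $C = 1$ and the case is vacuous; spherically, the two fixed points of any nontrivial $c \in C$ are antipodal along the axis of $c$, and since $P_1 \neq P_2$ no element of $G$ swaps this antipodal pair, forcing $H_\eta$ into the cyclic stabilizer of that axis; in the Euclidean torus case, any element of $H_\eta$ moving $\widetilde{P}_1$ off the center of rotation of $c$ would necessarily carry $\Lambda_\eta$ to a disjoint component of the preimage of $\eta$, hence lie outside $H_\eta$. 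With $H_\eta$ cyclic one has $Z(H_\eta) = H_\eta \supseteq C$; then for any nontrivial $c \in C$ and any $g \in H_\eta$, centrality gives $c(g\widetilde{P}_1) = g(c\widetilde{P}_1) = g\widetilde{P}_1$, so $c$ fixes all of $V_{P_1} = H_\eta \cdot \widetilde{P}_1$, and by the same argument all of $V_{P_2}$, providing the required nontrivial element of $N \cap Z(H_\eta)$.

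The main obstacle will be making the $H_\eta$-is-cyclic reduction precise in the Euclidean case, where the finite orientation-preserving isometry groups of a torus can be nonabelian (e.g.\ semidirect products of a cyclic rotation group with a translation subgroup). The delicate point is that a translational element moving $\widetilde{P}_1$ to another preimage of $P_1$ typically does not preserve the specific connected component $\Lambda_\eta$, and verifying this requires tracking how lifts of $\eta$ at different preimages of $P_1$ connect — that is, showing that distinct translates of the ``multi-edge bouquet'' between $\widetilde{P}_1$ and $\widetilde{P}_2$ end up in distinct components of the preimage of $\eta$ unless linked by a rotation around the common center of $C$.
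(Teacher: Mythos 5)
Your $(\Leftarrow)$ direction is fine and is essentially the argument the paper uses implicitly: a nontrivial central $h$ fixing both endpoints of an edge $\widetilde{\eta}$ cannot fix $\widetilde{\eta}$ itself (its interior consists of regular points), so $\widetilde{\eta}$ and $h(\widetilde{\eta})$ are multiple edges.

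The $(\Rightarrow)$ direction, however, has a fatal gap. Your plan is to show that the presence of multiple edges forces $H_\eta$ to be cyclic (hence abelian), and your key step is the claim that in the hyperbolic case ``no nontrivial orientation-preserving isometry has two distinct fixed points, so $C=1$ and the case is vacuous.'' This confuses isometries of $\mathbb{H}^2$ with isometries of the closed quotient surface: an elliptic isometry of $\mathbb{H}^2$ has one fixed point, but a nontrivial finite-order orientation-preserving isometry of a closed hyperbolic surface $\Sigma_g$ can have up to $2g+2$ fixed points (Lemma~\ref{lem:fixN}), e.g.\ the hyperelliptic involution. The paper's own Remark~\ref{rem:TwoE} gives a genus-$2$ hyperbolic example (the $(2,4,8)$ action) in which multiple edges do occur, $H_0=\langle y^4\rangle$ is nontrivial, and $H_\eta=G=\mathbb{Z}_2\ltimes\mathbb{Z}_8$ is \emph{nonabelian}. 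So both the vacuousness claim and the target of your reduction ($H_\eta$ cyclic) are false; the lemma only asserts that the element lies in the \emph{center} of a possibly nonabelian $H_\eta$. The Euclidean difficulty you flag at the end is therefore not the main obstacle.

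The argument you are missing is purely combinatorial. For an edge $\widetilde{\eta}$ of $\Lambda_\eta$ with endpoints $x,x'$, the edges sharing both endpoints are in bijection with $\mathrm{Stab}(\partial\widetilde{\eta})=\mathrm{Stab}(x)\cap\mathrm{Stab}(x')$ (your $C$). If two edges $\widetilde{\eta},\widetilde{\eta}'$ share only one endpoint $x$, the element $h\in\mathrm{Stab}(x)$ carrying one to the other conjugates $\mathrm{Stab}(\partial\widetilde{\eta})$ to $\mathrm{Stab}(\partial\widetilde{\eta}')$; both are subgroups of the \emph{cyclic} group $\mathrm{Stab}(x)$, so they are equal. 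By connectivity of $\Lambda_\eta$ the group $H_0:=\mathrm{Stab}(\partial\widetilde{\eta})$ is independent of $\widetilde{\eta}$, hence fixes every vertex of $\Lambda_\eta$ and is contained in every vertex stabilizer. Since $H_\eta$ is generated by these (cyclic, hence abelian) vertex stabilizers and $H_0$ lies in each of them, $H_0$ is central in $H_\eta$; it is nontrivial exactly when multiple edges exist. This uses only that point stabilizers are cyclic, with no appeal to the geometric structure.
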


\begin{proof}
Let $\widetilde{\eta}$ and $\widetilde{\eta}'$ be a pair of edges in $\Lambda_{\eta}$. Then, there is a nontrivial element $h$ in $H_{\eta}$ so that $h(\widetilde{\eta})=\widetilde{\eta}'$. If they have the same vertices $x$ and $x'$, then $h$ fixes such vertices, and so $h\in\mathrm{Stab}(x)\cap\mathrm{Stab}(x')$. Hence, by fixing $\widetilde{\eta}$, there exists a bijective correspondence between the edges with the same vertices as $\widetilde{\eta}$ and the elements in $\mathrm{Stab}(\partial\widetilde{\eta})=\mathrm{Stab}(x)\cap\mathrm{Stab}(x')$. On the other hand, if $\widetilde{\eta}\cap\widetilde{\eta}'=x$, then $h\in\mathrm{Stab}(x)$ and we have $\mathrm{Stab}(\partial\widetilde{\eta})=h^{-1}\mathrm{Stab}(\partial\widetilde{\eta}')h= \mathrm{Stab}(\partial\widetilde{\eta}')$, since $\mathrm{Stab}(x)$ is cyclic. Then all the possible groups $\mathrm{Stab}(\partial\widetilde{\eta})$ are the same, because $\Lambda_{\eta}$ is connected. Note that $H_{\eta}$ can be generated by those groups $\mathrm{Stab}(x)$ corresponding to the vertices in $\Lambda_{\eta}$. So $\mathrm{Stab}(\partial\widetilde{\eta})$ lies in the center of $H_{\eta}$. And we have the result.
\end{proof}

The condition in the above lemma can also be described from the point of view of orbifolds. Without loss of generality, we can assume that the basepoint $\overline{\star}$ lies in the interior of $\eta$, and $\Lambda_{\eta}$ contains the basepoint $\star$. Then, $\overline{\star}$ divides $\eta$ into two arcs $\eta_1$ and $\eta_2$. For $i=1,2$, let $N_i$ be a small regular neighborhood of $\eta_i$ and let $H_i$ be the image of $\pi_1(N_i,\overline{\star})$ in $G$ under $\phi$. Let $H_0\subseteq G$ be the subgroup that consists of the elements fixing all the vertices of $\Lambda_{\eta}$. Then $H_0,H_1,H_2\subseteq H_{\eta}$.

\begin{lemma}\label{lem:theHi}
Given an edge $\eta\in\mathcal{T}$, let $H_{\eta}$, $H_1$, $H_2$, and $H_0$ be as above. Then $H_{\eta}$ is generated by $H_1$ and $H_2$, and $H_0=H_1\cap H_2$.
\end{lemma}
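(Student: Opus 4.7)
The plan is to prove (a) via an orbifold van Kampen argument on a neighborhood of $\eta$ in $\Sigma_g/G$, and then to deduce (b) from (a) using that the point stabilizers $H_1, H_2$ are cyclic.

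For (a), I will take a small regular neighborhood $N$ of $\eta$ in $\Sigma_g/G$, decomposed as $N = N_1 \cup N_2$ so that $N_1 \cap N_2$ is a simply connected disk around $\overline{\star}$. Since $N_1 \cap N_2$ is simply connected, the orbifold van Kampen theorem yields
\[
\pi_1^{\mathrm{orb}}(N, \overline{\star}) \;\cong\; \pi_1(N_1, \overline{\star}) * \pi_1(N_2, \overline{\star}).
\]
Letting $\widetilde{N}$ be the connected component of $\pi^{-1}(N)$ containing $\star$, which is a regular neighborhood of $\Lambda_{\eta}$, the standard theory of Galois orbifold covers identifies the setwise stabilizer $\mathrm{Stab}_G(\widetilde{N})$ with $\phi(\pi_1^{\mathrm{orb}}(N, \overline{\star}))$; this stabilizer coincides with $H_{\eta}$ because $\widetilde{N}$ deformation retracts onto $\Lambda_{\eta}$. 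Applying $\phi$ to the free-product decomposition then gives $H_{\eta} = \langle H_1, H_2 \rangle$.

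For (b), the inclusion $H_0 \subseteq H_1 \cap H_2$ is immediate: $x_1$ and $x_2$ are vertices of $\Lambda_{\eta}$ and $H_i = \mathrm{Stab}(x_i)$, so any element fixing all vertices lies in $\mathrm{Stab}(x_1) \cap \mathrm{Stab}(x_2) = H_1 \cap H_2$. Conversely, I observe that the vertex set of $\Lambda_{\eta}$ equals $H_{\eta}\cdot x_1 \cup H_{\eta}\cdot x_2$, because the edges of $\Lambda_{\eta}$ are the translates $g\widetilde{\eta}$ for $g \in H_{\eta}$ with endpoints $g(x_1), g(x_2)$. Therefore
\[
H_0 \;=\; \bigcap_{g \in H_{\eta}} g\,(H_1 \cap H_2)\,g^{-1},
\]
and it suffices to show that $H_1 \cap H_2$ is normal in $H_{\eta}$. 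By (a), $H_{\eta} = \langle H_1, H_2 \rangle$, so normality reduces to normality under conjugation by each $H_i$ individually. Since every point stabilizer in an orientation-preserving $G$-action on a surface is cyclic, each $H_i$ is abelian, so any subgroup of $H_i$ (in particular $H_1 \cap H_2$) is automatically normal in $H_i$.

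The step meriting the most care is the identification $\mathrm{Stab}_G(\widetilde{N}) = \phi(\pi_1^{\mathrm{orb}}(N, \overline{\star}))$, together with the preceding matching $H_i = \mathrm{Stab}(x_i)$ coming from lifting a small loop around $v_i$ to a rotation around $x_i$. Both are standard consequences of the theory of Galois orbifold covers, but they carry the geometric content of the lemma, whereas once they are in hand the conclusion follows formally from the fact that the stabilizers $H_i$ are cyclic.
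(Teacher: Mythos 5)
Your proof is correct, and its first half follows the paper's own route: the paper likewise takes $N=N_1\cup N_2$, notes that $\langle H_1,H_2\rangle=\phi(\pi_1(N,\overline{\star}))$, and identifies this image with $H_{\eta}$; the only difference is that the paper verifies $H_{\eta}\subseteq\phi(\pi_1(N,\overline{\star}))$ by an explicit path-lifting argument in the regular neighborhood $N'$ of $\Lambda_{\eta}$, where you cite the general Galois-cover correspondence between component stabilizers and images of fundamental groups (the same fact, quoted rather than re-proved). For the second half your route genuinely differs. The paper identifies $H_1\cap H_2$ with $\mathrm{Stab}(\partial\widetilde{\eta})$ for the edge $\widetilde{\eta}$ through $\star$ and then appeals to the proof of Lemma~\ref{lem:mulE}, where connectivity of $\Lambda_{\eta}$ together with cyclicity (hence commutativity) of vertex stabilizers shows that $\mathrm{Stab}(\partial\widetilde{\eta})$ is the same group for every edge of $\Lambda_{\eta}$, so it fixes all vertices and equals $H_0$. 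You instead describe the vertex set of $\Lambda_{\eta}$ as $H_{\eta}\cdot x_1\cup H_{\eta}\cdot x_2$, so that $H_0=\bigcap_{g\in H_{\eta}}g(H_1\cap H_2)g^{-1}$, and use part (a) plus the abelianness of each cyclic $H_i$ to conclude that $H_1\cap H_2$ is normal in $H_{\eta}=\langle H_1,H_2\rangle$, whence the intersection collapses to $H_1\cap H_2$. Both arguments rest on the same underlying fact (abelian vertex stabilizers), but yours replaces the edge-by-edge conjugation chain of Lemma~\ref{lem:mulE} by a one-line normality observation, making this part of the statement independent of that lemma's proof, while the paper's version simply recycles work it has already done there. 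Two small points you use implicitly and should record: every edge of $\Lambda_{\eta}$ is indeed an $H_{\eta}$-translate of $\widetilde{\eta}$ (since $G$ permutes the lifts of $\eta$ transitively and any element carrying one edge of $\Lambda_{\eta}$ to another preserves the component), and $H_i=\mathrm{Stab}(x_i)$, which the paper also uses and states as a consequence of the definition of $\phi$.
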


\begin{proof}
We can assume that $N=N_1\cup N_2$ gives a small regular neighborhood of $\eta$. Then the subgroup of $G$ generated by $H_1$ and $H_2$ is equal to $\phi(\pi_1(N,\overline{\star}))$.

Let $N'$ be the regular neighborhood of $\Lambda_{\eta}$ in $\Sigma_g$ corresponding to $N$. Then, for each $h\in H_{\eta}$, we can choose a path $\widetilde{a}$ in $N'$ from $\star$ to $h(\star)$, which gives a loop $a$ in $N$. So $H_{\eta}\subseteq\phi(\pi_1(N,\overline{\star}))$, by the definition of $\phi$, and we have $H_{\eta}=\phi(\pi_1(N,\overline{\star}))$.

Let $\widetilde{\eta}$ be the edge in $\Lambda_{\eta}$ containing $\star$. Then by the definition of $\phi$, those groups $H_1$ and $H_2$ are the stabilizers of those two endpoints of $\widetilde{\eta}$, respectively. And so we have $H_1\cap H_2=\mathrm{Stab}(\partial\widetilde{\eta})$, which is equal to $H_0$, by the proof of Lemma~\ref{lem:mulE}.
\end{proof}

\begin{lemma}\label{lem:noME}
Given an edge $\eta\in\mathcal{T}$ and $\Lambda_{\eta}$, $H_{\eta}$ as above, let $p$, $q$ be the indices of the two endpoints of $\eta$, respectively. Then, $\Lambda_{\eta}$ has no multiple edges if $p$ and $q$ are coprime, or if $p$ is prime and $H_{\eta}$ is not cyclic.
\end{lemma}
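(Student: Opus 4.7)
The plan is to invoke Lemmas~\ref{lem:mulE} and~\ref{lem:theHi} to translate the existence of a pair of multiple edges in $\Lambda_\eta$ into a statement purely about the orders of $H_1$ and $H_2$, and then dispatch the two cases by an elementary order-and-generators argument. By Lemma~\ref{lem:mulE}, multiple edges exist precisely when there is a nontrivial element of $Z(H_\eta)$ that fixes every vertex of $\Lambda_\eta$, and by Lemma~\ref{lem:theHi} the set of elements fixing every vertex is exactly $H_0=H_1\cap H_2$. Hence a necessary condition for the existence of multiple edges is that $H_1\cap H_2$ be nontrivial.

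Before splitting into cases, I would record the following preparatory fact drawn from the proof of Lemma~\ref{lem:theHi}: $H_1$ and $H_2$ are the stabilizers of the two endpoints of the preferred lift $\widetilde{\eta}$, so they are cyclic (stabilizers of an orientation-preserving surface action are cyclic) of orders equal to the indices $p$ and $q$ of the corresponding endpoints of $\eta$.

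With this in hand, the first case is immediate: if $\gcd(p,q)=1$, then $|H_1\cap H_2|$ divides $\gcd(|H_1|,|H_2|)=1$, so $H_1\cap H_2$ is trivial and no multiple edges can occur. For the second case, assume $p$ is prime and $H_\eta$ is not cyclic. I would argue by contradiction: if $H_1\cap H_2$ were nontrivial, then since $|H_1|=p$ is prime, the intersection must be all of $H_1$, so $H_1\subseteq H_2$. Lemma~\ref{lem:theHi} then gives $H_\eta=\langle H_1,H_2\rangle=H_2$, which is cyclic, contradicting the hypothesis. Thus $H_1\cap H_2$ is trivial in this case too.

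The main obstacle here is essentially none: once Lemmas~\ref{lem:mulE} and~\ref{lem:theHi} are in place, the result is forced. The only point to be careful about is to justify that $|H_i|$ equals the full index $p$ (or $q$), not merely a divisor, so that primality of $p$ can be used to conclude $H_1\cap H_2=H_1$; this is precisely what the identification of $H_i$ with a point stabilizer in the proof of Lemma~\ref{lem:theHi} provides.
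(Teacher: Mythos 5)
Your proposal is correct and follows essentially the same route as the paper: reduce via Lemma~\ref{lem:mulE} and Lemma~\ref{lem:theHi} to showing $H_0=H_1\cap H_2$ is trivial, handle the coprime case by order considerations, and in the prime case derive $H_1\subseteq H_2$ and hence $H_\eta=H_2$ cyclic for a contradiction. Your explicit remark that $|H_i|$ equals the full index $p$ (resp.\ $q$), justified by identifying $H_i$ with the stabilizer of an endpoint of $\widetilde{\eta}$, is exactly the point the paper glosses with ``assume that $|H_1|=p$, $|H_2|=q$.''
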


\begin{proof}
One can get $H_1$, $H_2$ as above and assume that $|H_1|=p$, $|H_2|=q$. If $p$ and $q$ are coprime, then $H_0$ is trivial by Lemma~\ref{lem:theHi}. By Lemma~\ref{lem:mulE}, we get the result. If $H_{\eta}$ is not cyclic, then $H_{\eta}\neq H_2$. By Lemma~\ref{lem:theHi}, we have $H_0\neq H_1$. Then, $H_0$ is trivial when $p$ is prime, and we get the result as before.
\end{proof}


\subsection{Large actions and hyperbolic triangle groups}\label{subsec:LAandHTG}
Now we mainly consider those actions $(\Sigma_g,G)$ with $g\geq 2$ and $|G|>12(g-1)$. We assume that $|\Sigma_g/G|$ has genus $\bar{g}$ and there exist $l$ singular points with indices $n_1,\ldots,n_l$ in $\Sigma_g/G$. Then by the Riemann-Hurwitz formula, we have
\[-\frac{1}{6}<2-2\bar{g}-\sum_{k=1}^{l}(1-\frac{1}{n_k})=\frac{2-2g}{|G|}<0.\]
So $\bar{g}=0$ and $l=3$. Hence in what follows we can assume that $\Sigma_g/G$ is a $2$-sphere with three singular points $P$, $Q$, $R$ which have indices $p\leq q\leq r$, respectively, and there is a pseudo-triangulation $\mathcal{T}$ of $\Sigma_g/G$ with exactly two pseudo-triangles. The pseudo-triangulation is actually unique up to isotopy.

\begin{proposition}\label{pro:cases}
Let $\Sigma_g/G$ be as above, where $|G|>12(g-1)>0$. Then

(1) $(p,q,r)=(2,3,r)$, where $r\geq 7$; or

(2) $(p,q,r)=(2,4,r)$, where $5\leq r\leq 11$; or

(3) $(p,q,r)=(2,5,r)$, where $5\leq r\leq 7$; or

(4) $(p,q,r)=(3,3,r)$, where $4\leq r\leq 5$.
\end{proposition}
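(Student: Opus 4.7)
The plan is to reduce Proposition~\ref{pro:cases} to an elementary enumeration. The Riemann-Hurwitz inequality already displayed above,
\[-\frac{1}{6} < -1 + \frac{1}{p} + \frac{1}{q} + \frac{1}{r} < 0,\]
is equivalent to
\[\frac{5}{6} < \frac{1}{p} + \frac{1}{q} + \frac{1}{r} < 1,\]
so the task reduces to listing all integer triples with $2 \leq p \leq q \leq r$ satisfying this two-sided bound.

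First I would bound $p$. Since $1/p + 1/q + 1/r \leq 3/p$, the lower bound forces $3/p > 5/6$ and hence $p \leq 3$. For each admissible $p$, I would then bound $q$ by the same monotonicity trick applied to $1/q + 1/r \leq 2/q$: the case $p = 3$ gives $q \leq 3$, so $q = 3$; the case $p = 2$ gives $q \leq 5$, while the upper bound $1/q + 1/r < 1/2$ simultaneously excludes $q = 2$, leaving $q \in \{3,4,5\}$. With $p$ and $q$ fixed, the remaining constraint on $r$ becomes a single interval in $1/r$, producing either a finite list or a lower-bounded tail. Carrying this out yields case~(1) from $(p,q) = (2,3)$, case~(2) from $(p,q) = (2,4)$, case~(3) from $(p,q) = (2,5)$ (intersected with the order constraint $r \geq 5$), and case~(4) from $(p,q) = (3,3)$.

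There is no genuine conceptual obstacle; the whole argument is a finite sieve on the rewritten inequality. The only thing requiring care is to track the strict versus weak inequalities at the endpoints so that, for instance, $r = 12$ is excluded in case~(2) while $r = 6$ is included, and the ordering constraint $r \geq q$ is re-imposed in the $q = 5$ subcase to discard the otherwise admissible value $r = 4$.
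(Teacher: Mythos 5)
Your proposal is correct and follows essentially the same route as the paper, which derives the two-sided bound $\tfrac{5}{6}<\tfrac{1}{p}+\tfrac{1}{q}+\tfrac{1}{r}<1$ from the Riemann--Hurwitz formula and then simply states that the list follows by enumeration; your sieve on $p$, then $q$, then $r$ is exactly that enumeration carried out explicitly, with the endpoint checks done correctly.
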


\begin{proof}
Now the Riemann-Hurwitz formula and the above inequality become
\[\frac{5}{6}<\frac{1}{p}+\frac{1}{q}+\frac{1}{r}=\frac{2-2g}{|G|}+1<1.\]
Then one can obtain the result by enumeration.
\end{proof}

\begin{proposition}\label{pro:ETforLA}
Let $\Sigma_g/G$ and $\mathcal{T}$ be as above, where $|G|>12(g-1)>0$. The map $\phi:\pi_1(\Sigma_g/G,\overline{\star})\rightarrow G$ defines $\mathcal{T}_{\phi}$ as before. Then this $\mathcal{T}_{\phi}$ gives a $G$-equivariant triangulation of $\Sigma_g$ except when $g=2$ and $(p,q,r)$ is $(2,4,8)$.
\end{proposition}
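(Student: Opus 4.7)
The plan is to invoke Lemma~\ref{lem:EofET} and thereby reduce the statement to showing that, outside the exceptional triple, the pseudo-triangulation $\mathcal{T}_{\phi}$ has no pair of multiple edges. The pseudo-triangulation $\mathcal{T}$ of $\Sigma_g/G$ carries exactly two pseudo-triangles on the vertex set $\{P,Q,R\}$, hence exactly three edges, one for each pair of singular points; I would examine them in turn via Lemmas~\ref{lem:mulE}--\ref{lem:noME}.

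First I would observe that, for every edge $\eta$, the subgroup $H_{\eta}$ of Lemma~\ref{lem:theHi} equals $G$: that lemma identifies $H_{\eta}$ as the subgroup generated by the two vertex-stabilizers at the endpoints of $\eta$, but the remaining stabilizer $\phi(\gamma_{*})$ already lies in this subgroup by virtue of the relation $\phi(\gamma_P)\phi(\gamma_Q)\phi(\gamma_R)=1$. Next I would verify that $G$ is non-cyclic in all four cases of Proposition~\ref{pro:cases}. This follows by abelianizing the orientation-preserving triangle group presentation $\langle x,y,z\mid x^p,y^q,z^r,xyz\rangle$ and observing that in each case the image of $z$ has order strictly less than $r$, which contradicts the requirement that $\phi(\gamma_R)$ have exact order $r$ in any cyclic (hence abelian) quotient $G$.

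Combined with Lemma~\ref{lem:noME}, these two facts eliminate multiple edges along every edge $\eta$ whose index pair $(a,b)$ is either coprime or contains a prime entry. Running through the four cases of Proposition~\ref{pro:cases}, the only unresolved edges are, in Case~(2), the edges $QR$ of index pair $(4,r)$ with $r\in\{6,8,10\}$. For these I would return to Lemma~\ref{lem:mulE} directly: since $H_0\subseteq\langle\phi(\gamma_Q)\rangle$ is cyclic of order dividing $2$, multiple edges occur precisely when $\phi(\gamma_Q)^{2}$ coincides with $\phi(\gamma_R)^{r/2}$ and moreover lies in the center $Z(G)$. Determining $|G|$ via Riemann--Hurwitz (namely $24(g-1)$, $16(g-1)$, and $40(g-1)/3$ respectively) and analyzing the possible finite quotients of $\langle x,y,z\mid x^2,y^4,z^r,xyz\rangle$ of that order, one checks that this combined condition is met exactly once: when $r=8$ and $g=2$, the constraint $|G|=16$ together with the presence of an element of order $8$ forces $G$ to be the semidihedral group $\mathrm{SD}_{16}=\langle x,z\mid x^{2}=z^{8}=1,\,xzx=z^{3}\rangle$, in which $\phi(\gamma_Q)^{2}=\phi(\gamma_R)^{4}$ generates $Z(G)$, producing the multiple edges that yield the exceptional triple.

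The main obstacle is the structural analysis of those three leftover sub-cases in Case~(2), where Lemma~\ref{lem:noME} does not suffice and one must argue from the triangle-group presentation together with the integrality constraints on $|G|$. Identifying $G\cong\mathrm{SD}_{16}$ in the exceptional triple is forced by the unique index-$2$ cyclic subgroup generated by the order-$8$ element; the more delicate part is ruling out either the equality $\phi(\gamma_Q)^{2}=\phi(\gamma_R)^{r/2}$ or the centrality of this involution in all other permissible pairs, namely $r\in\{6,10\}$ with any admissible $g$ and $(r,g)=(8,g)$ for $g\geq 3$, which reduces to a finite enumeration of possible finite quotients.
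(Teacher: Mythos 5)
Your overall skeleton matches the paper's: reduce via Lemma~\ref{lem:EofET} to showing $\mathcal{T}_{\phi}$ has no multiple edges, observe $H_{\eta}=G$ for every edge, check $G$ is non-cyclic, apply Lemma~\ref{lem:noME} to kill every edge except $QR$ in the cases $(2,4,r)$ with $r\in\{6,8,10\}$, and then analyze those three sub-cases directly. However, there is a genuine gap in the last step. You propose to settle $(2,4,6)$, $(2,4,8)$ with $g\geq 3$, and $(2,4,10)$ by "a finite enumeration of possible finite quotients" of the relevant triangle group of order $24(g-1)$, $16(g-1)$, or $40(g-1)/3$. But $g$ is unbounded at this point in your argument, so this is not a finite enumeration: for each admissible $g$ there is a potentially nonempty (and a priori unclassified) family of quotients to examine, and you give no mechanism to rule them all out. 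The paper's key idea, which your proposal is missing, is a fixed-point count: by Lemma~\ref{lem:mulE} a pair of multiple edges forces a nontrivial central element $h\in H_0$ fixing every vertex of $\Lambda_{\eta}$, hence $|\mathrm{Fix}(h)|\geq |G|/q+|G|/r>|G|/3>4(g-1)$; combined with the elementary bound $|\mathrm{Fix}(h)|\leq 2g+2$ of Lemma~\ref{lem:fixN}, this forces $g=2$, after which Riemann--Hurwitz excludes $(2,4,10)$ and Broughton's classification (Remark~\ref{rem:TwoE}) disposes of the two remaining actions. Without some such uniform bound on $g$, your case analysis does not close.

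A secondary, fixable error: your non-cyclicity argument claims that in each of the four cases the image of $z$ in the abelianization of the triangle group has order strictly less than $r$. This fails for $(2,5,5)$, where the abelianization is $\mathbb{Z}/5$ and the image of $z$ has order exactly $5$; there one must instead observe that the image of $x$ is trivial, contradicting $\phi(\gamma_P)$ having order $2$. (The paper avoids this case-checking entirely by citing Wiman's bound $|G|\leq 4g+2$ for cyclic actions, which contradicts $|G|>12(g-1)$.) Your identification of the exceptional group as the semidihedral group of order $16$ agrees with the paper's Remark~\ref{rem:TwoE}, but it is only reached there after the reduction to $g=2$ that your argument does not supply.
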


\begin{proof}
Let $\eta\in\mathcal{T}$ be an edge. By the presentation of $\pi_1(\Sigma_g/G,\overline{\star})$ and the proof of Lemma~\ref{lem:theHi}, we have $H_{\eta}=\phi(\pi_1(N,\overline{\star}))=G$. So the preimage of $\eta$ in $\Sigma_g$ is $\Lambda_{\eta}$. We also see that $H_{\eta}$ is not cyclic, for otherwise, $12(g-1)<|G|\leq 4g+2$ (see \cite{Wi}).

Then, by Proposition~\ref{pro:cases} and Lemma~\ref{lem:noME}, we see that $\Lambda_{\eta}$ can contain multiple edges only if $(p,q,r)$ is one of $(2,4,6)$, $(2,4,8)$, or $(2,4,10)$, and $\eta$ is the edge with endpoints $Q$ and $R$. Furthermore, by Lemma~\ref{lem:mulE}, there exists a nontrivial element $h\in H_0$ which also lies in the center of $G$. So $|\mathrm{Fix}(h)|\geq |G|/q+|G|/r>|G|/3$. By Lemma~\ref{lem:fixN} below, this is possible only if $g=2$, since $|G|>12(g-1)$.

If $g=2$, then by the Riemann-Hurwitz formula, $(p,q,r)$ cannot be $(2,4,10)$. In the case when $(p,q,r)$ is $(2,4,6)$, by Remark~\ref{rem:TwoE} below, the action is unique, and $H_0$ is trivial. So the only possible case is when $(p,q,r)$ is $(2,4,8)$. Since $\mathcal{T}_{\phi}$ has no multiple edges in the other cases, we get the result by Lemma~\ref{lem:EofET}.
\end{proof}

\begin{lemma}\label{lem:fixN}
Let $h$ be a nontrivial periodic map on $\Sigma_g$. Then $|\mathrm{Fix}(h)|\leq 2g+2$.
\end{lemma}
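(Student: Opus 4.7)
The plan is to apply the Riemann--Hurwitz formula to the branched quotient map $\Sigma_g\to\Sigma_g/\langle h\rangle$. Let $n\geq 2$ be the order of $h$ and set $C=\langle h\rangle$, which acts on $\Sigma_g$ with quotient orbifold $\Sigma_g/C$. First I would identify $\mathrm{Fix}(h)$ with a specific subset of the singular set of the quotient. Since $h$ generates $C$, a subgroup of $C$ contains $h$ precisely when it equals all of $C$. Therefore a point $x\in\Sigma_g$ is fixed by $h$ if and only if its $C$-stabilizer has order exactly $n$, which is to say that $x$ projects to a singular point of $\Sigma_g/C$ of index $n$. Each such singular point has only one preimage in $\Sigma_g$, so $|\mathrm{Fix}(h)|$ equals the number $s$ of singular points of $\Sigma_g/C$ whose index equals $n$.

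Next I would plug this into Riemann--Hurwitz. If $|\Sigma_g/C|$ has genus $\bar g$ and the singular indices are $n_1,\ldots,n_l$, then
$$2g-2 \;=\; n(2\bar g-2) + \sum_{k=1}^{l}\left(n-\frac{n}{n_k}\right).$$
Exactly $s$ of the $n_k$ equal $n$, contributing $s(n-1)$ to the sum, and the remaining summands are nonnegative; combined with $\bar g\geq 0$, this yields
$$s(n-1) \;\leq\; 2g-2-n(2\bar g-2) \;\leq\; 2g+2n-2,$$
so $s\leq 2+2g/(n-1)$.

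For $n=2$ this estimate is exactly $s\leq 2g+2$, while for $n\geq 3$ it gives $s\leq g+2$, which is $\leq 2g+2$ whenever $g\geq 0$. In every case $|\mathrm{Fix}(h)|=s\leq 2g+2$, as required. The main obstacle I anticipate is essentially bookkeeping: one must take care to include only singular points whose local group is all of $C$ (and not those whose local group is a proper subgroup) in the count of $\mathrm{Fix}(h)$, and then verify that the bound from Riemann--Hurwitz is dominated by $2g+2$ uniformly in $n$. The estimate is sharp at $n=2$, realized by the hyperelliptic involution, which provides a useful consistency check.
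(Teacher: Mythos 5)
Your proof is correct and follows essentially the same route as the paper: identify $\mathrm{Fix}(h)$ with the singular points of $\Sigma_g/\langle h\rangle$ of index equal to the order of $h$, apply the Riemann--Hurwitz formula, and bound the remaining terms using $\bar g\geq 0$ to get $|\mathrm{Fix}(h)|\leq 2+2g/(n-1)\leq 2g+2$. The only difference is presentational (you split the cases $n=2$ and $n\geq 3$, while the paper simply notes $2g/(m-1)+2\leq 2g+2$), so there is nothing to add.
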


\begin{proof}
Assume that $h$ has order $m$, $|\mathrm{Fix}(h)|=l$, and the orbifold $\Sigma_g/\langle h\rangle$ has $k+l$ singular points with indices $n_1,\ldots,n_{k+l}$, where $n_{k+j}=m$ for all $1\leq j\leq l$. We let $\bar{g}$ be the genus of $|\Sigma_g/\langle h\rangle|$. Then by the Riemann-Hurwitz formula, we have
\[2-2g=m(2-2\bar{g}-\sum_{j=1}^{k}(1-\frac{1}{n_j})-l(1-\frac{1}{m}))\leq 2m-l(m-1).\]
So $l\leq 2g/(m-1)+2\leq 2g+2$, and we get the result.
\end{proof}

\begin{remark}\label{rem:TwoE}
Those actions $(\Sigma_g,G)$ with $g=2$ have been classified in \cite{Br}. There exists exactly one action if $(p,q,r)$ is $(2,4,6)$ or $(2,4,8)$, respectively. For the case of $(2,4,6)$, $\mathcal{T}_{\phi}$ has no multiple edges, but for $(2,4,8)$, it has multiple edges.

Let $(p,q,r)=(2,4,r)$, where $r=6$ or $r=8$. Then $\pi_1(\Sigma_g/G,\overline{\star})$ is given by
\[\langle \alpha,\beta,\gamma \mid \alpha^2=\beta^4=\gamma^r=\alpha\beta\gamma=1 \rangle.\]

If $r=6$, then the group $G$ and the map $\phi:\pi_1(\Sigma_g/G,\overline{\star})\rightarrow G$ are given by
\begin{align*}
\langle x,y,z,w \mid x^2=y^2&=z^2=w^3=[y,z]=[y,w]=[z,w]=1,\\
xyx^{-1}&=y, xzx^{-1}=zy, xwx^{-1}=w^{-1}\rangle
\end{align*}
and $\alpha\mapsto x$, $\beta\mapsto xw^2z$, $\gamma\mapsto zw$, respectively. Note that $\phi(\beta)^2=y$, $G$ is actually a semidirect product $\mathbb{Z}_2\ltimes(\mathbb{Z}_2\times\mathbb{Z}_2\times\mathbb{Z}_3)$, and $y$ generates the center of $G$. Because $\phi(\gamma)^3=z$, we see that $H_0=\langle\phi(\beta)\rangle\cap\langle\phi(\gamma)\rangle$ is trivial.

If $r=8$, then the group $G$ and the map $\phi:\pi_1(\Sigma_g/G,\overline{\star})\rightarrow G$ are given by
\begin{align*}
\langle x,y \mid x^2=y^8=1, xyx^{-1}=y^{3}\rangle
\end{align*}
and $\alpha\mapsto x$, $\beta\mapsto y^5x$, $\gamma\mapsto y$, respectively. Now $G$ is a semidirect product $\mathbb{Z}_2\ltimes\mathbb{Z}_8$, $\phi(\beta)^2=y^4$ generates the center of $G$, and $H_0=\langle\phi(\beta)\rangle\cap\langle\phi(\gamma)\rangle=\langle y^4\rangle$.
\end{remark}

By Proposition~\ref{pro:ETforLA} and Remark~\ref{rem:tri}, for most large actions, we already have
\[d_g(G)\leq \frac{|G|}{p}+\frac{|G|}{q}+\frac{|G|}{r} <|G|.\]
To obtain better upper bounds of $d_g(G)$, we also need to modify the triangulation given by Proposition~\ref{pro:ETforLA}, and we need to apply Theorem~\ref{thm:tri}.

A convenient way to get the required triangulation of $\Sigma_g$ from $\mathcal{T}_{\phi}$ is to consider the construction of hyperbolic triangle groups. We let $\mathbb{H}^2$ be the hyperbolic plane, and let $\mathrm{Isom}(\mathbb{H}^2)$ be the isometry group of $\mathbb{H}^2$. For any positive integers $p\leq q\leq r$ satisfying $1/p+1/q+1/r<1$, there exists a unique geodesic triangle $\Delta_{p,q,r}$ in $\mathbb{H}^2$ with angles $\pi/p$, $\pi/q$, and $\pi/r$, up to isometry. Then the reflections in those three geodesic lines containing the edges of $\Delta_{p,q,r}$ generate a discrete group in $\mathrm{Isom}(\mathbb{H}^2)$ where the orbit of $\Delta_{p,q,r}$ under the group action gives a tessellation of $\mathbb{H}^2$. And in this group, the orientation-preserving elements give an index two subgroup, which is the hyperbolic triangle group with type $(p,q,r)$, denoted by $T_{p,q,r}$. So one has a pseudo-triangulation of the orbifold $\mathbb{H}^2/T_{p,q,r}$, which contains exactly two pseudo-triangles coming from the geodesic triangles in $\mathbb{H}^2$.

Since we can identify $\Sigma_g/G$ with $\mathbb{H}^2/T_{p,q,r}$ for some $(p,q,r)$, we can view $\Sigma_g$ as a hyperbolic surface, where the triangles in $\mathcal{T}_{\phi}$ are all geodesic ones. Note that
\[\pi_1(\Sigma_g/G,\overline{\star})\cong \langle \alpha,\beta,\gamma \mid \alpha^p=\beta^q=\gamma^r=\alpha\beta\gamma=1 \rangle \cong T_{p,q,r},\]
where $\alpha$, $\beta$, and $\gamma$ correspond to the rotations about the three vertices of $\Delta_{p,q,r}$ in $T_{p,q,r}$, with angles $2\pi/p$, $2\pi/q$, and $2\pi/r$, respectively. Then, by identifying $\ker(\phi)$ with a subgroup of $T_{p,q,r}$, we can identify $\Sigma_g$ with $\mathbb{H}^2/\ker(\phi)$. Figure~\ref{fig:TriG2} shows two examples corresponding to the two cases in Remark~\ref{rem:TwoE}, where those edges of the polygons that should be identified by elements in $\ker(\phi)$ are labelled.

\begin{figure}[h]
\includegraphics{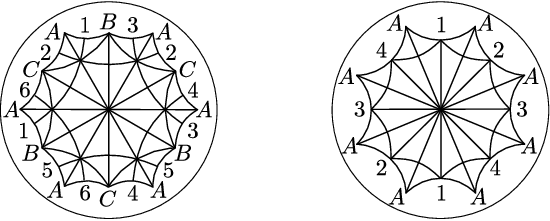}
\caption{The $(2,4,6)$ and $(2,4,8)$ triangular tilings of $\Sigma_2$.}\label{fig:TriG2}
\end{figure}

Now as in Proposition~\ref{pro:ETforLA}, assume that $\mathcal{T}_{\phi}$ gives a $G$-equivariant triangulation of $\Sigma_g$, then we modify $\mathcal{T}_{\phi}$ as follows. Let $\eta\in\mathcal{T}$ be the edge with endpoints $P$ and $Q$, and let $\widetilde{\eta}$ be an edge in $\Lambda_{\eta}$. Then we have two geodesic triangles $\Delta_1$ and $\Delta_2$ in $\mathcal{T}_{\phi}$ containing $\widetilde{\eta}$. There are two cases according to Proposition~\ref{pro:cases}. If $p=2$, then $\Delta_1\cup\Delta_2$ gives a larger geodesic triangle, since $\mathcal{T}_{\phi}$ has no multiple edges. Then, all such larger triangles induce a new pseudo-triangulation of $\Sigma_g$. Otherwise, we have $p>2$, and $\Delta_1\cup\Delta_2$ gives a convex geodesic quadrilateral by the same reason. The diagonal other than $\widetilde{\eta}$ divides it into two geodesic triangles. So, similarly, all these triangles induce a new pseudo-triangulation of $\Sigma_g$. Note that any new triangle has two vertices lying in the preimage of $R$. So we denote the pseudo-triangulation by $\mathcal{T}_{\phi}(R)$. Similarly, by interchanging $Q$ and $R$, we can also define $\mathcal{T}_{\phi}(Q)$. We let $V_P$, $V_Q$, and $V_R$ denote the preimages of $P$, $Q$, and $R$ in $\Sigma_g$, respectively.

\begin{proposition}\label{pro:IndET}
Let $\Sigma_g/G$, $\mathcal{T}$, and $\mathcal{T}_{\phi}$ be as above, where $|G|>12(g-1)>0$. Then, the induced $\mathcal{T}_{\phi}(Q)$ or $\mathcal{T}_{\phi}(R)$ gives a $G$-equivariant triangulation of $\Sigma_g$ if the index $q$ or $r$ is prime, respectively. Moreover, if $(p,q,r)$ is $(2,3,8)$ and $g>2$, then $\mathcal{T}_{\phi}(R)$ also gives a $G$-equivariant triangulation of $\Sigma_g$.
\end{proposition}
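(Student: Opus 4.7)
The plan is to reduce, following the same idea as in Lemma~\ref{lem:EofET}, to showing that the pseudo-triangulation $\mathcal{T}_\phi(R)$ (resp.~$\mathcal{T}_\phi(Q)$) of $\Sigma_g$ has no pair of multiple edges. Only the case of $\mathcal{T}_\phi(R)$ with $r$ prime needs separate treatment, since the $\mathcal{T}_\phi(Q)$ case is entirely parallel after interchanging $Q$ and $R$. I would begin by splitting the edges of $\mathcal{T}_\phi(R)$ into the edges inherited from $\mathcal{T}_\phi$ (the components of $\Lambda_{QR}$, and of $\Lambda_{PR}$ when $p>2$) and the new $RR$-edges produced by either gluing two adjacent $PR$-edges through a $V_P$-vertex (when $p=2$) or taking the ``other'' diagonal of the geodesic quadrilateral $\Delta_1\cup\Delta_2$ (when $p>2$). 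Because $r$ prime rules out $(p,q,r)=(2,4,8)$, Proposition~\ref{pro:ETforLA} already guarantees that no multiple edges occur among the inherited ones, so everything reduces to the new $RR$-edges.

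Next, Lemma~\ref{lem:noME} applied to each new edge $\eta^*$ says that, because both of its orbifold endpoints are $R$ with prime index $r$, it suffices to prove that the associated subgroup $H_{\eta^*}\leq G$ is non-cyclic. Using the hyperbolic triangle group $T_{p,q,r}$ acting on $\mathbb{H}^2$ together with (an adaptation of) Lemma~\ref{lem:theHi}, one identifies $H_{\eta^*}$ as the subgroup generated by two specific cyclic subgroups of order $r$, namely $\langle\phi(\gamma)\rangle$ and $\langle\phi(w\gamma w^{-1})\rangle$ for a short explicit word $w$ in the generators $\alpha,\beta$; the word $w$ depends on whether $p=2$ or $p>2$ and corresponds geometrically to the path crossing the merged region from one $R$-endpoint of $\eta^*$ to the other.

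The hard step is to show that these two cyclic subgroups are actually distinct, i.e.~that $\phi(w)$ does not normalize $\langle\phi(\gamma)\rangle$. I would argue by contradiction. Since $r$ is prime, coincidence of the two cyclic subgroups would force $\phi(w)\in N_G(\langle\phi(\gamma)\rangle)$, and then the orbit-stabilizer theorem yields at least $|N_G(\langle\phi(\gamma)\rangle)|/r$ points of $V_R$ fixed by $\phi(\gamma)$. Combining this lower bound on $|\mathrm{Fix}(\phi(\gamma))|$ with the upper bound $2g+2$ from Lemma~\ref{lem:fixN}, together with the Riemann-Hurwitz relation that (via Proposition~\ref{pro:cases}) pins down $|G|$ in terms of $g$ for each admissible triple, is expected to yield a contradiction in every case where $r$ is prime.

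Finally, for the bonus case $(p,q,r)=(2,3,8)$ with $g>2$, where $r=8$ is not prime, Lemma~\ref{lem:noME} does not apply. I would instead invoke Lemma~\ref{lem:mulE}: a pair of multiple new $RR$-edges would produce a nontrivial central element of $H_{\eta^*}$ fixing every vertex of the component $\Lambda_{\eta^*}$. Estimating the number of such vertices from the orbit structure, together with $|G|=48(g-1)$ given by Riemann-Hurwitz for this triple, forces this fixed-point count to exceed the bound $2g+2$ of Lemma~\ref{lem:fixN} as soon as $g>2$, giving the required contradiction.
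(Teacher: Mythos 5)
Your reduction (no multiple edges suffices, and any multiple pair must consist of new edges with both endpoints in $V_R$) agrees with the paper, but the central step of your prime case has a genuine gap. Knowing only that $\phi(w)$ normalizes $C=\langle\phi(\gamma)\rangle$ gives $|N_G(C)|\geq 2r$ at best, so the orbit--stabilizer count you invoke yields merely $|\mathrm{Fix}(\phi(\gamma))\cap V_R|=|N_G(C)|/r\geq 2$, which can never contradict the bound $2g+2$ of Lemma~\ref{lem:fixN}. Even if you upgrade the edgewise coincidence to the global statement (by transitivity of $G$ on the new edges and connectedness of the graph of $RR$-edges, all stabilizers of points of $V_R$ would coincide, so one nontrivial element fixes all $|V_R|=|G|/r$ points), the inequality $|G|/r>2g+2$ simply fails for larger primes in the family $(2,3,r)$, which Proposition~\ref{pro:IndET} must cover for every prime $r\geq 7$: for instance a $(2,3,13)$-action of a group of order $1092$ (such as $\mathrm{PSL}(2,13)$) has $g=50$ and $|V_R|=84\leq 2g+2=102$. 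So a fixed-point count against Lemma~\ref{lem:fixN} cannot ``yield a contradiction in every case where $r$ is prime''; note the paper uses that count only in the non-prime case $(2,3,8)$, where $|V_R|=6(g-1)>2g+2$ for $g>2$ does hold, and for prime $r$ it argues instead from the fact that the stabilizers of at least $q\geq 3$ distinct points of $V_R$ would all coincide with $\langle h\rangle$. A workable repair is algebraic rather than numerical: if all stabilizers of points of $V_R$ coincide, then $\langle\phi(\gamma)\rangle$ is normal in $G$, so $G/\langle\phi(\gamma)\rangle$ is a quotient of $\langle\alpha,\beta\mid\alpha^p=\beta^q=\alpha\beta=1\rangle\cong\mathbb{Z}/\gcd(p,q)$ and $|G|\leq r\gcd(p,q)$, which is incompatible with $|G|>12(g-1)$, $|V_R|\geq q$, and the non-cyclicity of $G$.

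Two smaller points. Your $(2,3,8)$ argument is essentially the paper's and is fine, provided you actually show that the relevant component contains all of $V_R$ (the graph of new $RR$-edges is connected because it is the full preimage of one arc in the orbifold and the corresponding $H_\eta$ equals $G$, as in Proposition~\ref{pro:ETforLA}); otherwise ``the component $\Lambda_{\eta^*}$'' might a priori have fewer than $6(g-1)$ vertices. Also, since both endpoints of a new edge lie in the single orbit $V_R$, an element carrying one multiple edge to the other could interchange the endpoints, so Lemmas~\ref{lem:mulE}--\ref{lem:noME} cannot be quoted verbatim for these edges and need a short adaptation (the paper's own phrase ``similar to the proof of Lemma~\ref{lem:mulE}'' is doing this work).
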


\begin{proof}
We only prove the case of $\mathcal{T}_{\phi}(R)$. The case of $\mathcal{T}_{\phi}(Q)$ is similar.

By the construction of $\mathcal{T}_{\phi}(R)$ and Lemma~\ref{lem:PTtoT}, we only need to show that it has no multiple edges. Then it is easy to construct a $G$-equivariant triangulation from $\mathcal{T}_{\phi}(R)$ by using the hyperbolic geometry of $\Sigma_g$.

Assume that there exists a pair of multiple edges $\eta$ and $\eta'$ in $\mathcal{T}_{\phi}(R)$. Then their common vertices lie in $V_R$, since $\mathcal{T}_{\phi}$ has no multiple edges. Note that all the edges in $\mathcal{T}_{\phi}(R)$ with endpoints lying in $V_R$ form a connected graph. Similar to the proof of Lemma~\ref{lem:mulE}, we see that there exists an element $h$ in $\mathrm{Stab}(\partial\eta)$ so that $h(\eta)=\eta'$ and $h$ fixes all the vertices in $V_R$. However, this cannot happen if $r$ is prime, since by considering the edges adjacent to a vertex in $V_Q$, we have $|V_R|\geq q>2$.

If $(p,q,r)$ is $(2,3,8)$, then by Lemma~\ref{lem:fixN} and the Riemann-Hurwitz formula,
\[2g+2\geq |\mathrm{Fix}(h)|\geq |V_R|=\frac{|G|}{8}=\frac{48(g-1)}{8}.\]
Clearly this cannot happen if $g>2$. So we get the result.
\end{proof}

\begin{lemma}\label{lem:Upoint}
Let $(K,\tau)$ be a triangulation of $\Sigma_g$, and let $U\subset K^0$ be a subset so that $\mathrm{St}(v)\cap\mathrm{St}(w)=\mathrm{Lk}(v)\cap\mathrm{Lk}(w)$ for any $v\neq w$ in $U$, and $|\mathrm{Lk}(u)^0|\leq 4$ for each $u$ in $U$. Then $\mathrm{Lk}(v)^0\neq\mathrm{Lk}(w)^0$ for any $v\neq w$ in $U$, unless $g=0$ and $|U|=2$.
\end{lemma}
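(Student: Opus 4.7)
The plan is to assume for contradiction that there exist $v\neq w$ in $U$ with $W:=\mathrm{Lk}(v)^0=\mathrm{Lk}(w)^0$, and to deduce we must be in the exceptional case $g=0$, $|U|=2$. First observe that the hypothesis $\mathrm{St}(v)\cap\mathrm{St}(w)=\mathrm{Lk}(v)\cap\mathrm{Lk}(w)$ is equivalent to $v$ and $w$ being non-adjacent in $K$: if they were joined by an edge, then $v\in\mathrm{St}(v)\cap\mathrm{St}(w)$ while $v\notin\mathrm{Lk}(v)$, a contradiction. Since $\Sigma_g$ is a surface, each link is a combinatorial circle, so $|\mathrm{Lk}(u)^0|\geq 3$ for every $u$, and together with the hypothesis $|W|\in\{3,4\}$.

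Next I would argue that $\mathrm{Lk}(v)=\mathrm{Lk}(w)$ as subcomplexes, not merely as vertex sets. When $|W|=3$ there is only one cycle on three vertices, so this is automatic. When $|W|=4$, both are $4$-cycles on $W$; suppose for contradiction they were distinct. Set $L:=|\mathrm{St}(v)|\cup|\mathrm{St}(w)|$; then $L$ has $6$ vertices, $8$ triangles, and $14$ edges (the $8$ spoke edges at $v$ and $w$, plus the $4+4-2=6$ distinct edges of the two cycles, noting that any two distinct $4$-cycles on four vertices share exactly two edges). So $\chi(L)=0$. The boundary edges of $L$ are those lying in exactly one triangle, that is, the symmetric difference of the edge sets of $\mathrm{Lk}(v)$ and $\mathrm{Lk}(w)$; a direct check over the three $4$-cycles on four vertices shows this symmetric difference is always the third $4$-cycle, hence a single boundary circle. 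So $L$ is a compact surface with $\chi=0$ and one boundary component, forcing $L$ to be a M\"obius band, which contradicts the orientability of $\Sigma_g$.

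With $\mathrm{Lk}(v)=\mathrm{Lk}(w)$ established, $L$ is the union of two closed disks glued along their common boundary circle, so $L\cong S^2$. As a closed $2$-manifold contained in the connected closed $2$-manifold $\Sigma_g$, $L$ must coincide with $\Sigma_g$, giving $g=0$.

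Finally, any $u\in U\setminus\{v,w\}$ lies in $\Sigma_g=L$, hence in its vertex set $\{v,w\}\cup W$, so $u\in W$. Then $u$ is adjacent to $v$, contradicting the hypothesis $\mathrm{St}(u)\cap\mathrm{St}(v)=\mathrm{Lk}(u)\cap\mathrm{Lk}(v)$ as in the first paragraph. Hence $|U|=2$. The main obstacle is the orientability argument in the $|W|=4$ sub-case; the remaining steps are a clean gluing/covering argument, so I would pay the most care to verifying the symmetric-difference identity on $4$-cycles and the resulting Euler characteristic and boundary-component computation that identifies $L$ as a M\"obius band.
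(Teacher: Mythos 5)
Your proof is correct and follows essentially the same route as the paper's: in the $|\mathrm{Lk}|^0|=4$ case the paper likewise observes that $|\mathrm{St}(v)|\cup|\mathrm{St}(w)|$ is either all of $|K|$ or a M\"obius band and uses orientability to exclude the latter, concluding $g=0$ and $|U|=2$. You simply supply the details the paper leaves implicit (non-adjacency of $v,w$, the symmetric-difference and Euler-characteristic computation identifying the M\"obius band, and the final adjacency contradiction forcing $|U|=2$), all of which check out.
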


\begin{proof}
Assume that $\mathrm{Lk}(v)^0=\mathrm{Lk}(w)^0$ for some $v\neq w$ in $U$. And it has $s$ elements. Then $s=3$ or $s=4$. If $s=3$, then $|\mathrm{St}(v)|\cup|\mathrm{St}(w)|=|K|$. So, we get $|U|=2$ and $g=0$. If $s=4$, then $|\mathrm{St}(v)|\cup|\mathrm{St}(w)|$ is either $|K|$ or a M\"obius band. And we also get $|U|=2$ and $g=0$, since $\Sigma_g$ is orientable.
\end{proof}

\begin{proof}[Proof of Theorem~\ref{thm:SLBound}]
By Remark~\ref{rem:smallG}, $d_g(G)<12$ if $|G|\leq 3$. This inequality also holds if $|G|=4$, by using Theorem~\ref{thm:exist}. So one can assume that $|G|\geq 5$. Then, by Theorem~\ref{thm:GEBound}, one can further assume that $|G|>12(g-1)$. So there are four cases below, according to Proposition~\ref{pro:cases}. And in each case we will apply Theorem~\ref{thm:tri} to some suitable triangulations of $\Sigma_g$ to get the upper bounds.

Case 1: $(p,q,r)=(3,3,r)$, where $4\leq r\leq 5$. If $r=4$, then, by Proposition~\ref{pro:IndET}, $\mathcal{T}_{\phi}(Q)$ gives a $G$-equivariant triangulation $(K,\tau)$ of $\Sigma_g$. Now let $U=V_P\cup V_R$ and $V=V_Q$. By Lemma~\ref{lem:Upoint}, $K^0=U\cup V$ satisfies the conditions in Theorem~\ref{thm:tri}. So we have $d_g(G)\leq |V_Q|$. By the Riemann-Hurwitz formula, we see that
\[d_g(G)\leq |V_Q|=\frac{|G|}{3}=\frac{24(g-1)}{3}<12(g-1).\]
Similarly, if $r=5$, then $\mathcal{T}_{\phi}(R)$ gives a $G$-equivariant triangulation of $\Sigma_g$, where we let $U=V_P\cup V_Q$ and $V=V_R$. By Theorem~\ref{thm:tri}, we have
\[d_g(G)\leq |V_R|=\frac{|G|}{5}=\frac{15(g-1)}{5}<12(g-1).\]

Case 2: $(p,q,r)=(2,5,r)$, where $5\leq r\leq 7$. Now, by Proposition~\ref{pro:ETforLA}, $\mathcal{T}_{\phi}$ gives a $G$-equivariant triangulation $(K,\tau)$ of $\Sigma_g$. And we let $U=V_P$ and $V=V_Q\cup V_R$. Then, by Lemma~\ref{lem:Upoint}, $K^0=U\cup V$ satisfies the conditions in Theorem~\ref{thm:tri}. So
\[d_g(G)\leq |V_Q|+|V_R|=(\frac{1}{5}+\frac{1}{r})|G|=\frac{r+5}{5r}\times\frac{20r(g-1)}{3r-10}<12(g-1).\]

Case 3: $(p,q,r)=(2,4,r)$, where $5\leq r\leq 11$. If $r$ is $5$, $7$, or $11$, then $\mathcal{T}_{\phi}(R)$ will give a $G$-equivariant triangulation of $\Sigma_g$, by Proposition~\ref{pro:IndET}. Now we let $U=V_Q$ and $V=V_R$. By Lemma~\ref{lem:Upoint} and Theorem~\ref{thm:tri}, we have
\[d_g(G)\leq |V_R|=\frac{|G|}{r}=\frac{1}{r}\times\frac{8r(g-1)}{r-4}<12(g-1).\]
Otherwise, $r\geq 6$. By Proposition~\ref{pro:ETforLA}, if it is not the case when $(p,q,r)=(2,4,8)$ and $g=2$, then $\mathcal{T}_{\phi}$ gives a $G$-equivariant triangulation of $\Sigma_g$. As in Case~2, we let $U=V_P$ and $V=V_Q\cup V_R$. By Lemma~\ref{lem:Upoint} and Theorem~\ref{thm:tri},
\[d_g(G)\leq |V_Q|+|V_R|=(\frac{1}{4}+\frac{1}{r})|G|=\frac{r+4}{4r}\times\frac{8r(g-1)}{r-4}<12(g-1).\]
If $r=8$ and $g=2$, then there exists only one action, by Remark~\ref{rem:TwoE}. In this case the corresponding hyperbolic surface $\Sigma_g=\mathbb{H}^2/\ker(\phi)$ actually has isometry group $T_{2,3,8}/\ker(\phi)$, and by Remark~\ref{rem:238Act} below, the inequality still holds.

Case 4: $(p,q,r)=(2,3,r)$, where $r\geq 7$. If $r$ is prime, then by Proposition~\ref{pro:IndET}, $\mathcal{T}_{\phi}(R)$ gives a $G$-equivariant triangulation of $\Sigma_g$. As in Case~3, we let $U=V_Q$ and $V=V_R$. Then, by Lemma~\ref{lem:Upoint} and Theorem~\ref{thm:tri}, we have
\[d_g(G)\leq |V_R|=\frac{|G|}{r}=\frac{1}{r}\times\frac{12r(g-1)}{r-6}\leq 12(g-1).\]
The same argument also holds when $r=8$ and $g>2$. If $r=8$ and $g=2$, then, by Remark~\ref{rem:238Act} below, the action is unique, and the inequality still holds. Hence one can assume that $r\geq 9$. Then consider the $G$-equivariant triangulation of $\Sigma_g$ given by $\mathcal{T}_{\phi}(Q)$. Let $U=V_R$ and $V=V_Q$. If the conditions in Theorem~\ref{thm:tri} hold, then
\[d_g(G)\leq |V_Q|=\frac{|G|}{3}=\frac{1}{3}\times\frac{12r(g-1)}{r-6}\leq 12(g-1).\]
Otherwise, we can obtain a contradiction as follows.

Note that $\mathrm{St}(u)\cap\mathrm{St}(u')=\mathrm{Lk}(u)\cap\mathrm{Lk}(u')$ for any $u\neq u'$ in $U$, and $|\mathrm{St}(v)|$ gives a disk as in the left picture of Figure~\ref{fig:3uT} for each $v\in V$, where $u_1,u_2,u_3\in U$. If we have $u\neq u'$ in $U$ so that $\mathrm{Lk}(u)^0=\mathrm{Lk}(u')^0$, then $\mathrm{St}(u)$ and $\mathrm{St}(u')$ share a common edge, as in the picture, and there are $2$ choices of $u'$ for a fixed $u$. Let $h\in\mathrm{Stab}(u)$ be a $2\pi/r$-rotation about $u$. Then $h^2(u')=u'$, and we can see that $|U|=3$. So let $U=\{u_1,u_2,u_3\}$. The Riemann-Hurwitz formula implies that $r=4g+2$.

\begin{figure}[h]
\includegraphics{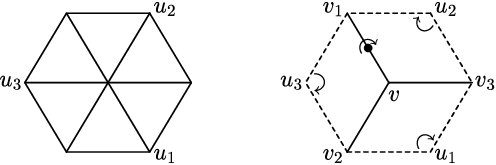}
\caption{The disk $|\mathrm{St}(v)|$ and the rotations about $u_j$.}\label{fig:3uT}
\end{figure}

Now, as in the right picture of Figure~\ref{fig:3uT}, the $\pi$-rotation interchanging $u_2$ and $u_3$ must give a $\pi$-rotation of $|\mathrm{St}(u_1)|$. Then we see that $v$ and $v_j$ are antipodal points in $|\mathrm{St}(u_j)|$ for $1\leq j\leq 3$. Let $h_j\in\mathrm{Stab}(u_j)$ denote the $2\pi/(2g+1)$-rotation about $u_j$ shown in the picture. Since $h_3(v_1)=v_2$ and $U\subseteq\mathrm{Fix}(h_3)$, we get $h_3=h_2^g=h_1^g$. Similarly, $h_2=h_1^g$, and so $g^2\equiv g \pmod{2g+1}$, which is impossible.
\end{proof}

\begin{remark}\label{rem:238Act}
By the result in \cite{Br} there exists exactly one action when $g=2$ and $(p,q,r)=(2,3,8)$. The group $G$ and the map $\phi:\pi_1(\Sigma_g/G,\overline{\star})\rightarrow G$ are given by
\[
\mathrm{GL}(2,\mathbb{Z}/3\mathbb{Z})=\langle x,y \mid x=
\begin{pmatrix}
1 & 1 \\
0 & -1 \\
\end{pmatrix}, y=
\begin{pmatrix}
0 & -1 \\
1 & -1 \\
\end{pmatrix}\rangle
\]
and $\alpha\mapsto x$, $\beta\mapsto y$, $\gamma\mapsto y^2x$, respectively. The hyperbolic surface $\Sigma_g=\mathbb{H}^2/\ker(\phi)$ is the same one in the case of $(2,4,8)$. The $(2,3,8)$-triangles can be obtained from those ones in the left picture of Figure~\ref{fig:238LB} by taking the barycentric subdivision. So we see that $\mathcal{T}_{\phi}(R)$ has multiple edges. Now $\mathcal{T}_{\phi}(Q)$ can give a triangulation so that the conditions in Theorem~\ref{thm:tri} hold, where $U=V_R$ and $V=V_Q$. However, we can only get $d_g(G)\leq |V|=16$ by the theorem, which is not good enough. In the right picture of Figure~\ref{fig:238LB},
we label the vertices in $V_Q$ by nonzero vectors in $(\mathbb{Z}/3\mathbb{Z})^2$ and signs in $\{+,-\}$. Then we can show that $d_g(G)\leq 8$ as follows.

\begin{figure}[h]
\includegraphics{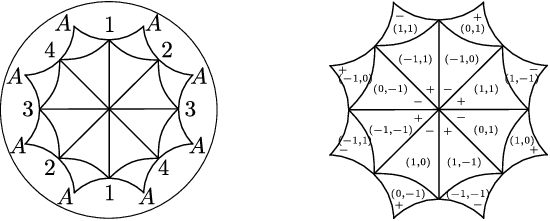}
\caption{The surface $\Sigma_2$ and the labels of vertices in $V_Q$.}\label{fig:238LB}
\end{figure}

Note that vertices in $V_Q$ are the barycenters of the triangles. By identifying $V_Q$ with $\Omega\times\{+,-\}$, where $\Omega=(\mathbb{Z}/3\mathbb{Z})^2\setminus\{(0,0)\}$, according to the picture, it is easy to check that the action of $\mathrm{GL}(2,\mathbb{Z}/3\mathbb{Z})$ on $V_Q$ can be given by
\begin{align*}
A: \Omega\times\{+,-\} & \rightarrow\Omega\times\{+,-\} \\
(\omega, \delta) & \mapsto (A\omega, \mathrm{sign}(\det A)\delta)
\end{align*}
for $A\in\mathrm{GL}(2,\mathbb{Z}/3\mathbb{Z})$, where the $\omega$ is written as column vectors. For example, now the element $y$ fixes $((1,-1),+)$ and gives a $2\pi/3$-rotation near the vertex. We can write $\Omega=\{\omega_1,\ldots,\omega_8\}$ and let $\hat{e}((\omega_j,+))=e_j$ and $\hat{e}((\omega_j,-))=-e_j$ for $1\leq j\leq 8$, then let $\hat{e}(u)=\sum\hat{e}(v)$ for each $u\in U$, where the sum is over all those $v\in\mathrm{Lk}(u)^0$, then as in the proof of Theorem~\ref{thm:tri}, we can get an embedding $\hat{e}:\Sigma_2\rightarrow\mathbb{R}^8$, which is $\mathrm{GL}(2,\mathbb{Z}/3\mathbb{Z})$-equivariant. By similar arguments as in the case of Theorem~\ref{thm:tri}, $\hat{e}$ can be replaced by a required smooth embedding. On the other hand, we can also verify $d_g(G)\leq 8$ by using Theorem~\ref{thm:exist}, based on the existence of $\hat{e}$.
\end{remark}

\begin{remark}
In the proof of Theorem~\ref{thm:SLBound}, we see that in many cases those upper bounds of $d_g(G)$ can be much smaller than $12(g-1)$. Actually, one can have even better upper bounds under certain conditions, by choosing suitable $U$-tessellations of $\Sigma_g$. The whole method can also be applied to those pairs $(\Sigma_g,G)$ related to the hyperbolic triangle groups with general types $(p,q,r)$.
\end{remark}


\section{The Riemann surfaces from principal congruence subgroups}\label{sec:RSfromPCS}
In Section~\ref{subsec:CCofET}, we will give a precise definition of $(\Sigma(p),\overline{\Gamma}_p)$ and the equivariant triangulation of $\Sigma(p)$ derived from the $(2,3,p)$ triangular tiling. In Section~\ref{subsec:EEinCR}, we will use the equivariant triangulation to construct smooth equivariant embeddings from $\Sigma(p)$ to $\mathbb{C}^{p+1}$ and $\mathbb{R}^{p+1}$. Then we will prove Theorem~\ref{thm:PCS} in Section~\ref{subsec:MEED}.

\subsection{Combinatorial construction of the equivariant triangulation}\label{subsec:CCofET}
We can identify the upper half-plane $\{z\in\mathbb{C}\mid\mathrm{Im}(z)>0\}$ with $\mathbb{H}^2$ for simplicity. Given an element $S$ in $\mathrm{SL}(2,\mathbb{Z})$, its image in $\mathrm{PSL}(2,\mathbb{Z})$ will be denoted by $\overline{S}$. And its image in $\overline{\Gamma}_p$ will be denoted by $\overline{S}_p$. The action of $\overline{S}$ on $\mathbb{H}^2$ comes from the map
\[S=
\begin{pmatrix}
a & b \\
c & d \\
\end{pmatrix}\mapsto f(z)=\frac{az+b}{cz+d}.
\]
It is well known that the $\mathrm{PSL}(2,\mathbb{Z})$-action on $\mathbb{H}^2$ gives an orbifold. It has an open disk as the underlying space. And it has two singular points, with indices $2$ and $3$, respectively. So its fundamental group is $\mathrm{PSL}(2,\mathbb{Z})\cong\mathbb{Z}/2\mathbb{Z}\ast\mathbb{Z}/3\mathbb{Z}$.

Let $I$ denote the identity matrix in $\mathrm{SL}(2,\mathbb{Z})$, and let $A$, $B$, $C$ be given by
\[
A=
\begin{pmatrix}
0 & -1 \\
1 & 0 \\
\end{pmatrix}, B=
\begin{pmatrix}
0 & 1 \\
-1 & 1 \\
\end{pmatrix}, C=
\begin{pmatrix}
1 & 1 \\
0 & 1 \\
\end{pmatrix}.
\]
Then $\overline{A}$ and $\overline{B}$ are those generators of $\mathrm{PSL}(2,\mathbb{Z})$ with orders $2$ and $3$, respectively. And so $\overline{A}_p$ and $\overline{B}_p$ generate $\overline{\Gamma}_p\cong\mathrm{PSL}(2,\mathbb{Z}/p\mathbb{Z})$. Note that $ABC=I$ and
\[C^p=
\begin{pmatrix}
1 & p \\
0 & 1 \\
\end{pmatrix}\equiv
\begin{pmatrix}
1 & 0 \\
0 & 1 \\
\end{pmatrix}\pmod{p}.
\]
Hence we have the following epimorphism which is injective on finite subgroups
\[\phi:\langle\alpha,\beta,\gamma \mid \alpha^2=\beta^3=\gamma^p=\alpha\beta\gamma=1\rangle\rightarrow\overline{\Gamma}_p,\]
where $\alpha\mapsto\overline{A}_p$, $\beta\mapsto\overline{B}_p$, $\gamma\mapsto\overline{C}_p$, respectively. This gives a covering space $\Sigma(p)$ of the orbifold $\mathbb{H}^2/T_{2,3,p}$, which is a closed Riemann surface with genus
\[g=\frac{1}{2}(1-\frac{1}{2}-\frac{1}{3}-\frac{1}{p})|\mathrm{PSL}(2,\mathbb{Z}/p\mathbb{Z})|+1= \frac{1}{24}(p^2-1)(p-6)+1.\]
Its automorphism group is isomorphic to $\overline{\Gamma}_p\cong\mathrm{PSL}(2,\mathbb{Z}/p\mathbb{Z})$.

Similarly, the epimorphism $\phi_0:\mathrm{PSL}(2,\mathbb{Z})=\langle\overline{A},\overline{B}\rangle\rightarrow\overline{\Gamma}_p$ gives a covering space of $\mathbb{H}^2/\mathrm{PSL}(2,\mathbb{Z})$, which is just $\mathbb{H}^2/\overline{\Gamma}(p)$. By comparing $\phi_0$ with $\phi$, we can see that topologically $\mathbb{H}^2/\overline{\Gamma}(p)$ is the same as $\Sigma(p)$ with $(p^2-1)/2$ points removed.

Now, as in Section~\ref{sec:ETandPT}, we have a pseudo-triangulation $\mathcal{T}_{\phi}$ of $\Sigma(p)$ which gives an equivariant triangulation. And the triangles in $\mathcal{T}_{\phi}$ are all geodesic ones. Note that every point in the preimage of the singular point in $\mathbb{H}^2/T_{2,3,p}$ of index $3$ lies in six triangles, whose union is an equilateral triangle with angle $2\pi/p$, since $\mathcal{T}_{\phi}$ contains no multiple edges. So we get a new pseudo-triangulation of $\Sigma(p)$, which is induced by all such equilateral triangles. And we denote it by $\mathcal{T}_p$. Since $p$ is prime, $\mathcal{T}_p$ also gives an equivariant triangulation of $\Sigma(p)$ as before. On the other hand, there is a combinatorial construction of this new triangulation as follows.

By identifying each element $(x,y)$ in $(\mathbb{Z}/p\mathbb{Z})^2\setminus\{(0,0)\}$ with $(-x,-y)$, we get a set $V_p$ so that $|V_p|=(p^2-1)/2$. We let $\overline{(x,y)}$ denote the image of $(x,y)$ in $V_p$ and we regard it as a vertex. We add an edge between the vertices $\overline{(x,y)}$ and $\overline{(x',y')}$ if and only if $xy'-x'y$ is $1$ or $-1$. If there are three edges connecting three vertices, then we further add a triangle. This gives a simplicial complex $K$.

One can check that $|K|$ is connected, and if there is an edge between $\overline{(x,y)}$ and $\overline{(x',y')}$, then there are only two vertices adjacent to both of them, which are given by $\overline{(x+x',y+y')}$ and $\overline{(x-x',y-y')}$, respectively. And around each vertex there are exactly $p$ triangles. So one can see that $|K|$ is a connected closed surface. The group $\overline{\Gamma}_p\cong\mathrm{PSL}(2,\mathbb{Z}/p\mathbb{Z})$ naturally acts on $V_p$ by the following equation
\[
\overline{S}_p\cdot
\overline{
\begin{pmatrix}
x \\
y \\
\end{pmatrix}}=
\overline{
\begin{pmatrix}
\overline{a} & \overline{b} \\
\overline{c} & \overline{d} \\
\end{pmatrix}
\begin{pmatrix}
x \\
y \\
\end{pmatrix}}=
\overline{
\begin{pmatrix}
\overline{a}x+\overline{b}y \\
\overline{c}x+\overline{d}y \\
\end{pmatrix}}
\]
where the image of an integer $m$ in $\mathbb{Z}/p\mathbb{Z}$ is denoted by $\overline{m}$. Since the edges can be regarded as matrices, with those vertices as columns, it is not hard to see that the $\overline{\Gamma}_p$-action on $V_p$ extends to $|K|$ via simplicial maps.

\begin{proposition}\label{pro:ETforSp}
There exists a homeomorphism $\tau:|K|\rightarrow\Sigma(p)$ so that $(|K|,\overline{\Gamma}_p)$ is conjugate to $(\Sigma(p),\overline{\Gamma}_p)$, while $K$ corresponds to $\mathcal{T}_p$, via $\tau$.
\end{proposition}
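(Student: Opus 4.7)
The plan is to reduce the statement to the classification of $\overline{\Gamma}_p$-actions recorded in Remark~\ref{rem:lift}: both pairs $(\Sigma(p),\overline{\Gamma}_p)$ and $(|K|,\overline{\Gamma}_p)$ are determined (up to equivariant homeomorphism) by their classifying maps $\pi_1(\cdot/\overline{\Gamma}_p) \to \overline{\Gamma}_p$. I will verify that the two classifying maps agree after a canonical identification of the quotient orbifolds, and then deduce that the resulting homeomorphism $\tau$ can be taken to identify $K$ with $\mathcal{T}_p$.

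First I verify that $|K|$ is a closed connected orientable surface on which $\overline{\Gamma}_p$ acts. Connectedness follows from the transitivity of $\overline{\Gamma}_p$ on $V_p$: given any $\overline{(x,y)}$, one can extend it to a basis of $(\mathbb{Z}/p\mathbb{Z})^2$ of determinant $1$ since $p$ is prime. The link of each vertex is a single $p$-cycle; for example, the $p$ neighbors of $\overline{(1,0)}$ are $\overline{(k,1)}$ for $k=0,\ldots,p-1$, and $\overline{(k,1)}$ is adjacent to $\overline{(k',1)}$ if and only if $k-k' \equiv \pm 1 \pmod{p}$. Hence $|K|$ is a closed $2$-manifold. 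A direct count gives $V=(p^2-1)/2$, $E=p(p^2-1)/4$, $F=p(p^2-1)/6$, so $\chi(|K|) = (p^2-1)(6-p)/12$, matching the Euler characteristic of $\Sigma(p)$.

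Next I identify the orbifold $|K|/\overline{\Gamma}_p$. Transitivity of $\overline{\Gamma}_p$ on vertices, edges, and triangles of $K$ is a direct calculation, and the stabilizers are $\langle \overline{C}_p \rangle$ at the vertex $\overline{(1,0)}$ (order $p$), $\langle \overline{A}_p \rangle$ at the edge $\{\overline{(1,0)},\overline{(0,1)}\}$ (order $2$, since $\overline{A}_p$ swaps these two vertices), and $\langle \overline{B}_p \rangle$ at the triangle $\{\overline{(1,0)},\overline{(0,1)},\overline{(1,1)}\}$ (order $3$, since $\overline{B}_p$ cyclically permutes the vertices). These exhaust the nontrivial isotropy, so $|K|/\overline{\Gamma}_p$ is a $2$-orbifold with three cone points of indices $2,3,p$, and the orbifold Euler characteristic equation forces its underlying space to be $S^2$. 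Hence $|K|/\overline{\Gamma}_p$ is orbifold-isomorphic to $\mathbb{H}^2/T_{2,3,p}$.

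Under this identification, the classifying map $\phi_K:\pi_1(|K|/\overline{\Gamma}_p,\overline{\star}_K) \to \overline{\Gamma}_p$ sends the standard generators $\alpha,\beta,\gamma$ (loops around the cone points of orders $2,3,p$) to elements that are a priori only conjugate to $\overline{A}_p,\overline{B}_p,\overline{C}_p$. The main obstacle is the basepoint-and-conjugation bookkeeping required to arrange $\phi_K = \phi$ on the nose: I would place $\overline{\star}_K$ in the interior of the triangle $\{\overline{(1,0)},\overline{(0,1)},\overline{(1,1)}\}$ and choose the loops $\alpha,\beta,\gamma$ to wind tightly around the midpoint of $\{\overline{(1,0)},\overline{(0,1)}\}$, the barycenter of the triangle, and the vertex $\overline{(1,0)}$ respectively, and use the incidence of these simplices together with the relation $\overline{A}_p\overline{B}_p\overline{C}_p = \overline{I}_p$ in $\overline{\Gamma}_p$ (which matches $\alpha\beta\gamma = 1$) to conclude the equality on generators. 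Once $\phi_K = \phi$, Lemma~\ref{lem:lift} with Remark~\ref{rem:lift} produces the required equivariant homeomorphism $\tau$, and $\tau(K) = \mathcal{T}_p$ follows because both $K$ and $\mathcal{T}_p$ are the unique equivariant lifts of the single-vertex, single-edge, single-triangle orbit structure on the common quotient orbifold with the stabilizers identified above.
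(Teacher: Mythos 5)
Your proposal is correct and follows essentially the same route as the paper: compute the vertex/edge/triangle stabilizers to identify $|K|/\overline{\Gamma}_p$ with $\mathbb{H}^2/T_{2,3,p}$, match the classifying epimorphism with $\phi$ (the paper is equally brief at this step, constructing an explicit $\overline{\tau}$ on a fundamental half-triangle and asserting the induced map is "essentially the same as $\phi$"), and then lift via Lemma~\ref{lem:lift} and Remark~\ref{rem:lift}. The only cosmetic difference is that you deduce the underlying space $S^2$ from the orbifold Euler characteristic, whereas the paper exhibits a fundamental domain directly.
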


\begin{proof}
We first show that the orbifolds $|K|/\overline{\Gamma}_p$ and $\mathbb{H}^2/T_{2,3,p}$ are the same. Let $\Delta$ be a triangle in $K$. One can assume that $\Delta$ has $\overline{(x,y)}$, $\overline{(x',y')}$, and $\overline{(x+x',y+y')}$ as its vertices, where $xy'-x'y=1$. There is also a triangle $\Delta_0$ in $K$ with vertices $\overline{(1,0)}$, $\overline{(0,1)}$, and $\overline{(1,1)}$. Let $S_p$ denote the matrix given by $x$, $x'$, $y$, $y'$. Since
\[
\begin{pmatrix}
x & x' \\
y & y' \\
\end{pmatrix}
\begin{pmatrix}
1 \\
0 \\
\end{pmatrix}=
\begin{pmatrix}
x \\
y \\
\end{pmatrix},
\begin{pmatrix}
x & x' \\
y & y' \\
\end{pmatrix}
\begin{pmatrix}
0 \\
1 \\
\end{pmatrix}=
\begin{pmatrix}
x' \\
y' \\
\end{pmatrix},
\]
the image of $S_p$ in $\mathrm{PSL}(2,\mathbb{Z}/p\mathbb{Z})$ gives an element $\overline{S}_p\in\overline{\Gamma}_p$ so that $\overline{S}_p(\Delta_0)=\Delta$. If we write $\overline{(0,1)}$ as $\overline{(0,-1)}$, then we see that $\overline{B}_p(\Delta_0)=\Delta_0$ and it is a $2\pi/3$-rotation on $\Delta_0$. Note that there are exactly $p$ elements in $\overline{\Gamma}_p$ which can fix $\overline{(1,0)}$, and they are rotations near $\overline{(1,0)}$. So only those powers of $\overline{B}_p$ keep $\Delta_0$ invariant. By taking the barycentric subdivision of $\Delta_0$, we see that the union of the two small triangles containing $\overline{(1,0)}$ is a fundamental domain, whose quotient gives $\mathbb{H}^2/T_{2,3,p}$.

So $|K|$ is orientable. By computing the Euler characteristic we see that $|K|$ has the same genus as $\Sigma(p)$. Now we construct a concrete map $\overline{\tau}:|K|/\overline{\Gamma}_p\rightarrow\mathbb{H}^2/T_{2,3,p}$ as follows. Note that $\overline{A}_p$ gives a $\pi$-rotation near the midpoint of the edge between $\overline{(1,0)}$ and $\overline{(0,1)}$. Let $\Delta_{\overline{\star}}\subset|K|/\overline{\Gamma}_p$ be the image of the small triangle that contains $\overline{(1,0)}$ and the midpoint. It has a basepoint $\overline{\star}$ lying in its interior. One can assume that $\Delta_{\overline{\ast}}\subset\mathbb{H}^2/T_{2,3,p}$ is the pseudo-triangle which has a basepoint $\overline{\ast}$ in the interior. Then we can have an embedding $\overline{\tau}$ so that $\overline{\tau}(\Delta_{\overline{\star}})=\Delta_{\overline{\ast}}$ and $\overline{\tau}(\overline{\star})=\overline{\ast}$. We see that essentially the map $\pi_1(|K|/\overline{\Gamma}_p,\overline{\star})\rightarrow\overline{\Gamma}_p$ is the same as $\phi$. Then by Lemma~\ref{lem:lift} and Remark~\ref{rem:lift}, the lift of $\overline{\tau}$ gives the required $\tau$.
\end{proof}

Figure~\ref{fig:TriG3} shows an example of the construction, where $p=7$ and elements in $V_7$ are just written as pairs of integers for simplicity. It is well known that $\Sigma(7)$ gives the Klein quartic, which is a Riemann surface of genus $3$. Similar to the two cases shown in Figure~\ref{fig:TriG2}, $\Sigma(7)$ can also be obtained from a regular $14$-gon by identifying certain pairs of its edges, as shown in the figure below.

\begin{figure}[h]
\includegraphics{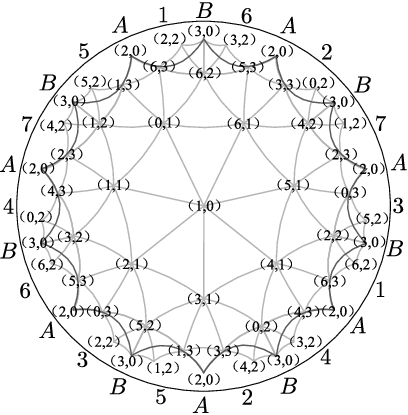}
\caption{The tiling of $\Sigma(7)$ given by $\mathcal{T}_7$ with labels in $V_7$.}\label{fig:TriG3}
\end{figure}

\begin{remark}\label{rem:Farey}
Similarly, by identifying $(s,t)$ with $(-s,-t)$ in the set of all the pairs of coprime integers, we have a set $V$, where the image of $(s,t)$ is denoted by $\overline{(s,t)}$. We regard $V$ as a vertex set and add an edge between $\overline{(s,t)}$ and $\overline{(s',t')}$ if and only if $st'-s't=\pm 1$. Then, by adding the triangles as before, we can have a simplicial complex $K^\ast$. Now each edge is still contained in exactly two triangles, but around each vertex those triangles form an infinite sequence. By the Euclidean algorithm, we can see that $|K^\ast|$ is connected. And the action of $\mathrm{PSL}(2,\mathbb{Z})$ on $V$ given by
\[
\overline{S}\cdot
\overline{
\begin{pmatrix}
s \\
t \\
\end{pmatrix}}=
\overline{
\begin{pmatrix}
a & b \\
c & d \\
\end{pmatrix}
\begin{pmatrix}
s \\
t \\
\end{pmatrix}}=
\overline{
\begin{pmatrix}
as+bt \\
cs+dt \\
\end{pmatrix}}
\]
extends to $|K^\ast|$ via simplicial maps. Similar to Proposition~\ref{pro:ETforSp}, one can show that the actions of $\mathrm{PSL}(2,\mathbb{Z})$ on $|K^\ast|\setminus V$ and $\mathbb{H}^2$ are essentially the same. Actually, by identifying $\overline{(s,t)}$ with $s/t\in\mathbb{Q}\cup\{\infty\}$, the $1$-skeleton in $|K^\ast|$ naturally corresponds to the famous Farey diagram which gives a $\mathrm{PSL}(2,\mathbb{Z})$-invariant ideal triangulation of $\mathbb{H}^2$. Then we can see that the simplicial map $|K^\ast|\rightarrow|K|$ induced by $\mathbb{Z}\rightarrow\mathbb{Z}/p\mathbb{Z}$ naturally corresponds to the covering map $\mathbb{H}^2\rightarrow\mathbb{H}^2/\overline{\Gamma}(p)$.
\end{remark}


\subsection{Equivariant embeddings in the complex and real spaces}\label{subsec:EEinCR}
Below those elements $\overline{(x,y)}$ in $V_p$ will be denoted by $[s:t]$ for simplicity, where $s,t\in\mathbb{Z}$ so that $\overline{s}=x$, $\overline{t}=y$. Hence we have $[s:t]=[s+p:t]=[s:t+p]$, $[s:t]=[-s:-t]$. Now let $r$ be a primitive root modulo $p$. So $\{\overline{r},\ldots,\overline{r}^{p-1}\}=\{\overline{1},\ldots,\overline{p-1}\}$, and we have $r^{(p-1)/2}\equiv -1\pmod{p}$. Then, we can list the elements in $V_p$ as follows. There are $p+1$ columns, corresponding to different ratios, and $(p-1)/2$ rows.
\[\begin{matrix}
[1:0] & [0:1] & [1:1] & \cdots & [p-1:1] \\
[r:0] & [0:r] & [r:r] & \cdots & [(p-1)r:r] \\
\vdots & \vdots & \vdots &  & \vdots \\
[r^{\frac{p-3}{2}}:0] & [0:r^{\frac{p-3}{2}}] & [r^{\frac{p-3}{2}}:r^{\frac{p-3}{2}}] & \cdots & [(p-1)r^{\frac{p-3}{2}}:r^{\frac{p-3}{2}}]\\
\end{matrix}\]
By the definition of the $\overline{\Gamma}_p$-action on $V_p$, for $\overline{S}_p\in\overline{\Gamma}_p$ and $[sr^q:tr^q]\in V_p$, we have
\[\overline{S}_p([sr^q:tr^q])=[(as+bt)r^q:(cs+dt)r^q].\]
So $\overline{S}_p$ permutes those $p+1$ columns and it permutes the elements in each column cyclically.
We can use the list to first construct an embedding $\hat{e}:|K|\rightarrow\mathbb{C}^{p+1}$.

We let $\{\eta_{\infty},\eta_0,\eta_1,\ldots,\eta_{p-1}\}$ denote the standard basis for $\mathbb{C}^{p+1}$, corresponding to the $p+1$ columns, and let $\omega=\mathrm{e}^{4\pi \mathrm{i}/(p-1)}$. Then, we let
$\hat{e}([sr^q:r^q])=\omega^q\eta_s$ and $\hat{e}([r^q:0])=\omega^q\eta_{\infty}$, where $0\leq s\leq p-1$ and $0\leq q\leq (p-3)/2$. This defines a map $V_p\rightarrow\mathbb{C}^{p+1}$, and its linear extension gives the map $\hat{e}$. Note that there are no edges between the vertices lying in the same column. So, it is clear that $\hat{e}$ is injective on any triangle in $|K|$, and as in the proof of Theorem~\ref{thm:tri}, we see that the embedded triangles in $\mathbb{C}^{p+1}$ only meet in common edges or common vertices. Then since $\hat{e}$ is injective on $V_p$, it is also injective on $|K|$. So $\hat{e}$ is an embedding.

Now by regarding $\mathbb{C}^{p+1}$ as $\mathbb{R}^{2p+2}$ we define a representation $\rho: \overline{\Gamma}_p\rightarrow\mathrm{O}(2p+2)$ as follows. For each $1\leq s\leq p-1$, let $s^\ast$ and $k_s$ denote the two unique integers so that $ss^\ast\equiv 1\pmod{p}$, $r^{k_s}\equiv s\pmod{p}$, and $1\leq s^\ast, k_s\leq p-1$. Then, let
\[\rho(\overline{A}_p)(\eta_{\infty})=\eta_0,\, \rho(\overline{A}_p)(\eta_0)=\eta_{\infty},\, \rho(\overline{A}_p)(\eta_s)=\omega^{k_s}\eta_{p-s^\ast},\, 1\leq s\leq p-1.\]
The relations provide a complex linear map $\rho(\overline{A}_p)$, which lies in $\mathrm{O}(2p+2)$. Since
\[\overline{A}_p([sr^q:r^q])=[-r^q:sr^q]=[-ss^{\ast}r^q:sr^q]=[(p-s^\ast)r^{q+k_s}:r^{q+k_s}],\]
it is easy to check that on $V_p$ we have $\hat{e}\circ\overline{A}_p=\rho(\overline{A}_p)\circ\hat{e}$. Similarly, let
\[\rho(\overline{C}_p)(\eta_{\infty})=\eta_{\infty},\, \rho(\overline{C}_p)(\eta_{p-1})=\eta_0,\, \rho(\overline{C}_p)(\eta_s)=\eta_{s+1},\, 0\leq s\leq p-2.\]
This gives an element $\rho(\overline{C}_p)\in\mathrm{O}(2p+2)$ satisfying $\hat{e}\circ\overline{C}_p=\rho(\overline{C}_p)\circ\hat{e}$ on $V_p$. Note that $\overline{A}_p$ and $\overline{C}_p$ generate $\overline{\Gamma}_p$, and the action of $\overline{\Gamma}_p$ on $V_p$ is faithful. So we have an embedding $\rho$. And $\hat{e}$ is a $\overline{\Gamma}_p$-equivariant embedding with respect to $\rho$.

\begin{proposition}\label{pro:EEofSp}
Let $\hat{e}$ and $\rho$ be as above. Then there is a smooth $\overline{\Gamma}_p$-equivariant embedding $e:\Sigma(p)\rightarrow\mathbb{C}^{p+1}$ with respect to $\rho$ so that $e(\Sigma(p))$ lies near $\hat{e}(|K|)$. Also, there exists a smooth $\overline{\Gamma}_p$-equivariant embedding $e_0:\Sigma(p)\rightarrow\mathbb{R}^{p+1}$.
\end{proposition}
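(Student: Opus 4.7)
The plan is to handle the complex and real cases separately. For the complex embedding, I will smooth the piecewise-linear map $\hat{e}$ in a $\overline{\Gamma}_p$-equivariant way near its singular locus, reusing the techniques from the proof of Theorem~\ref{thm:tri}. For the real embedding, I will exhibit an explicit representation $\rho_0:\overline{\Gamma}_p\to\mathrm{O}(p+1)$ and then invoke Theorem~\ref{thm:exist}.

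For the complex part, $\rho$ has already been shown to be an embedding and to satisfy $\hat{e}\circ h=\rho(h)\circ\hat{e}$ on $V_p$ for the generators, so $\hat{e}$ is $\overline{\Gamma}_p$-equivariant. The singular set of $\hat{e}(|K|)$ consists of images of vertices in $V_R=K^0$ and images of open edges (whose midpoints lie in $V_P$); the centers of the equilateral triangles (in $V_Q$) lie in triangle interiors, where $\hat{e}$ is smooth and automatically $\rho$-equivariant, because each triangle image is a $2$-plane on which the corresponding conjugate of $\rho(\overline{B}_p)$ acts as a $2\pi/3$-rotation by virtue of cyclically permuting the three vertex images. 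I would then check the eigenvalue condition at the remaining singular points. At a vertex in $V_R$, $\rho(\overline{C}_p)$ cyclically permutes $\eta_0,\dots,\eta_{p-1}$ while fixing $\eta_\infty$, so every $p$-th root of unity appears as an eigenvalue, yielding a $\rho(\overline{C}_p)$-invariant plane on which it acts as a $2\pi/p$-rotation. At an edge midpoint in $V_P$, $\rho(\overline{A}_p)$ interchanges $\eta_\infty$ with $\eta_0$ and pairs the remaining basis vectors up to a phase, so $-1$ occurs with multiplicity at least $(p+1)/2$ and a $\rho(\overline{A}_p)$-invariant $2$-plane on which the action is a $\pi$-rotation is available. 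With these ingredients in place and the ambient real dimension $2(p+1)\geq 16$ well above $4$, the local modifications of Cases~1 and~2 in the proof of Theorem~\ref{thm:tri} (with $k=p$ in the vertex case) apply directly: one replaces the piecewise-linear cone at a vertex, or cylinder near an edge, by a smooth disk or annulus inside a small $\rho$-equivariant ball, using the invariant plane just produced. The resulting $e$ is smooth, $\overline{\Gamma}_p$-equivariant, and lies in a tubular neighborhood of $\hat{e}(|K|)$.

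For the real part, I take $\rho_0:\overline{\Gamma}_p\cong\mathrm{PSL}(2,\mathbb{Z}/p\mathbb{Z})\to\mathrm{O}(p+1)$ to be the permutation representation arising from the natural $2$-transitive action on $\mathbb{P}^1(\mathbb{Z}/p\mathbb{Z})$, whose $p+1$ points correspond bijectively to the columns used to organize $V_p$ (Appendix~\ref{app:RR} will record an explicit such representation). To apply Theorem~\ref{thm:exist}, I verify the three conditions in turn. Any nontrivial $h\in\mathrm{PSL}(2,\mathbb{Z}/p\mathbb{Z})$ has at most two fixed points on $\mathbb{P}^1(\mathbb{Z}/p\mathbb{Z})$, so $\mathrm{Fix}(\rho_0(h))$ has codimension at least $(p-1)/2>2$ for $p\geq 7$, giving the \emph{codimension condition}. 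The \emph{eigenvalue condition} holds because for $h$ of order $m\in\{2,3,p\}$ the $\geq p-1$ non-fixed points decompose into cycles whose lengths divide $m$, and at least one has length exactly $m$, producing the required primitive $m$-th roots of unity as eigenvalues of $\rho_0(h)$. The \emph{dimension inequality} requires that for any $\mathrm{Stab}(x)<H\leq\overline{\Gamma}_p$ with $\mathrm{Stab}(x)$ cyclic of order $2$, $3$, or $p$, the number of $H$-orbits on $\mathbb{P}^1(\mathbb{Z}/p\mathbb{Z})$ is strictly less than the number of $\mathrm{Stab}(x)$-orbits; this reduces to finding some $h\in H\setminus\mathrm{Stab}(x)$ moving a fixed point of $\mathrm{Stab}(x)$, which is immediate from $2$-transitivity unless $H$ stabilizes the fixed set of $\mathrm{Stab}(x)$ setwise, in which case a direct orbit count on the complementary $\mathrm{Stab}(x)$-free locus settles the remaining instances. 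Since $\overline{\Gamma}_p$ has no global fixed point on $\Sigma(p)$, the extra clause of the dimension inequality is vacuous.

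The main obstacle I anticipate is the dimension inequality for $\rho_0$: the subgroups $H$ that normalize a cyclic $\mathrm{Stab}(x)$---the dihedral subgroups of order $p\pm 1$ containing an involution and the Borel subgroup of order $p(p-1)/2$ containing the order-$p$ stabilizer---require explicit orbit counts, using the known structure of the maximal subgroups of $\mathrm{PSL}(2,\mathbb{Z}/p\mathbb{Z})$, to confirm the strict decrease in orbit numbers. A secondary subtlety, on the complex side, is coordinating the choices of $\rho$-invariant $2$-planes used for smoothing at a vertex $v\in V_R$ and at each of its incident edge midpoints so that the combined modifications are globally $\overline{\Gamma}_p$-equivariant and yield an embedding rather than merely an immersion.
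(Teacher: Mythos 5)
Your treatment of the complex embedding is correct and is essentially the paper's argument: reduce to the singular points in $\hat{e}(|K^1|)$, note that $\rho(\overline{A}_p)$ interchanges $\{\eta_\infty,\eta_0\}$ and $\{\eta_1,\eta_{p-1}\}$ at an edge midpoint, that $\eta_0,\ldots,\eta_{p-1}$ form a single $\rho(\overline{C}_p)$-orbit at a vertex, and apply Cases~1 and~2 of the proof of Theorem~\ref{thm:tri}.

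The real part, however, has a fatal gap: the representation you choose is the wrong one. The permutation representation of $\mathrm{PSL}(2,\mathbb{Z}/p\mathbb{Z})$ on $\mathbb{P}^1(\mathbb{Z}/p\mathbb{Z})$ does \emph{not} satisfy the dimension inequality, and this is not a matter of a missing orbit count --- the count comes out the wrong way. Take $x$ a fixed point of $\overline{C}_p$, so $\mathrm{Stab}(x)=\langle\overline{C}_p\rangle$, and let $H$ be the Borel subgroup (the normalizer of $\langle\overline{C}_p\rangle$, of order $p(p-1)/2$). For a permutation representation, $\dim\mathrm{Fix}$ is the number of orbits, and both $\langle\overline{C}_p\rangle$ and $H$ have exactly two orbits on $\mathbb{P}^1(\mathbb{Z}/p\mathbb{Z})$, namely $\{\infty\}$ and the affine line. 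So $\dim\mathrm{Fix}(\rho_0(\mathrm{Stab}(x)))=\dim\mathrm{Fix}(\rho_0(H))=2$, violating the strict inequality. By Lemma~\ref{lem:nec} this is not a technicality one can argue around: every point of $\mathrm{Fix}(\rho_0(\overline{C}_p))=\mathrm{span}\{\eta_\infty,\sum_s\eta_s\}$ is fixed by the entire Borel subgroup, whereas $\Sigma(p)$ contains $(p-1)/2\geq 3$ points whose stabilizer is exactly $\langle\overline{C}_p\rangle$ and which would have to land in that plane; hence no equivariant embedding exists with respect to the permutation representation. (Representation-theoretically, the permutation representation is reducible, namely trivial plus Steinberg, while the representation the proposition needs is a real form of the \emph{irreducible} monomial representation $\rho_{\mathbb{C}}$ carrying the phases $\omega^{k_s}$; Appendix~\ref{app:RR} constructs a real form of the latter, not of the former.)

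The paper's route avoids your anticipated subgroup-by-subgroup verification entirely: it shows (in Appendix~\ref{app:RR}) that $\rho_{\mathbb{C}}$ is conjugate to a real representation $\rho_0:\overline{\Gamma}_p\to\mathrm{O}(p+1)$, so that $\rho\cong\rho_0\oplus\rho_0$ as real representations; then $\dim\mathrm{Fix}(\rho(H))=2\dim\mathrm{Fix}(\rho_0(H))$ and the eigenvalues of $\rho(\overline{S}_p)$ are two copies of those of $\rho_0(\overline{S}_p)$, so the dimension inequality and eigenvalue condition for $\rho_0$ follow from those for $\rho$, which hold by Lemma~\ref{lem:nec} applied to the complex embedding $e$ you have already built. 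Only the codimension condition requires a separate argument, which the paper carries out by bounding $\dim_{\mathbb{C}}\mathrm{Fix}(\rho_{\mathbb{C}}(h))$ by the number of cycles of the induced permutation of $\{\infty,0,\ldots,p-1\}$. To repair your proof you must replace the permutation representation by this real form of $\rho_{\mathbb{C}}$ and supply the reality argument (Frobenius--Schur indicator or an explicit conjugating matrix), which is a genuinely nontrivial missing ingredient.
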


\begin{proof}
By Proposition~\ref{pro:ETforSp}, we can identify $\Sigma(p)$ with $|K|$ via $\tau$. As in the proof of Theorem~\ref{thm:tri}, we need to modify $\hat{e}$ to give the required $e$. Then by Lemma~\ref{lem:lift}, we can require that those points in $\Sigma(p)$ where $\hat{e}$ is not smooth correspond to vertices and edges of the triangles in $\hat{e}(|K|)$, and it suffices to show that $\hat{e}$ can be modified so that it is smooth near the singular points in $\hat{e}(|K^1|)$. Let $y$ be such a point and let $\rho(h)$ be a generator of $\mathrm{Stab}(y)$, where $h\in\overline{\Gamma}_p$. Now there are two cases.

If $y$ is the midpoint of an edge, then one can assume that this edge has $\eta_{\infty}$ and $\eta_0$ as the endpoints, and $h$ is $\overline{A}_p$. Then, the other two vertices of the two triangles containing $y$ are $\eta_1$ and $\eta_{p-1}$, which are also interchanged by $\rho(\overline{A}_p)$. Hence we see that the method in Case~1 in the proof of Theorem~\ref{thm:tri} works. Otherwise, one can assume that $y$ is the vertex $\eta_{\infty}$, and $h$ is $\overline{C}_p$. In this case, the other vertices of the triangles containing $y$ are $\eta_0,\ldots,\eta_{p-1}$, which form one $\rho(\overline{C}_p)$-orbit with $p\geq 7$. So we see that the method in Case~2 in the proof of Theorem~\ref{thm:tri} works. Hence $\hat{e}$ can be modified so that it is smooth near $\hat{e}^{-1}(y)$, and we can get the required $e$.

To get the required $e_0$, we will apply Theorem~\ref{thm:exist}. First note that essentially $\rho$ is a $(p+1)$-dimensional complex representation, and the representation is actually conjugate to a real representation $\rho_0:\overline{\Gamma}_p\rightarrow\mathrm{O}(p+1)$. In Appendix~\ref{app:RR} we will give a proof of this fact and write down $\rho_0$ explicitly. So the two real representations $\rho$ and the direct sum $\rho_0\oplus\rho_0$ are conjugate. Then, with this property, one can check the three conditions in Theorem~\ref{thm:exist} for $\rho_0$ as follows.

Since $e:\Sigma(p)\rightarrow\mathbb{C}^{p+1}$ is a smooth $\overline{\Gamma}_p$-equivariant embedding with respect to $\rho$, by Lemma~\ref{lem:nec}, $\rho$ satisfies the dimension inequality and eigenvalue condition. Note that for each element $\overline{S}_p$ in $\overline{\Gamma}_p$, the eigenvalues of $\rho(\overline{S}_p)$, with multiplicity, are two copies of the eigenvalues of $\rho_0(\overline{S}_p)$, and for each subgroup $H$ of $\overline{\Gamma}_p$, we have
\[\mathrm{dim}\mathrm{Fix}(\rho(H))=2\mathrm{dim}\mathrm{Fix}(\rho_0(H)).\]
Clearly $\rho_0$ also satisfies the dimension inequality. Since $\rho(\overline{A}_p)$ interchanges each of $\{\eta_{\infty},\eta_0\}$ and $\{\eta_1,\eta_{p-1}\}$, we see that $\rho_0(\overline{A}_p)$ has $-1$ as a multiple eigenvalue. And then it is easy to see that $\rho_0$ satisfies the eigenvalue condition as well.

Since $p\geq 7$, to check the codimension condition, we only need to show that the inequality $\mathrm{dim}\mathrm{Fix}(\rho_0(\overline{S}_p))<p-1$ holds for any nontrivial $\overline{S}_p\in\overline{\Gamma}_p$. Let $\rho_{\mathbb{C}}$ denote the complex representation corresponding to $\rho$. Then, $\rho_{\mathbb{C}}$ induces permutations on $\{\infty,0,1,\ldots,p-1\}$, and so gives a new representation $\rho_{\mathbb{R}}:\overline{\Gamma}_p\rightarrow\mathrm{O}(p+1)$. And
\[\mathrm{dim}_{\mathbb{C}}\mathrm{Fix}(\rho_{\mathbb{C}}(h))= \frac{1}{m}\sum_{j=1}^m\mathrm{tr}(\rho_{\mathbb{C}}(h^j))\leq \frac{1}{m}\sum_{j=1}^m\mathrm{tr}(\rho_{\mathbb{R}}(h^j))=l,\]
where $h\in\overline{\Gamma}_p$ has order $m$, and the permutation induced by $\rho_{\mathbb{C}}(h)$ has $l$ orbits. So we have $\mathrm{dim}\mathrm{Fix}(\rho_0(h))=\mathrm{dim}_{\mathbb{C}}\mathrm{Fix}(\rho_{\mathbb{C}}(h))\leq l$.

If $\mathrm{dim}\mathrm{Fix}(\rho_0(\overline{S}_p))\geq p-1$, then the corresponding $l$ is $p+1$, $p$, or $p-1$. Hence the permutation induced by $\rho_{\mathbb{C}}(\overline{S}_p)$ has four possibilities, up to conjugation. Since now $\mathrm{dim}_{\mathbb{C}}\mathrm{Fix}(\rho_{\mathbb{C}}(\overline{S}_p))\geq p-1$, it is not hard to see that $\rho_{\mathbb{C}}(\overline{S}_p)$ fixes at least $p-3$ elements in $\{\eta_{\infty},\eta_0,\eta_1,\ldots,\eta_{p-1}\}$. Then since $p\geq 7$, $\overline{S}_p$ fixes an edge in $|K|$. So it is the identity, and the codimension condition holds.
\end{proof}

\begin{remark}\label{rem:explic}
Note that we have written down $\rho_{\mathbb{C}}$ explicitly. Hence it is possible to verify conditions in Theorem~\ref{thm:exist} and prove Proposition~\ref{pro:EEofSp} directly. However, this will need a lot of computations. Also, after we giving $\rho_0$ explicitly in Appendix~\ref{app:RR}, it is possible to provide a more precise construction of the required $e_0$.
\end{remark}


\subsection{The minimal equivariant embedding dimension}\label{subsec:MEED}
Now, by investigating certain properties of the element $\overline{C}_p\in\overline{\Gamma}_p\cong\mathrm{PSL}(2,\mathbb{Z}/p\mathbb{Z})$, we can finish the proof of Theorem~\ref{thm:PCS}. First note that $\overline{C}_p$ fixes exactly $(p-1)/2$ points in $V_p$, namely
\[[1:0], [2:0], \ldots, [\frac{p-1}{2}:0].\]
It is easy to see that there are exactly $p$ vertices adjacent to $[s:0]$ in $K$, given by
\[[0:s^\ast], [s:s^\ast], \ldots, [(p-1)s:s^\ast].\]
By definition, the action of $\overline{C}_p$ on these $p$ points is given by
\[\overline{C}_p([js:s^\ast])=[js+s^\ast:s^\ast]=[(j+(s^\ast)^2)s:s^\ast], \,0\leq j\leq p-1.\]
So the action of $\overline{C}_p$ on $|K|$ gives a $2\pi(s^\ast)^2/p$-rotation near $[s:0]$.

\begin{proposition}\label{pro:4k3}
If $p\equiv 3\pmod{4}$, then $d_g(G)\geq p+1$ for the pair $(\Sigma(p),\overline{\Gamma}_p)$.
\end{proposition}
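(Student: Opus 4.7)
My plan is to apply the eigenvalue condition of Lemma~\ref{lem:nec} to the rotation $\overline{C}_p$, exploiting the hypothesis $p\equiv 3\pmod{4}$ in two complementary ways. Suppose $e\co\Sigma(p)\to\mathbb{R}^n$ is a smooth $\overline{\Gamma}_p$-equivariant embedding with respect to $\rho\co\overline{\Gamma}_p\to\mathrm{O}(n)$. Since $\rho(\overline{C}_p)$ is an element of $\mathrm{O}(n)$ of order $p$, I would decompose
\[
n=m_0+2\sum_{k=1}^{(p-1)/2}m_k,
\]
where $m_0=\dim\mathrm{Fix}(\rho(\overline{C}_p))$ and $m_k$ is the multiplicity of the conjugate pair $\{e^{\pm 2\pi\mathrm{i}k/p}\}$ in the spectrum of $\rho(\overline{C}_p)$. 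The target inequality $n\geq p+1$ then follows as soon as one proves both $m_k\geq 1$ for every $1\leq k\leq (p-1)/2$ and $m_0\geq 2$.

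For the first bound, recall that $\mathrm{Fix}(\overline{C}_p)=\{[s:0]:1\leq s\leq (p-1)/2\}$ and that near $[s:0]$ the element $\overline{C}_p$ acts as a $2\pi(s^\ast)^2/p$-rotation, as established just before the proposition. The eigenvalue condition thus forces each $e^{\pm 2\pi\mathrm{i}(s^\ast)^2/p}$ to appear as an eigenvalue of $\rho(\overline{C}_p)$. As $s$ runs through $\{1,\ldots,(p-1)/2\}$, the residues $(s^\ast)^2$ exhaust the nonzero quadratic residues modulo $p$. When $p\equiv 3\pmod{4}$ the element $-1$ is a non-residue, so the set $\{\pm q\bmod p:q\text{ is a QR}\}$ is all of $\{1,\ldots,p-1\}$, and hence every primitive $p$-th root of unity arises, giving $m_k\geq 1$. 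For the second bound I would introduce the diagonal element $\overline{D}_p\in\overline{\Gamma}_p$ coming from $\mathrm{diag}(r,r^{-1})\in\mathrm{SL}(2,\mathbb{Z}/p\mathbb{Z})$, where $r$ is a primitive root modulo $p$. A direct computation yields $\overline{D}_p\overline{C}_p\overline{D}_p^{-1}=\overline{C}_p^{r^2}$, so $\rho(\overline{D}_p)$ preserves $\mathrm{Fix}(\rho(\overline{C}_p))$; and $\overline{D}_p$ sends $[s:0]$ to $[rs:0]$, thereby cyclically permuting the $(p-1)/2$ fixed points of $\overline{C}_p$ in a single orbit.

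The crucial arithmetic observation is that the order of $\overline{D}_p$ in $\overline{\Gamma}_p$ is $(p-1)/2$, and this number is odd precisely because $p\equiv 3\pmod{4}$. If one had $m_0\leq 1$, then $\rho(\overline{D}_p)$ restricted to $\mathrm{Fix}(\rho(\overline{C}_p))$ would be an orthogonal map on a space of dimension at most $1$, hence of order dividing $2$; since $\rho$ is injective and $(p-1)/2$ is odd, this restriction would have to be the identity. But then $e([rs:0])=\rho(\overline{D}_p)(e([s:0]))=e([s:0])$ would contradict the injectivity of $e$, because $r\not\equiv\pm 1\pmod{p}$ implies $[rs:0]\neq[s:0]$. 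Therefore $m_0\geq 2$, completing the argument. The main obstacle in this plan is precisely this $m_0\geq 2$ step: without the coordinated use of $p\equiv 3\pmod{4}$ at both the eigenvalue level (to cover all primitive roots via QR/non-residue complementation) and at the normalizer level (to force $(p-1)/2$ to be odd), one cannot rule out the possibility that the fix-space of $\rho(\overline{C}_p)$ is a single line on which $\overline{D}_p$ acts by $-1$.
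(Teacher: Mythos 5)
Your proof is correct, and its main part coincides with the paper's: both apply the eigenvalue condition of Lemma~\ref{lem:nec} to $\overline{C}_p$ at its $(p-1)/2$ fixed points $[s:0]$, where it acts as a $2\pi(s^\ast)^2/p$-rotation, and both use that for $p\equiv 3\pmod 4$ the sets of quadratic residues and their negatives partition $\{1,\ldots,p-1\}$, so that all $p-1$ primitive $p$-th roots of unity occur as eigenvalues of $\rho(\overline{C}_p)$. The only genuine divergence is the step $\dim\mathrm{Fix}(\rho(\overline{C}_p))\geq 2$. The paper observes that the $(p-1)/2\geq 3$ fixed points of $\overline{C}_p$ lie in a single $\overline{\Gamma}_p$-orbit, so their images under $e$ have equal Euclidean norm and hence cannot all sit in a line through the origin; this argument is independent of the congruence class of $p$ and is reused verbatim for $p\equiv 1\pmod 4$ in Proposition~\ref{pro:4k1}. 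You instead bring in the diagonal element $\overline{D}_p$ normalizing $\langle\overline{C}_p\rangle$: since its order $(p-1)/2$ is odd exactly when $p\equiv 3\pmod 4$, its restriction to a fix-space of dimension at most one would be the identity, contradicting injectivity of $e$ because $[rs:0]\neq[s:0]$. This is valid (the key facts $\overline{D}_p\overline{C}_p\overline{D}_p^{-1}=\overline{C}_p^{r^2}$ and $\mathrm{ord}(\overline{D}_p)=(p-1)/2$ both check out), but it spends the hypothesis $p\equiv 3\pmod 4$ a second time where the paper's equal-norm argument gets the same conclusion for free; one small cosmetic point is that the injectivity of $\rho$ plays no role in your order computation, only the relation $\overline{D}_p^{(p-1)/2}=1$ does.
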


\begin{proof}
Suppose that there is a $\overline{\Gamma}_p$-equivariant embedding $\Sigma(p)\rightarrow\mathbb{R}^n$, with respect to an embedding $\rho:\overline{\Gamma}_p\rightarrow\mathrm{O}(n)$. Then, by Proposition~\ref{pro:ETforSp} and Lemma~\ref{lem:nec}, we see that $\rho(\overline{C}_p)$ has eigenvalues $\mathrm{e}^{\pm2\pi(s^\ast)^2\mathrm{i}/p}$ for each $1\leq s\leq (p-1)/2$. These numbers are all different, since $p\equiv 3\pmod{4}$. Since those $(p-1)/2\geq 3$ fixed points of $\overline{C}_p$ lie in the same orbit of the $\overline{\Gamma}_p$-action, $\mathrm{dim}\mathrm{Fix}(\rho(\overline{C}_p))\geq 2$. So $n\geq p+1$.
\end{proof}

The above argument does not hold when $p\equiv 1\pmod{4}$, because there are only $(p-1)/2$ different eigenvalues. In this case, we need a little representation theory. It is well known that the dimension of any irreducible complex representation $\rho$ of $\mathrm{PSL}(2,\mathbb{Z}/p\mathbb{Z})$ is $1$, $(p+1)/2$, $p-1$, $p$, or $p+1$, where $p\equiv 1\pmod{4}$. The powers of $\overline{C}_p$ other than $\overline{I}_p$ lie in two conjugacy classes. Each class contains half of them, and has the representative $\overline{C}_p$ or $(\overline{C}_p)^r$, where $r$ is the primitive root modulo $p$. If $\rho$ has dimension $(p+1)/2$, then $\mathrm{tr}(\rho(\overline{C}_p))+\mathrm{tr}(\rho((\overline{C}_p)^r))=1$. For the other cases, the value of $\mathrm{tr}(\rho(\overline{C}_p))+\mathrm{tr}(\rho((\overline{C}_p)^r))$ is $2$, $-2$, or $0$. See \cite[$\S 5.2$]{FH}.

\begin{proposition}\label{pro:4k1}
If $p\equiv 1\pmod{4}$, then $d_g(G)\geq p+1$ for the pair $(\Sigma(p),\overline{\Gamma}_p)$.
\end{proposition}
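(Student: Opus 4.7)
The plan is to assume, for contradiction, that there exist an embedding $\rho\colon\overline{\Gamma}_p\to\mathrm{O}(n)$ and a smooth equivariant embedding $\Sigma(p)\to\mathbb{R}^n$ with $n\leq p$, and then to derive a contradiction via Lemma~\ref{lem:nec}. Let $B$ be the Borel subgroup of $G=\overline{\Gamma}_p\cong\mathrm{PSL}(2,\mathbb{Z}/p\mathbb{Z})$ normalizing the Sylow $p$-subgroup $\langle\overline{C}_p\rangle$ (so $B=\langle\overline{C}_p\rangle\rtimes T$ with $T$ cyclic of order $(p-1)/2$), and let $\alpha_0$ be the unique character of $T$ of order $2$. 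Recall from the text preceding the statement that the complex irreducibles of $G$ have dimensions $1,(p+1)/2,p-1,p,p+1$, with two irreducibles $\rho_\pm$ of dimension $(p+1)/2$ arising as the two summands of $\mathrm{Ind}_B^G\alpha_0$.

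The first step is to translate the eigenvalue condition into a constraint on the complex decomposition $\rho_{\mathbb{C}}=\rho\otimes\mathbb{C}$. Since $\overline{C}_p$ acts as a $2\pi(s^\ast)^2/p$-rotation at each fixed vertex $[s:0]$, and $(s^\ast)^2$ runs over all nonzero quadratic residues modulo $p$, Lemma~\ref{lem:nec} forces the eigenvalue $\zeta^q$ (with $\zeta=e^{2\pi i/p}$) to occur in $\rho(\overline{C}_p)$ for every nonzero quadratic residue $q$. Using character orthogonality on $\langle\overline{C}_p\rangle\cong\mathbb{Z}/p$ together with the trace values listed before the statement and the Gauss sum $\sum_{q\in\mathrm{QR}}\zeta^q=(-1+\sqrt p)/2$ (valid because $-1$ is a square modulo $p$), a direct calculation yields the multiplicity of $\zeta^q$ in each irreducible restricted to $\langle\overline{C}_p\rangle$: for $q$ a nonzero quadratic residue this multiplicity is $0$ for the trivial representation and $\rho_-$, and $1$ for every irreducible of dimension $p$, $p+1$, or $p-1$, as well as for $\rho_+$. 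Therefore the eigenvalue condition becomes
\[
n_p+n_{p+1}+n_{p-1}+n_+\ \geq\ 1,
\]
where $n_\bullet$ is the total multiplicity in $\rho_{\mathbb{C}}$ of the irreducibles of the indicated dimension (and $n_+$ counts $\rho_+$ only). Combined with $n\leq p$ and the dimension formula, only four types of decomposition survive: (a) a single dimension-$p$ irreducible, (b) a single dimension-$(p-1)$ irreducible together with at most one trivial summand, (c) $\rho_-\oplus(\text{trivial})^k$, and (d) $\rho_+\oplus(\text{trivial})^k$ for $0\leq k\leq(p-1)/2$.

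Case (c) violates the eigenvalue condition. For case (b), the dimension-$(p-1)$ irreducible contributes $0$ to the trivial character upon restriction to $\langle\overline{C}_p\rangle$, so $\dim\mathrm{Fix}(\rho(\langle\overline{C}_p\rangle))=n_{\mathrm{triv}}$, and the dimension inequality applied to $H=G$ reduces to $n_{\mathrm{triv}}>n_{\mathrm{triv}}$. For case (a), the dimension-$p$ irreducible is the Steinberg $\mathrm{St}$, and the identity $\mathrm{Ind}_B^G\mathbf{1}_B=\mathbf{1}\oplus\mathrm{St}$ gives via Frobenius reciprocity $\dim\mathrm{Fix}(\mathrm{St}(B))=1=\dim\mathrm{Fix}(\mathrm{St}(\langle\overline{C}_p\rangle))$, contradicting the dimension inequality applied to $H=B$ (which properly contains $\langle\overline{C}_p\rangle$ as its normalizer).

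The main case is (d). The unique $B$-invariant line of $\rho_+|_B$ is the $\alpha_0$-line, and $\ker\alpha_0\leq T$ has index $2$ and hence order $(p-1)/4$. Since $p\geq 7$ and $p\equiv 1\pmod 4$ give $(p-1)/4\geq 2$, there exists a prime $d$ dividing $(p-1)/4$; the unique subgroup $T_d\leq T$ of order $d$ then lies in $\ker\alpha_0$. Setting $H=\langle\overline{C}_p\rangle\cdot T_d\leq B$, the $\alpha_0$-line is $H$-fixed, so $\dim\mathrm{Fix}(\rho_+(H))\geq 1$. To confirm equality, one applies Mackey's formula to $\mathrm{Ind}_B^G\alpha_0=\rho_+\oplus\rho_-$ to compute $\dim\mathrm{Fix}((\rho_+\oplus\rho_-)(H))=2$, and then uses the Galois symmetry $\sqrt p\leftrightarrow-\sqrt p$ (which swaps $\rho_+$ and $\rho_-$ while fixing $\mathrm{Ind}_H^G\mathbf{1}_H$, whose character is rational) to conclude $\dim\mathrm{Fix}(\rho_+(H))=\dim\mathrm{Fix}(\rho_-(H))=1$. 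Hence $\dim\mathrm{Fix}(\rho(H))=k+1=\dim\mathrm{Fix}(\rho(\langle\overline{C}_p\rangle))$, and the dimension inequality fails once more. All cases are ruled out, so $n\geq p+1$. The main technical obstacle I expect is the Mackey/Frobenius bookkeeping in the last case that pins down $\dim\mathrm{Fix}(\rho_+(H))$ for the intermediate subgroup $H$; everything else is a clean enumeration.
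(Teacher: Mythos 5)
Your proof is correct, but it takes a genuinely different route from the paper's. The paper works with the coarse quantity $k=\mathrm{tr}(\rho(\overline{C}_p))+\mathrm{tr}(\rho((\overline{C}_p)^r))$ and the identity $2p\cdot\mathrm{codim}\,\mathrm{Fix}(\rho(\overline{C}_p))=(p-1)(2n-k)$: when no $(p+1)/2$-dimensional constituent occurs, $k$ is even, a divisibility count forces $\mathrm{codim}\,\mathrm{Fix}(\rho(\overline{C}_p))=p-1$, and then $n\geq(p-1)+2$ once one knows $\dim\mathrm{Fix}(\rho(\overline{C}_p))\geq 2$; in the remaining case $\rho=\rho'\oplus(\text{trivial})^k$ the contradiction is geometric, namely that the $(p-1)/2\geq 3$ fixed points of $\overline{C}_p$ lie in one orbit and would have to be distinct points of equal norm on the line $\mathrm{Fix}(\rho'(\overline{C}_p))$. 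You instead pin down the exact $\overline{C}_p$-eigenvalue content of each irreducible via the Gauss sum, use the eigenvalue condition to prune the decompositions, and then kill each surviving case with the dimension inequality for a well-chosen intermediate subgroup ($G$, $B$, or $\langle\overline{C}_p\rangle T_d$). Both arguments are valid; yours requires finer character-table input and, because it invokes the eigenvalue condition, it genuinely uses smoothness of the embedding, whereas the paper's proof of this particular proposition is smoothness-free (only the dimension inequality and the orbit/norm count enter), so it also bounds the topological variant $\hat{d}_g(G)$. One simplification for your case (d): the Mackey/Galois step you flag as the main obstacle is unnecessary, since $H\supseteq\langle\overline{C}_p\rangle$ gives $\mathrm{Fix}(\rho_+(H))\subseteq\mathrm{Fix}(\rho_+(\langle\overline{C}_p\rangle))$, and the latter is already $1$-dimensional by the character sum $\frac{1}{p}\sum_j\chi_+((\overline{C}_p)^j)=1$; combined with the $\alpha_0$-line lower bound this yields $\dim\mathrm{Fix}(\rho_+(H))=1$ immediately.
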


\begin{proof}
Suppose that there is a $\overline{\Gamma}_p$-equivariant embedding $\Sigma(p)\rightarrow\mathbb{R}^n$, with respect to an embedding $\rho:\overline{\Gamma}_p\rightarrow\mathrm{O}(n)$. Let $k=\mathrm{tr}(\rho(\overline{C}_p))+\mathrm{tr}(\rho((\overline{C}_p)^r))$. Then
\[\mathrm{dim}\mathrm{Fix}(\rho(\overline{C}_p)) =\frac{1}{p}\sum_{j=1}^p\mathrm{tr}(\rho((\overline{C}_p)^j))=\frac{1}{p}(\frac{(p-1)k}{2}+n).\]
Hence $2p\cdot\mathrm{codim}\mathrm{Fix}(\rho(\overline{C}_p))=(p-1)(2n-k)$. Also, $\mathrm{dim}\mathrm{Fix}(\rho(\overline{C}_p))\geq 2$ as before. Now we regard $\rho$ as a complex representation. So, one can write $\rho$ as a direct sum of irreducible representations. Assume that $n\leq p$.

If no summand of $\rho$ has dimension $(p+1)/2$, then $k\in\mathbb{Z}$ is an even number. So $\mathrm{codim}\mathrm{Fix}(\rho(\overline{C}_p))=p-1$. And $n\geq (p-1)+2$, which contradicts the assumption. Hence we have $\rho=\rho'\oplus\rho_1$, where $\rho'$ is a summand of dimension $(p+1)/2$, and $\rho_1$ is a direct sum of trivial representations. Then we also have such a decomposition where all the representations are regarded as real ones. Let $\mathbb{R}^n=W'\oplus W_1$ be the corresponding decomposition. Let $\xi$ be a fixed point of $\rho(\overline{C}_p)$. So $\xi=\xi'+\xi_1$ with $\xi'\in W'$ and $\xi_1\in W_1$. Then $\rho(h)(\xi)=\rho'(h)(\xi')+\xi_1$ for each $h\in\overline{\Gamma}_p$. Hence $\xi'$ is a fixed point of $\rho'(\overline{C}_p)$, and all such fixed points coming from the fixed points of $\overline{C}_p$ lie in the same orbit of the $\rho'(\overline{\Gamma}_p)$-action. On the other hand, we have
\[\mathrm{dim}\mathrm{Fix}(\rho'(\overline{C}_p)) =\frac{1}{p}\sum_{j=1}^p\mathrm{tr}(\rho'((\overline{C}_p)^j))=\frac{1}{p}(\frac{p-1}{2}+\frac{p+1}{2})=1.\]
Since there are $(p-1)/2\geq 3$ fixed points, we get a contradiction.

Hence we must have $n\geq p+1$.
\end{proof}

\begin{proof}[Proof of Theorem~\ref{thm:PCS}]
By Propositions~\ref{pro:EEofSp}, \ref{pro:4k3}, and \ref{pro:4k1}, we have the result.
\end{proof}

\begin{remark}\label{rem:icosa}
For the case $p=5$, one can have similar constructions, where $\Sigma(5)$ is just the Riemann sphere, $\mathcal{T}_5$ can be given by twenty spherical equilateral triangles with angle $2\pi/5$, and $\overline{\Gamma}_5$ is essentially the isometry group of a regular icosahedron. Clearly $d_g(G)=3$ for $(\Sigma(5),\overline{\Gamma}_5)$. So the representation $\rho$ has dimension $(5+1)/2$, and it is irreducible. However, different from the cases when $p\geq 7$, $\overline{C}_5$ has $2$ fixed points, which can be embedded in a line with the equal norm.
\end{remark}


\begin{appendix}
\section{A concrete real representation of $\mathrm{PSL}(2,\mathbb{Z}/p\mathbb{Z})$}\label{app:RR}
As mentioned in the proof of Proposition~\ref{pro:EEofSp}, in this section, we will prove that the complex representation $\rho_{\mathbb{C}}$ is conjugate to a real representation $\rho_0$. By using a little representation theory, it is not hard to see that $\rho_{\mathbb{C}}$ is irreducible, since $p\geq 7$. Actually, for any given $\overline{S}_p\in\overline{\Gamma}_p$, one can obtain $\rho_{\mathbb{C}}(\overline{S}_p)$ by rewriting each of
\[\overline{S}_p([1:0])=[a:c],\, \overline{S}_p([s:1])=[as+b:cs+d],\, 0\leq s\leq p-1,\]
as one of $[r^q:0]$, $[sr^q:r^q]$, $0\leq s\leq p-1$, for some $0\leq q\leq (p-3)/2$, and one can then get $\mathrm{tr}(\rho_{\mathbb{C}}(\overline{S}_p))$ and verify that $\rho_{\mathbb{C}}$ lies in the character table of $\mathrm{PSL}(2,\mathbb{Z}/p\mathbb{Z})$. Then one can check that the second Frobenius-Schur indicator of $\rho_{\mathbb{C}}$ satisfies
\[\nu_2(\rho_{\mathbb{C}})=\frac{1}{|G|}\sum_{h\in G}\mathrm{tr}(\rho_{\mathbb{C}}(h^2))=1,\]
where $G=\mathrm{PSL}(2,\mathbb{Z}/p\mathbb{Z})$. This gives a way to show that $\rho_{\mathbb{C}}$ is real. See \cite[$\S 3.5$]{FH}. Below we give another way, and we will write down the $\rho_0$ explicitly.

Let $(a_{i,j})$ and $(c_{i,j})$ be the matrices of $\rho_{\mathbb{C}}(\overline{A}_p)$ and $\rho_{\mathbb{C}}(\overline{C}_p)$, respectively, and let $(\delta_{i,j})$ be the identity matrix, where $i$ and $j$ lie in $\{\infty,0,1,\ldots,p-1\}$. We have
\[a_{i,\infty}=\delta_{i,0},\, a_{i,0}=\delta_{i,\infty},\, a_{i,s}=\omega^{k_s}\delta_{i,p-s^\ast},\, 1\leq s\leq p-1,\]
where $s^\ast$ and $k_s$ satisfy $ss^\ast\equiv 1\pmod{p}$, $r^{k_s}\equiv s\pmod{p}$, and $1\leq s^\ast, k_s\leq p-1$, and $\omega=\mathrm{e}^{4\pi \mathrm{i}/(p-1)}$. It is easy to see that $(a_{i,j})^t=\overline{(a_{i,j})}$. Similarly, we have
\[c_{i,\infty}=\delta_{i,\infty},\, c_{i,p-1}=\delta_{i,0},\, c_{i,s}=\delta_{i,s+1},\, 0\leq s\leq p-2.\]
Then, since $\rho_{\mathbb{C}}(\overline{\Gamma}_p)$ is generated by $\rho_{\mathbb{C}}(\overline{A}_p)$ and $\rho_{\mathbb{C}}(\overline{C}_p)$, it suffices to find a matrix $(z_{i,j})$ so that $(z_{i,j})^{-1}(a_{i,j})(z_{i,j})$ and $(z_{i,j})^{-1}(c_{i,j})(z_{i,j})$ have real entries.

Let $\varepsilon=\mathrm{e}^{2\pi \mathrm{i}/p}$, and let $(y_{i,j})$ be the matrix defined by
\[y_{\infty,\infty}=1,\, y_{i,\infty}=y_{\infty,j}=0,\, y_{i,j}=\frac{\varepsilon^{ij}}{\sqrt{p}},\, 0\leq i,j\leq p-1.\]
Then $(y_{i,j})^t=(y_{i,j})$ and $(y_{i,j})^{-1}=\overline{(y_{i,j})}$. Let $(A_{i,j})=(y_{i,j})^{-1}(a_{i,j})(y_{i,j})$. Then
\begin{align*}
A_{\infty,\infty}&=\sum_{l,s}\overline{y_{\infty,l}}\cdot a_{l,s}\cdot y_{s,\infty}=a_{\infty,\infty}=0,\\
A_{i,\infty}&=\sum_{l,s}\overline{y_{i,l}}\cdot a_{l,s}\cdot y_{s,\infty} =\sum_{l}\overline{y_{i,l}}\cdot a_{l,\infty}=\overline{y_{i,0}}=\frac{1}{\sqrt{p}},\\
A_{i,j}&=\sum_{l,s}\overline{y_{i,l}}\cdot a_{l,s}\cdot y_{s,j} =\sum_{l,s\neq\infty}\overline{y_{i,l}}\cdot a_{l,s}\cdot y_{s,j}
=\sum_{l,s\neq\infty,0}\overline{y_{i,l}}\cdot a_{l,s}\cdot y_{s,j}\\
&=\frac{1}{p}\sum_{l,s\neq\infty,0}\varepsilon^{-il}\cdot \omega^{k_s}\delta_{l,p-s^\ast}\cdot \varepsilon^{sj}=\frac{1}{p}\sum_{s=1}^{p-1}\omega^{k_s}\varepsilon^{is^\ast+sj},
\end{align*}
where $0\leq i,j\leq p-1$, and $(A_{i,j})^t=\overline{(A_{i,j})}$. Let $(C_{i,j})=(y_{i,j})^{-1}(c_{i,j})(y_{i,j})$. Then
\begin{align*}
C_{\infty,\infty}&=c_{\infty,\infty}=1,\, C_{i,\infty}=\overline{y_{i,\infty}}=0,\, C_{\infty,j}=y_{\infty,j}=0,\\
C_{i,j}&=\sum_{l,s}\overline{y_{i,l}}\cdot c_{l,s}\cdot y_{s,j} =\sum_{l,s\neq\infty}\overline{y_{i,l}}\cdot c_{l,s}\cdot y_{s,j}\\
&=\frac{1}{p}\sum_{l,s\neq\infty}\varepsilon^{-il+sj}c_{l,s} =\frac{1}{p}\sum_{s=0}^{p-1}\varepsilon^{(j-i)s-i}=\varepsilon^{-i}\delta_{i,j},
\end{align*}
where $0\leq i,j\leq p-1$, and clearly $(C_{i,j})$ is a diagonal matrix.

\begin{lemma}\label{lem:Aij}
The entries $A_{i,j}$, $0\leq i,j\leq p-1$ satisfy the following properties

(1) $A_{0,0}=0$ and $|A_{1,0}|=|A_{0,1}|=1/\sqrt{p}$;

(2) $A_{i,j}=\omega^{k_i}A_{1,n}$ if $i\neq 0$ and $A_{i,j}=\omega^{-k_j}A_{n,1}$ if $j\neq 0$, where $ij\equiv n\pmod{p}$ and $0\leq n\leq p-1$, and moreover $A_{i,0}=A_{p-i,0}$, $A_{i,j}=A_{p-i,p-j}$ for $i,j\neq 0$.
\end{lemma}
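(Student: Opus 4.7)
The plan is to handle part (2) first by a direct substitution in the defining sum
\[A_{i,j} = \frac{1}{p}\sum_{s=1}^{p-1}\omega^{k_s}\varepsilon^{is^\ast + sj},\]
and then deduce (1) by combining (2) with the unitarity of $(A_{i,j})$. Throughout, the key bookkeeping is that congruences among the discrete logarithms $k_s$ hold modulo $p-1$, while congruences among $s$ and $s^\ast$ hold modulo $p$, and $\omega^{p-1} = e^{4\pi i} = 1$ so powers of $\omega$ can be manipulated modulo $p-1$ freely.

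For the first formula of (2), fix $i\neq 0$ and substitute $s = ti \pmod{p}$; as $s$ ranges over $\{1,\ldots,p-1\}$, so does $t$. From $r^{k_s} \equiv s \equiv r^{k_t}r^{k_i} \pmod p$ we get $k_s \equiv k_t + k_i \pmod{p-1}$, hence $\omega^{k_s} = \omega^{k_t}\omega^{k_i}$. Also $s^\ast \equiv t^\ast i^\ast \pmod p$, so $is^\ast \equiv t^\ast \pmod p$, while $sj \equiv tij \equiv tn \pmod p$. Plugging in yields $A_{i,j} = \omega^{k_i}\cdot\frac{1}{p}\sum_t \omega^{k_t}\varepsilon^{t^\ast + tn} = \omega^{k_i} A_{1,n}$. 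The second formula is analogous: for $j\neq 0$, substitute $s = tj^\ast$ to get $k_s \equiv k_t - k_j$, $s^\ast \equiv t^\ast j$, $is^\ast \equiv nt^\ast$, and $sj \equiv t \pmod p$, whence $A_{i,j} = \omega^{-k_j}A_{n,1}$. For the sign symmetries, substitute $s \mapsto p - s$: since $r^{(p-1)/2}\equiv -1\pmod p$, one has $k_{p-s}\equiv k_s + (p-1)/2 \pmod{p-1}$, so $\omega^{k_{p-s}} = \omega^{k_s}$ because $\omega^{(p-1)/2}=1$; and $(p-s)^\ast \equiv p - s^\ast$. This flips the signs of both $is^\ast$ and $sj$, which is absorbed by simultaneously sending $i\to p-i$ and $j \to p-j$, proving $A_{i,j} = A_{p-i,p-j}$ for $i,j\neq 0$, and $A_{i,0} = A_{p-i,0}$ as the special case $j=0$.

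For (1), the identity $A_{0,0} = \frac{1}{p}\sum_{s=1}^{p-1}\omega^{k_s}$ equals $\frac{1}{p}\sum_{k=1}^{p-1}\omega^k$ because $s\mapsto k_s$ is a bijection on $\{1,\ldots,p-1\}$; since $\omega^{p-1}=1$ and $\sum_{k=0}^{p-2}\omega^k = 0$ (as $\omega$ is a primitive $((p-1)/2)$-th root of unity and the sum is $2\sum_{k=0}^{(p-3)/2}\omega^k$), this sum vanishes, giving $A_{0,0}=0$. To compute $|A_{0,1}|$, note $(A_{i,j})$ is unitary, being the conjugate of the orthogonal matrix $(a_{i,j})$ by the unitary $(y_{i,j})$. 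By the formula just proved in (2), $A_{0,j} = \omega^{-k_j}A_{0,1}$ for every $j\neq 0$, so $|A_{0,j}|^2 = |A_{0,1}|^2$. The unitarity of the $0$-th row then reads
\[1 = |A_{0,\infty}|^2 + |A_{0,0}|^2 + \sum_{j=1}^{p-1}|A_{0,j}|^2 = \tfrac{1}{p} + 0 + (p-1)|A_{0,1}|^2,\]
yielding $|A_{0,1}| = 1/\sqrt{p}$. Finally, since $(y_{i,j})$ is symmetric with $(y_{i,j})^{-1} = \overline{(y_{i,j})}$ and $(a_{i,j})^t = \overline{(a_{i,j})}$, a direct check gives $(A_{i,j})^t = \overline{(A_{i,j})}$, so $A_{1,0} = \overline{A_{0,1}}$ and $|A_{1,0}| = 1/\sqrt{p}$.

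None of the steps is conceptually deep; the only thing that needs care is the interplay between the modulus $p-1$ (for exponents of $\omega$ and discrete logarithms) and the modulus $p$ (for $s$, $s^\ast$, $i$, $j$). I expect that to be the main source of potential slips in the write-up, but the substitution trick $s\mapsto ti$ (respectively $s\mapsto tj^\ast$) packages everything cleanly once one observes that the factors $i^\ast$ and $i$ cancel in the expression $is^\ast = it^\ast i^\ast = t^\ast$.
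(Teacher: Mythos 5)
Your argument is correct; it differs from the paper's mainly in part (1) and in how part (2) is organized. For (2) the paper proves only the identity $A_{i,j}=\omega^{k_i}A_{1,n}$, by writing $A_{i,j}=\overline{A_{j,i}}$ and substituting $l\equiv si$, and then obtains $A_{i,j}=\omega^{-k_j}A_{n,1}$, $A_{i,0}=A_{p-i,0}$ and $A_{i,j}=A_{p-i,p-j}$ as formal consequences of $\omega^{k_{p-i}}=\omega^{k_i}$ and $(A_{i,j})^t=\overline{(A_{i,j})}$; your direct substitutions $s=ti$, $s=tj^\ast$ and $s\mapsto p-s$ in the defining sum prove the same statements with the same modular bookkeeping, so there the difference is only organizational. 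The genuine divergence is in (1): the paper evaluates $|A_{0,1}|^2$ by expanding the product of the two Gauss-type sums into a double sum and using $\sum_{l=1}^{p-1}\varepsilon^{(r^m-1)r^l}=-1$ for $m\not\equiv 0$, whereas you deduce it from the unitarity of $(A_{i,j})$ together with the entries $A_{0,\infty}=1/\sqrt{p}$, $A_{0,0}=0$ and the relation $|A_{0,j}|=|A_{0,1}|$ for $j\neq 0$ supplied by (2), so that the norm of row $0$ gives $1=\tfrac1p+(p-1)|A_{0,1}|^2$. This is a legitimate and slicker route (not circular, since you prove (2) without (1)), at the price of one observation the paper never states, namely that $(A_{i,j})$ is unitary; that is true, but your justification has a terminological slip: as a complex matrix $(a_{i,j})$ is unitary, not orthogonal (it is a generalized permutation matrix with unit-modulus entries, equivalently Hermitian with square the identity, while $M^tM$ has diagonal entries $\omega^{2k_s}\neq 1$ in general), and conjugating this unitary matrix by the unitary $(y_{i,j})$ yields the unitarity of $(A_{i,j})$. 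With that wording corrected, and noting that $A_{0,\infty}=1/\sqrt{p}$ and $(A_{i,j})^t=\overline{(A_{i,j})}$ (the latter used for $A_{1,0}=\overline{A_{0,1}}$) are indeed recorded just before the lemma, your proof is complete.
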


\begin{proof}
(1) Since $\omega=\mathrm{e}^{4\pi \mathrm{i}/(p-1)}$ and $\varepsilon=\mathrm{e}^{2\pi i/p}$, $A_{0,0}=(\sum_{s=1}^{p-1}\omega^{k_s})/p=0$, and
\begin{align*}
|A_{1,0}|^2&=|A_{0,1}|^2\\
&=\frac{1}{p^2}(\sum_{s=1}^{p-1}\omega^{k_s}\varepsilon^s) (\sum_{l=1}^{p-1}\omega^{-k_l}\varepsilon^{-l})
=\frac{1}{p^2}(\sum_{s=1}^{p-1}\omega^{s}\varepsilon^{r^s}) (\sum_{l=1}^{p-1}\omega^{-l}\varepsilon^{-r^{l}})\\
&=\frac{1}{p^2}\sum_{s=1}^{p-1}\sum_{l=1}^{p-1}\omega^{s-l}\varepsilon^{r^s-r^l} =\frac{1}{p^2}\sum_{m=1}^{p-1}\sum_{l=1}^{p-1}\omega^{m}\varepsilon^{r^{m+l}-r^l}\\
&=\frac{1}{p^2}(\sum_{m=1}^{p-2}\sum_{l=1}^{p-1}\omega^{m}\varepsilon^{(r^m-1)r^l}+ \sum_{l=1}^{p-1}\varepsilon^{(r^{p-1}-1)r^l})\\
&=\frac{1}{p^2}(-\sum_{m=1}^{p-2}\omega^{m}+p-1)=\frac{1}{p}.
\end{align*}

(2) For $i\neq 0$, since $p-i\equiv r^{(p-1)/2+k_i}\pmod{p}$, we have $\omega^{k_{p-i}}=\omega^{k_i}$. Then we see that $A_{i,j}=\omega^{k_i}A_{1,n}$ implies $A_{i,j}=\omega^{k_{p-i}}A_{1,n}=A_{p-i,p-j}$, $j\neq 0$, $A_{i,0}=A_{p-i,0}$, and $A_{i,j}=\overline{A_{j,i}}=\omega^{-k_j}\overline{A_{1,n}}=\omega^{-k_j}A_{n,1}$ for $j\neq 0$.

To finish the proof, for $i\neq 0$, we have
\begin{align*}
A_{i,j}&=\overline{A_{j,i}}\\
&=\frac{1}{p}\sum_{s=1}^{p-1}\omega^{-k_s}\varepsilon^{-js^\ast-si} =\frac{\omega^{k_i}}{p}\sum_{s=1}^{p-1}\omega^{-k_i-k_s}\varepsilon^{-ji(si)^\ast-si}\\
&=\frac{\omega^{k_i}}{p}\sum_{l=1}^{p-1}\omega^{-k_l}\varepsilon^{-nl^\ast-l}=\omega^{k_i}A_{1,n},
\end{align*}
where we have used the relation $\omega^{k_l}=\omega^{k_s+k_i}$ for $l\equiv si\pmod{p}$.
\end{proof}

Now choose $\lambda$ and $\mu$ such that $\lambda^2=\sqrt{p}A_{1,0}$ and $\mu^2=\omega$. Let $q=(p-1)/2$ and let $\lambda_j=\lambda\mu^{k_j}$ for $1\leq j\leq q$. Then $|\lambda_j|=1$. Let $(x_{i,j})$ be the matrix defined by
\begin{align*}
x_{i,\infty}&=\frac{\delta_{i,\infty}+\delta_{i,0}}{\sqrt{2}},\, x_{i,0}=\frac{\delta_{i,\infty}-\delta_{i,0}}{\mathrm{i}\sqrt{2}},\\
x_{i,j}&=\frac{(\delta_{i,j}+\delta_{i,p-j})\lambda_j}{\sqrt{2}},\, x_{i,p-j}=\frac{(\delta_{i,j}-\delta_{i,p-j})\lambda_j}{\mathrm{i}\sqrt{2}},
\end{align*}
where $i$ lies in $\{\infty,0,1,\ldots,p-1\}$ and $1\leq j\leq q$. Then $(x_{i,j})^{-1}=\overline{(x_{i,j})^t}$.

Below we verify that $(x_{i,j})^{-1}(A_{i,j})(x_{i,j})$ and $(x_{i,j})^{-1}(C_{i,j})(x_{i,j})$ lie in $\mathrm{O}(p+1)$. We first compute the $(i,j)$-entries of $(x_{i,j})^{-1}(A_{i,j})(x_{i,j})$ with $j=\infty$ and $0\leq j\leq i$ and show that they are real numbers.

For $(i,j)=(\infty,\infty),(0,\infty),(0,0)$, we have
\begin{align*}
\sum_{l,s}\overline{x_{l,\infty}}\cdot A_{l,s}\cdot x_{s,\infty}&= \frac{1}{2}\sum_{l,s}(\delta_{l,\infty}+\delta_{l,0})(\delta_{s,\infty}+\delta_{s,0})A_{l,s} =\frac{1}{\sqrt{p}},\\
\sum_{l,s}\overline{x_{l,0}}\cdot A_{l,s}\cdot x_{s,\infty}&= -\frac{1}{2\mathrm{i}}\sum_{l,s}(\delta_{l,\infty}-\delta_{l,0})(\delta_{s,\infty}+\delta_{s,0})A_{l,s} =0,\\
\sum_{l,s}\overline{x_{l,0}}\cdot A_{l,s}\cdot x_{s,0}&= \frac{1}{2}\sum_{l,s}(\delta_{l,\infty}-\delta_{l,0})(\delta_{s,\infty}-\delta_{s,0})A_{l,s} =-\frac{1}{\sqrt{p}}.
\end{align*}

The $(j,\infty)$-entries and $(j,0)$-entries, where $1\leq j\leq q$, are given by
\begin{align*}
\sum_{l,s}\overline{x_{l,j}}\cdot A_{l,s}\cdot x_{s,\infty}&= \frac{\overline{\lambda_j}}{2} \sum_{l,s}(\delta_{l,j}+\delta_{l,p-j})(\delta_{s,\infty}+\delta_{s,0})A_{l,s} =\overline{\lambda_j}(\frac{1}{\sqrt{p}}+A_{j,0}),\\
\sum_{l,s}\overline{x_{l,j}}\cdot A_{l,s}\cdot x_{s,0}&= \frac{\overline{\lambda_j}}{2\mathrm{i}} \sum_{l,s}(\delta_{l,j}+\delta_{l,p-j})(\delta_{s,\infty}-\delta_{s,0})A_{l,s} =\frac{\overline{\lambda_j}}{\mathrm{i}}(\frac{1}{\sqrt{p}}-A_{j,0}).
\end{align*}
Here we have used the property $A_{j,0}=A_{p-j,0}$ in Lemma~\ref{lem:Aij}. Since $A_{1,0}=\lambda^2/\sqrt{p}$ and $\lambda^2\omega^{k_j}=\lambda^2\mu^{2k_j}=\lambda_j^2$, by Lemma~\ref{lem:Aij} we obtain $A_{j,0}=\omega^{k_j}A_{1,0}=\lambda_j^2/\sqrt{p}$, and so the above $2q$ entries are real. Similarly, by using $A_{j,0}=A_{p-j,0}$, one can get the following $(p-j,\infty)$-entries and $(p-j,0)$-entries, where $1\leq j\leq q$,
\begin{align*}
\sum_{l,s}\overline{x_{l,p-j}}\cdot A_{l,s}\cdot x_{s,\infty}&= -\frac{\overline{\lambda_j}}{2\mathrm{i}} \sum_{l,s}(\delta_{l,j}-\delta_{l,p-j})(\delta_{s,\infty}+\delta_{s,0})A_{l,s}=0,\\
\sum_{l,s}\overline{x_{l,p-j}}\cdot A_{l,s}\cdot x_{s,0}&= \frac{\overline{\lambda_j}}{2} \sum_{l,s}(\delta_{l,j}-\delta_{l,p-j})(\delta_{s,\infty}-\delta_{s,0})A_{l,s}=0.
\end{align*}

Finally, we have the following remaining entries, where $1\leq i,j\leq q$,
\begin{align*}
\sum_{l,s}\overline{x_{l,i}}\cdot A_{l,s}\cdot x_{s,j}&= \frac{\overline{\lambda_i}\lambda_j}{2} \sum_{l,s}(\delta_{l,i}+\delta_{l,p-i})(\delta_{s,j}+\delta_{s,p-j})A_{l,s}\\ &=\overline{\lambda_i}\lambda_j(A_{i,j}+A_{i,p-j})=\mu^{k_j-k_i}(A_{i,j}+A_{i,p-j}),\\
\sum_{l,s}\overline{x_{l,p-i}}\cdot A_{l,s}\cdot x_{s,j}&= -\frac{\overline{\lambda_i}\lambda_j}{2\mathrm{i}} \sum_{l,s}(\delta_{l,i}-\delta_{l,p-i})(\delta_{s,j}+\delta_{s,p-j})A_{l,s}=0,\\
\sum_{l,s}\overline{x_{l,p-i}}\cdot A_{l,s}\cdot x_{s,p-j}&= \frac{\overline{\lambda_i}\lambda_j}{2} \sum_{l,s}(\delta_{l,i}-\delta_{l,p-i})(\delta_{s,j}-\delta_{s,p-j})A_{l,s}\\ &=\overline{\lambda_i}\lambda_j(A_{i,j}-A_{i,p-j})=\mu^{k_j-k_i}(A_{i,j}-A_{i,p-j}).
\end{align*}
Here we have applied $A_{i,j}=A_{p-i,p-j}$ and $A_{i,p-j}=A_{p-i,j}$ in Lemma~\ref{lem:Aij}. And by the lemma, we also have $\overline{A_{i,j}}=A_{j,i}=\omega^{k_j-k_i}A_{i,j}$. So $\mu^{k_j-k_i}A_{i,j}$ is real. Similarly, $\mu^{k_j-k_i}A_{i,p-j}$ is real. Hence the above entries are all real numbers.

Then, since $(A_{i,j})^t=\overline{(A_{i,j})}$, $(x_{i,j})^{-1}=\overline{(x_{i,j})^t}$, we see that $(x_{i,j})^{-1}(A_{i,j})(x_{i,j})$ is a real symmetric matrix. So it lies in $\mathrm{O}(p+1)$. Similarly, one can compute all the $(i,j)$-entries of $(x_{i,j})^{-1}(C_{i,j})(x_{i,j})$, denoted by $\theta_{i,j}$, and check that
\begin{align*}
\theta_{s,\infty}&=\theta_{\infty,s}=\delta_{s,\infty},\, \theta_{s,0}=\theta_{0,s}=\delta_{s,0},\\
\theta_{i,j}&=\theta_{p-i,p-j}=\frac{(\varepsilon^{i}+\varepsilon^{-i})\delta_{i,j}}{2},\, \theta_{p-i,j}=-\theta_{i,p-j}=\frac{(\varepsilon^{i}-\varepsilon^{-i})\delta_{i,j}}{2\mathrm{i}},
\end{align*}
where $s$ lies in $\{\infty,0,1,\ldots,p-1\}$ and $1\leq i,j\leq q$. It is easy to see that $(\theta_{i,j})$ lies in $\mathrm{O}(p+1)$ as well. Hence one can choose $(z_{i,j})=(y_{i,j})(x_{i,j})$.

\begin{remark}
For the case $p=5$, one can have similar computations. However, we note that the representation $\rho_{\mathbb{C}}$ in this case is already a real one, because $\omega=-1$, and the corresponding matrices $(z_{i,j})^{-1}(a_{i,j})(z_{i,j})$ and $(z_{i,j})^{-1}(c_{i,j})(z_{i,j})$ will show directly how it decomposes into two irreducible ones of dimension $3$.
\end{remark}

Now with the concrete $(z_{i,j})$ and $\rho_0$, one can have a concrete $e_0:\Sigma(p)\rightarrow\mathbb{R}^{p+1}$, as mentioned in Remark~\ref{rem:explic}. Corresponding to the basis for $\mathbb{C}^{p+1}$ given by
\[(\eta_\infty',\eta_0',\eta_1',\ldots,\eta_{p-1}')= (\eta_\infty,\eta_0,\eta_1,\ldots,\eta_{p-1})(z_{i,j}),\]
we have a decomposition $\mathbb{C}^{p+1}=W\oplus W'$, where $W$ and $W'$ are the real part and imaginary part, respectively, and $\rho_0(\overline{\Gamma}_p)$ acts on each of them. Let $\hat{e}:|K|\rightarrow\mathbb{C}^{p+1}$ be the equivariant embedding in Proposition~\ref{pro:EEofSp}. Then, the projection $\mathbb{C}^{p+1}\rightarrow W$ induces a $\overline{\Gamma}_p$-equivariant map $\hat{e}_0:|K|\rightarrow W$ with respect to $\rho_0$. By checking those possible intersections of the triangles in $\hat{e}_0(|K|)$ with the triangle spanned by
\[\hat{e}_0([1:0]),\, \hat{e}_0([0:1]),\, \hat{e}_0([1:1]),\]
it is not hard to see that $\hat{e}_0$ is actually an embedding. Then we can also modify it to get a smooth embedding, as in the proof of Proposition~\ref{pro:EEofSp}.

\begin{proposition}
The map $\hat{e}_0:|K|\rightarrow W$ gives a $\overline{\Gamma}_p$-equivariant embedding, with respect to $\rho_0$. And there exists a smooth $\overline{\Gamma}_p$-equivariant embedding $e_0:\Sigma(p)\rightarrow W$, with respect to $\rho_0$, so that $e_0(\Sigma(p))$ lies near $\hat{e}_0(|K|)$.
\end{proposition}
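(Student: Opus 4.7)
The plan is to leverage the $\rho_0$-invariant decomposition $\mathbb{C}^{p+1}=W\oplus W'$ developed in the appendix, and check in turn that $\hat{e}_0$ is equivariant, that it remains an embedding after projection, and that the local surgery of Proposition~\ref{pro:EEofSp} smooths it. Equivariance is immediate: since the matrices of $\rho_{\mathbb{C}}(\overline{A}_p)$ and $\rho_{\mathbb{C}}(\overline{C}_p)$ become real in the $\eta'_j$-basis, both $W$ and $W'$ are $\rho_{\mathbb{C}}$-invariant and the induced actions on each are exactly $\rho_0$; thus the projection $\pi_W:\mathbb{C}^{p+1}\to W$ intertwines $\rho_{\mathbb{C}}$ with $\rho_0$, and $\hat{e}_0=\pi_W\circ\hat{e}$ is $\overline{\Gamma}_p$-equivariant with respect to $\rho_0$.

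To show that $\hat{e}_0$ is an embedding, I would first note that it is linear on every $2$-simplex of $K$. Injectivity on the vertex set $V_p$ can be verified directly from the explicit form of $(z_{i,j})$, using the unimodular weights $\lambda_j=\lambda\mu^{k_j}$ to separate the vertex images, and then each individual triangle $\Delta$ embeds linearly. The key step is to verify that distinct triangles of $\hat{e}_0(|K|)$ meet only in common faces: by $\overline{\Gamma}_p$-equivariance of $\hat{e}_0$ and transitivity of $\overline{\Gamma}_p$ on the $2$-simplices of $K$, it is enough to fix the reference triangle $\Delta_0$ with vertices $[1:0]$, $[0:1]$, $[1:1]$ and compare $\hat{e}_0(\Delta_0)$ with $\hat{e}_0(\Delta)$ for every other $2$-simplex $\Delta$; the cyclic stabiliser of $\Delta_0$ further cuts the number of genuinely distinct cases down to a short finite list. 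I expect this step to be the main obstacle, since a priori the real projection $\pi_W$ could identify points lying in different triangles of $\hat{e}(|K|)$, and only the specific form of $(z_{i,j})$ produced in the appendix prevents this; even after the group-theoretic reduction one still has to rule out coincidental overlaps with triangles that are far from $\Delta_0$ in $K$ but close to it in $W$ after projection, which will require coordinate calculations using the explicit values of $\lambda$, $\mu$, and $\omega$.

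Once $\hat{e}_0$ is known to be a $\overline{\Gamma}_p$-equivariant piecewise-linear embedding, the smoothing procedure used for $\hat{e}$ in the proof of Proposition~\ref{pro:EEofSp} applies without change. By Lemma~\ref{lem:lift} and Remark~\ref{rem:lift}, it suffices to modify $\hat{e}_0$ near the singular points of $\hat{e}_0(|K^1|)$, which are midpoints of edges stabilised by conjugates of $\overline{A}_p$ (order $2$) and vertices of valence $p$ stabilised by conjugates of $\overline{C}_p$ (order $p\geq 7$). Because $\rho_0$ inherits the eigenvalue condition from $\rho_{\mathbb{C}}$, at each such point there is a suitable $\rho_0(h)$-invariant $2$-plane — either a $-1$-eigenplane for the order-$2$ case or a $2\pi/p$-rotation plane for the order-$p$ case — and replacing the non-smooth part of the image by a smooth disk in this plane together with an annular interpolation, exactly as in Cases~1 and~2 of the proof of Theorem~\ref{thm:tri}, yields the required smooth equivariant embedding $e_0:\Sigma(p)\to W$, which by construction stays in an arbitrarily small neighbourhood of $\hat{e}_0(|K|)$.
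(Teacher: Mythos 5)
Your proposal follows essentially the same route as the paper: project $\hat{e}$ to the real summand $W$ of the $\rho_0$-invariant decomposition coming from $(z_{i,j})$, get equivariance for free, verify the embedding property by using equivariance and transitivity on the $2$-simplices to reduce the intersection check to the reference triangle on $[1:0]$, $[0:1]$, $[1:1]$, and then smooth near the order-$2$ edge midpoints and order-$p$ vertices exactly as in Proposition~\ref{pro:EEofSp}. The paper itself only sketches this and leaves the coordinate verification of the intersection check to the reader, which is precisely the step you correctly flag as the main remaining work.
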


We leave the details of the proof to the readers, and note that $(\hat{e}_0(|K|),\rho_0(\overline{\Gamma}_p))$ can be viewed as a generalization of the $\overline{\Gamma}_5$-action on a regular icosahedron.
\end{appendix}


\bibliographystyle{amsalpha}

\end{document}